\documentclass[english, 12pt]{article}
\usepackage[T1]{fontenc}
\usepackage{geometry}
\usepackage[latin9]{inputenc}
\usepackage{amsthm}
\usepackage{amsmath}
\usepackage{amssymb}
\usepackage{authblk}
\usepackage[margin=2cm, labelfont = bf]{caption}
\usepackage{setspace}
\usepackage{esint}
\usepackage{enumerate}
\usepackage{enumitem}
\usepackage{etoolbox}
\usepackage{url}
\usepackage{graphicx, epstopdf}
\usepackage[english]{babel}
\usepackage{xcolor}
\usepackage{multicol}
\usepackage{soul}
\usepackage{tikz}
\usepackage{hyperref}
\usepackage{authblk}
\usetikzlibrary{automata,arrows,positioning,calc}

\numberwithin{equation}{section}

\makeatletter
\theoremstyle{plain}
\newtheorem{thm}{\protect\theoremname}[section]
\ifx\proof\undefined
\newenvironment{proof}[1][\protect\proofname]{\par
\normalfont\topsep6\p@\@plus6\p@\relax
\trivlist
\itemindent\parindent
\item[\hskip\labelsep
\scshape
#1]\ignorespaces
}{%
\endtrivlist\@endpefalse
}
\providecommand{\proofname}{Proof}
\fi
\theoremstyle{plain}
\newtheorem{lem}[thm]{\protect\lemmaname}
\theoremstyle{plain}
\newtheorem{prop}[thm]{\protect\propositionname}
\theoremstyle{plain}

\theoremstyle{definition}
\newtheorem{defn}[thm]{\protect\definitionname}
\theoremstyle{remark}
\newtheorem{rem}[thm]{\protect\remarkname}
\theoremstyle{plain}
\newtheorem{cor}[thm]{\protect\corollaryname}
\theoremstyle{definition}
\newtheorem{example}[thm]{\protect\examplename}
\newtheorem{assumption}[thm]{\protect\assumptionname}
\theoremstyle{plain}
\numberwithin{figure}{section}

\makeatother

\usepackage{babel}
\providecommand{\conjecturename}{Conjecture}
\providecommand{\corollaryname}{Corollary}
\providecommand{\definitionname}{Definition}
\providecommand{\examplename}{Example}
\providecommand{\lemmaname}{Lemma}
\providecommand{\propositionname}{Proposition}
\providecommand{\remarkname}{Remark}
\providecommand{\theoremname}{Theorem}
\providecommand{\assumptionname}{Assumption}

\newcommand{\R}{\ensuremath{\mathbb{R}}}

\usepackage{mathrsfs}

\begin{document}

%%%%%%%%%%%%%%%%%%%%%%%%%%%%%%%%%%%%%%%%%%%%%%%%%%%%%%%%%%%%%%%%
%%%%%%%%%%%%%%%%%%%%%%%%%%%%%%%%%%%%%%%%%%%%%%%%%%%%%%%%%%%%%%%%
%					TITLE PAGE
%%%%%%%%%%%%%%%%%%%%%%%%%%%%%%%%%%%%%%%%%%%%%%%%%%%%%%%%%%%%%%%%
%%%%%%%%%%%%%%%%%%%%%%%%%%%%%%%%%%%%%%%%%%%%%%%%%%%%%%%%%%%%%%%%

\title{Volterra clocks and their pure-jump limits: \\hitting times of curved boundaries}

\author[1]{Eduardo Abi Jaber}
\author[1]{Elie Attal}
\author[2]{Andreas S\o jmark}
\affil[1]{CMAP, \'Ecole Polytechnique}
\affil[2]{Department of Statistics, LSE}

\maketitle

\begin{abstract}
		We introduce a class of continuous Volterra processes, called Volterra clocks, and study their singular limit as the memory kernel collapses to a Dirac mass at zero. The dynamics are parametrised by a function $f$ acting as a nonlinear time-change, generalising the Volterra square-root process and recovering it when $f$ is affine. In the singular limit, the continuous Volterra clock converges weakly to a pure-jump process given by first passage times of a Brownian motion to curved boundaries, including affine and square-root boundaries when $f$ is, respectively, affine or quadratic. Outside the affine setting, characteristic function methods are no longer available, and we instead identify the limit directly from the dynamics. We do this through a topological framework adapted to the time-change structure which involves Skorokhod's $M_1$ topology and a decorated notion of convergence. Our analysis unifies several regimes of interest for general Volterra clocks, including large-time asymptotics, fast mean reversion, and hyper-roughness. In particular, this subsumes and extends existing results in the affine setting.
\end{abstract}

\section{Introduction}

Limit theorems for continuous stochastic models typically involve continuous limits. This is true even if the analysis is often cast on the Skorokhod space $D[0,T]$, reflecting the fact that the continuous functions form a closed subspace when $D[0,T]$ is equipped with the traditional $J_1$ topology. Discontinuous limits of continuous processes may, however, arise in singular regimes, as will be the case in the present work.

The singular scaling limits we obtain are of an intriguing nature: for a general class of continuous stochastic Volterra processes, we show that, as the memory kernels degenerate into a Dirac mass, the processes admit a weak limit which falls within a classical class of pure-jump processes given by the first passage times of a Brownian motion with respect to curved boundaries. Specifically, these are of the form
\begin{equation}
\label{eq:hitting_time}
 \tau_t := \inf\{s \geq 0\,:\,W_s < b(s) - G(t) \}\,, \quad t \geq 0\,,
\end{equation}
for a Brownian motion $W$, where $b$ and $G$ are nonnegative, non-decreasing functions.

For fixed $t > 0\,$, the stopping time $\tau_t$ in \eqref{eq:hitting_time} belongs to a classical family of Brownian first passage times, whose study has been the subject of a rich literature dating back to Bachelier \cite{bachelierTheorieSpeculation1900} and Schr\"odinger \cite{schrodingerZurTheorieFall1915} in the case of an affine boundary $b(s) = b_0 + b_1 s\,$. The latter leads to the Inverse Gaussian distribution. The square-root boundary $b(s) = b_0 + b_1 \sqrt{s}$ also benefits from explicit expressions, as studied in \cite{breimanFirstExitTimes1967a, sheppFirstPassageProblem1967a, novikovStoppingTimesWiener1971}. Beyond these two cases, the distribution of the first passage time to a curved boundary is generally not explicit. Thus, a substantial part of the literature has been devoted to approximation methods. These include tangent approximation \cite{strassenAlmostSureBehavior1967a, danielsMaximumSizeClosed1974a, lercheBoundaryCrossingBrownian1986, ferebeeTangentApproximationOnesided1982a} and the use of integral equations \cite{fortetFonctionsAleatoiresType1943, durbinFirstPassageDensityContinuous1985a, durbinFirstPassageDensityBrownian1992a, peskirIntegralEquationsArising2002}. While there is a wealth of results for the individual hitting times, the emergence of \eqref{eq:hitting_time} as a functional scaling limit of continuous stochastic Volterra processes provides a new perspective on these classical objects.

\subsection{Scaling limits of stochastic Volterra processes}

Our motivation originates from recent results on particular scaling limits of integrated affine stochastic Volterra processes of the form
\begin{equation}
\label{eq:affine_Volterra_intro}
X_t = \int_0^t Y_s\,ds, 
\qquad 
Y_t = g_0(t)  + \int_0^t K(t-s)\left(-\lambda Y_s\,ds + \nu\sqrt{Y_s}\,dB_s\right).
\end{equation}
Such processes arise in the study of population dynamics \cite{feller1951diffusion}, catalytic superprocesses \cite{dawson_super-brownian_1994,mytnik_uniqueness_2015}, scaling limits of Hawkes processes \cite{jaisson2016rough}, and stochastic volatility modeling \cite{abi2019affine}. In the classical Markovian setting $K\equiv 1$, \cite{fordeLARGEMATURITYSMILEHESTON} first connected these processes to the Inverse Gaussian distribution via large-time asymptotics, while \cite{mechkovFastReversionLimitHeston2014, McCrickerd2021, jaber2024reconciling} did so under a fast rescaling, a regime recently extended to fractional kernels in \cite{bondiLevyProcessesWeak2025}. In the non-Markovian setting, \cite{dawson_super-brownian_1994} obtains a general L\'evy distribution in the large-time limit for catalytic superprocesses. These limits can typically be expressed by first passage times for Brownian motion subject to affine boundaries (see \cite[Section 1.2.5]{Kyprianou}). Of particular relevance to the present work, the case of a fractional kernel $K(t)=t^{\alpha-1}$ with $\alpha < 1/2$ was recently studied in \cite{abi2025hyper} and it was shown that $X$ admits an Inverse Gaussian L\'evy process as a scaling limit when $\alpha \downarrow 0$ (this was first studied in \cite{jaber2024reconciling} via a Markovian proxy).

In the above works, the hitting time interpretations of the limits are associated with linear boundaries. Moreover, the analysis relies heavily on the affine structure and can exploit the semi-explicit form of the characteristic functional of $X\,$ to establish finite-dimensional convergence to a known limiting distribution. The work \cite{McCrickerd2021} is a notable exception: it proceeds directly from the dynamics via a time-change argument in a pathwise framework, an approach that is closer in spirit to the present paper, though it is not adapted to the Volterra setting. Beyond the affine setting, a natural question of independent mathematical interest is whether first passage processes for curved boundaries arise as the singular limits when the kernel tends to a Dirac mass in a suitable non-affine version of \eqref{eq:affine_Volterra_intro}. This would uncover a general structural link between continuous stochastic Volterra processes and pure-jump processes defined by Brownian first hitting times of curved boundaries as in \eqref{eq:hitting_time}.

One possible approach to this would be to consider \eqref{eq:affine_Volterra_intro} for non-affine stochastic Volterra equations obtained by replacing the square-root diffusion coefficient $\sqrt{Y}$ with a more general cofficient $\sigma(Y)$. This would yield a standard stochastic Volterra equation for $Y$, see for instance \cite{berger1980volterra, abi2021weaksolution}. However, such a modification does not preserve a key structural property underlying the hitting time limits in the affine case. Indeed, thanks to the stochastic Fubini theorem, the affine dynamics imply that $X$ in \eqref{eq:affine_Volterra_intro} satisfies an autonomous stochastic equation
\begin{align}\label{eq:introbarVaffine}
    X_t = G_0(t)  + \int_0^t K(t-s)\left(-\lambda X_s + \nu M_s\right)\, ds,
\end{align}
where $M$ is a continuous martingale with quadratic variation $\langle M \rangle = X$. This autonomy is central to identifying the limiting process as a Brownian first passage time.  Introducing nonlinearities directly in the dynamics of $Y$ typically destroys this property and would therefore prevent the appearance of hitting time characterisations in the limit.

\subsection{Volterra clocks and limit theorems}

The above considerations naturally lead us to study nonlinearities directly at the level of the equation \eqref{eq:introbarVaffine} for $X$.  Specifically, we introduce a class of processes which arise from allowing $M$ in  \eqref{eq:introbarVaffine}  to be a continuous martingale with quadratic variation $\langle M \rangle = f(X)$, for a non-negative, non-decreasing function $f$, with $f(x)=x$ reducing to the original setting. In our analysis, we exploit this structure as follows. By a suitable version of the Dambis--Dubins--Schwarz theorem, we can recast the equation for $X$ in the general form
\begin{equation}
\label{eq:introclock}
X_t
=
G_0(t)
+
\int_0^t
K(t-s)\,
\big(-\lambda X_s + \nu M_s \big)\,ds ,\quad M_t=W_{f(X_t)}
\end{equation}
on an enlarged probability space, where $W$ is a Brownian motion satisfying certain regularity conditions with respect to $X$. While the formulation  \eqref{eq:affine_Volterra_intro} demands a square-integrable kernel, we note that $K$ only needs to be (locally) integrable in \eqref{eq:introbarVaffine} and \eqref{eq:introclock}. Beyond this, $G_0$ is a non-negative, non-decreasing function, $\lambda \geq 0$, and $f$ is a non-negative, non-decreasing function of at most quadratic growth.

We shall refer to a non-decreasing process $X$ satisfying \eqref{eq:introclock} as a \emph{stochastic Volterra clock}. Under some minor assumptions, we establish the existence of such processes in Theorem \ref{thm:weak_existence_Volterra_clock_DDS}. We then proceed to study their singular limits, as the kernel degenerates. In particular, Theorem \ref{thm:weak_convergence} shows that we have the joint weak convergence
\begin{align}\label{eq:intro1}
(W^n, X^n)  \Rightarrow (W^*, X^*)\quad \text{as}\quad  K^n(t)\,dt \Rightarrow \delta_0(dt), 
\end{align}
with
\begin{align}\label{eq:intro2}
X_t^*
=
\inf \{ s \ge 0 : (1 + \lambda)s - \nu W^*_{f(s)} > G_0(t) \},\quad t\geq 0,
\end{align}
where $X^n$ converges weakly in Skorokhod's $M_1$ topology. That is, when the kernel of the Volterra dynamics collapses into a Dirac mass at zero, the continuous clocks $X^n$ converge to a pure-jump c\`adl\`ag process given by the first passage times of a time-changed Brownian motion to an affine boundary. Unwinding the time-change, we recover the setting that we discussed above, namely that of a standard Brownian motion crossing a curved boundary. Indeed, for $X^*_t$ in \eqref{eq:intro2}, we can observe that $f(X^*_t)$ corresponds to $\tau_t$ in \eqref{eq:hitting_time} with $  b(s) = (1 + \lambda)f^{-1}(s) / \nu$ and $G(t) = G_0(t) / \nu\,$. In particular, when $f$ is quadratic, one obtains a square-root boundary.

Our analysis poses new technical challenges and raises some interesting questions about convergence on Skorokhod space. First of all, we no longer have access to the affine machinery with the identification of explicit finite-dimensional distributions in the limit. Instead, we succeed in directly characterising the weak limit by \eqref{eq:intro2} at the process level, jointly with the Brownian motion, through a simple topological framework that exploits the time-changed structure of \eqref{eq:introclock} combined with a new martingale argument. This hinges on a suitable notion of convergence for processes time-changed by the clock $X^n$, including the martingale $M^n$ in \eqref{eq:introclock}, which furthermore leads us to study weak convergence on a decorated Skorokhod space (see Section \ref{sect:decorated_paths} and Theorem \ref{thm:weak_convergence}). We believe these ideas could be of relevance more broadly. For example, limit theorems for anomalous diffusion often involve time-changed processes converging weakly, in the uniform or $J_1$ topology, to a Brownian motion or stable L\'evy process time-changed by the continuous inverse of a stable subordinator, see \cite{Meerschaert_2014}. If the limiting time-change can have jumps (e.g., the case of subordinate Brownian motion \cite{Kim_Song_Vondracek}), then the machinery introduced in Section \ref{sect:func_conv_frame} could become relevant.

In summary, our results reveal a strong structural connection between stochastic Volterra processes and boundary crossing problems for Brownian motion with curved boundaries. Specifically, our new class of Volterra clocks are shown to be natural continuous approximations of Brownian first hitting time processes in a way that covers a broad range of singular scaling regimes, characterised by the degeneration of memory kernel $K(t)\,dt \Rightarrow \delta_0(dt)$. In particular, this allows us to treat several regimes that have been studied separately within a unified framework, including   
(i) large-time limits (Section~\ref{subsubsec:large_time}), (ii) fast mean-reversion limits (Section~\ref{subsubsec:large_mean_rev}), and
(iii) hyper-rough regimes for fractional kernels with $\alpha \downarrow 0$ (Section~\ref{subsubsec:H_to_-1/2}).
Even in the affine setting with $f(x)=x$, our results unify and extend existing results in several directions, and we obtain new large-time convergence results in both the Markovian and rough settings.

We conclude the introduction by noting that one can further clarify the structure of the stochastic Volterra clocks when the kernel $K$ is square-integrable. In that case, $X$ given by \eqref{eq:introclock} is absolutely continuous with $X_t=\int_0^t Y_s\,ds$, where
\begin{equation}\label{eq:square-integrable_case}
Y_t
=
g_0(t)
+
\int_0^t
K(t-s)
\Bigl(
-\lambda Y_s\,ds
+
\nu\sqrt{f'(X_s)Y_s}\,dB_s
\Bigr)
\end{equation}
Unless $f^\prime$ is constant (corresponding to the affine setting), this equation for $Y$ is no longer autonomous and now involves a path-dependent diffusion coefficient through $f'(X_s)$.  Such path-dependent diffusivity structures could be of independent interest for modeling purposes in several applied contexts, including population dynamics, catalytic superprocesses, and stochastic volatility models, see Remark \ref{rem:interpretation_Volterra_clock}.

\paragraph{Outline of the paper.} In Section \ref{sect:func_conv_frame} we first introduce the framework for functional convergence that we shall be working with. {Next, Section \ref{sec:main_results}} presents our main results, including the weak existence for our stochastic Volterra clocks (Theorem \ref{thm:weak_existence_Volterra_clock_DDS}), the weak convergence to Brownian first passage processes as the memory kernel degenerates (Theorem \ref{thm:weak_convergence}),   {and a refined convergence result connecting pre-limit bursts to the hitting time realisation of the jumps in the limit (Theorem \ref{thm:bursts})}. {Finally, Section \ref{subsec:applications}} covers applications of our methodology to large-time (\ref{subsubsec:large_time}), fast (\ref{subsubsec:large_mean_rev}) and hyper-rough (\ref{subsubsec:H_to_-1/2}) scalings. {The proofs of the results in Sections \ref{sect:func_conv_frame} and \ref{sec:main_results} are given in Section \ref{sec:proofs}}. Some technical results on the Dambis--Dubins--Schwarz theorem and convolution kernels and their resolvents are deferred to Appendices \ref{app:DDS} and \ref{app:resolvents}.

\section{Functional convergence framework}\label{sect:func_conv_frame}

While the $M_1$ topology is appropriate for $X^n$, it is not suitable for processes time-changed by the clock. In particular, the time-changed Brownian motion in the Volterra equation for $X^n$ cannot converge weakly in $M_1$ by \cite[Corollary 3.5]{sojmark2023weak}. Thus, we shall need a different framework that will allow us to deal effectively with the convergence of such processes. We present this here, with the proofs given in Section \ref{subsect:func_conv_frame_proofs}.

\subsection{Convergence of time-changed paths}\label{sect:time-changed_func_framework}

Let $C[0,\infty)$ denote the space of continuous paths $x : [0,\infty)\rightarrow \mathbb{R} $, and, for any given $T>0$, let $D[0,T]$ denote the space of c\`adl\`ag paths $x:[0,T]\rightarrow \mathbb{R}$. Furthermore, we introduce the space of non-negative and non-decreasing c\`adl\`ag paths
\begin{equation}
	D^\uparrow[0,T] = \{ x\in D[0,T]: 0\leq x(s)\leq x(t)\;\text{if}\;s\leq t  \}.
\end{equation}

We are interested in elements $x= \phi \circ \tau \in D[0,T]$ that arise explicitly as a time change of some $\phi \in C[0,\infty)$ by some $\tau \in D^\uparrow[0,T]$. We let it be understood that $\phi(s)=\phi(\tau(T))$ for all $s\geq \tau(T)$, as we are free to redefine $\phi$ outside the range of $\tau$. To work with such time-changed c\`adl\`ag paths, we define the space
\begin{equation}
	D^\circ[0,T] :=	\bigl\{ (\phi,\tau) :  \phi\in C[0,\infty), \,\tau \in 	D^\uparrow[0,T]  \bigr\},
\end{equation}
This yields a simple way of encoding more information about a given $x= \phi \circ \tau$ than the c\`adl\`ag path alone. For example, we have $x= \phi \circ \tau = \tilde{\phi}\circ \tau$ if $\tilde{\phi}$ equals $\phi$ outside the intervals $[\tau(t-),\tau(t)]$, for $t$ such that $\tau(t-)\neq \tau(t)$,
but we wish to view $(\phi,\tau)$ and $(\tilde{\phi},\tau)$ as two different processes (in $D^\circ[0,T]$), which share the same base path $x$, but which may `traverse' a jump from $x(t-)$ to $x(t)$ in different ways, as described by the function $\phi$ or $\tilde{\phi}$, respectively, on $[\tau(t-),\tau(t)]$.

We equip $	D^\circ[0,T]$ with the product topology, where $C[0,\infty)$ is given the topology of uniform convergence on compacts and $D^\uparrow[0,T]$ is given the $M_1$ topology. Consequently, $D^\circ[0,T]$ is a Polish space. Moreover, the space is easy to work with, as it is constructed directly from well-known spaces. In particular, we can recall that the $M_1$ topology admits
a simple characterisation in this setting. Indeed, for $\tau^n,\tau \in D^\uparrow[0,T]$, we have $\tau^n \rightarrow \tau$ in the $M_1$ topology if and only if $\tau^n(t)\rightarrow \tau(t)$ for all $t$ in a dense subset of $[0,T]$ that includes $0$ and $T$ (see \cite[Corollary 12.5.1]{whitt2002stochastic}).

Before addressing various convergence properties, we first introduce some notation. For any given $t\geq0$ and $\delta>0$, we write
\[
B(t,\delta):=[t-\delta,t+\delta]\cap [0,T].
\]
 Furthermore, for $z^n,z\in D[0,T]$, we will say that $z^n\rightarrow z$ locally uniformly on $\mathbb{I}$ if there is a co-countable subset $\mathbb{I} \subseteq[0,T]$ so that, for every $t \in \mathbb{I}$, we have
\[
\limsup_{n\rightarrow \infty}  \,\sup \{ | z^n(s)-z(r) |: s,r \in B(t,\delta) \} \rightarrow 0 \quad \text{as}\quad  \delta \rightarrow 0.
\]
When this is the case, we shall write $z^n \rightarrow_{\emph{\text{l.u.}}} z$ on $\mathbb{I}$. Finally, for any given $z\in D[0,T]$, we define the set of its discontinuity points
\[
D(z):=\{ t\in(0,T] : z(t)\neq z(t-)\},
\]
which we note is countable. The complement of such a set, or some version of this, will often play the role of the set $\mathbb{I}$ encountered above.

\begin{prop}[Locally uniform convergence]\label{prop:conv_compositions}
	Consider $x^n=\phi^n\circ \tau^n$ and $x=\phi\circ \tau$. If $(\phi^n,\tau^n) \rightarrow (\phi,\tau) $ in $D^\circ[0,T]$, then $\sup_{n\geq 1}\sup_{t\in[0,T]}|x_n(t)|<\infty$ and
	\begin{equation}\label{eq:pointwise}
x^n \rightarrow_{\text{l.u.}} x\quad\text{on} \quad \mathbb{I}:=[0,T]\!\setminus\!D(\tau)
	\end{equation}
with $x^n(T)\rightarrow x(T)$. In particular, $x^n(t)\rightarrow x(t)$ for all $t \notin  D(\tau)$ and $t=T$.
\end{prop}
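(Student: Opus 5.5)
The plan is to use two ingredients: uniform convergence of $\phi^n$ to $\phi$ on compacts, and the pointwise characterisation of $M_1$ convergence in $D^\uparrow[0,T]$ from \cite[Corollary 12.5.1]{whitt2002stochastic}. That characterisation says $\tau^n(t)\to\tau(t)$ for all $t$ in a dense set containing $0$ and $T$.

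\emph{Uniform boundedness.} Since $\tau^n(T)\to\tau(T)$, there is $S<\infty$ with $\tau^n(T)\le S$ for all $n$. Monotonicity then gives $\tau^n(t)\in[0,S]$ for every $t\in[0,T]$ and every $n$. Uniform convergence of $\phi^n$ on $[0,S]$ gives $\sup_n\sup_{u\le S}|\phi^n(u)|<\infty$, and the first claim follows immediately.

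\emph{Local uniform convergence at continuity points.} Fix $t\in\mathbb{I}$, i.e. $\tau$ continuous at $t$. Take $t=T$ to be covered as well, using left-continuity of nothing but the convergence $\tau^n(T)\to\tau(T)$ (handled separately below). Given $\varepsilon>0$, pick $\eta>0$ with $|\phi(u)-\phi(v)|<\varepsilon$ for $u,v\in[0,S+1]$, $|u-v|\le\eta$ (uniform continuity). By continuity of $\tau$ at $t$, and density of the convergence set $\mathcal{T}$, choose $\delta>0$ and points $t_-<t-\delta$, $t_+>t+\delta$ in $\mathcal{T}$ (or $t_\pm$ equal to the endpoints $0,T$ when near the boundary). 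These are chosen so that $\tau(t_+)-\tau(t_-)<\eta/2$. This is possible since $\tau(t\pm h)\to\tau(t)$.

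For $n$ large, we have $|\tau^n(t_\pm)-\tau(t_\pm)|<\eta/4$ and $\sup_{u\le S}|\phi^n-\phi|<\varepsilon$. By monotonicity, for $s,r\in B(t,\delta)$ both $\tau^n(s)$ and $\tau(r)$ lie in $[\tau(t_-)-\eta/4,\tau(t_+)+\eta/4]$, an interval of length $<\eta$. Hence
\[
|x^n(s)-x(r)|\le |\phi^n(\tau^n(s))-\phi(\tau^n(s))|+|\phi(\tau^n(s))-\phi(\tau(r))|<2\varepsilon .
\]
This gives the required $\limsup$ statement as $\delta\to0$.

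\emph{The endpoint $T$.} Here $x^n(T)=\phi^n(\tau^n(T))\to\phi(\tau(T))$ directly, by uniform convergence on $[0,S]$ together with continuity of $\phi$. The pointwise statement for $t\notin D(\tau)$ is the special case $s=r=t$ of the local uniform convergence.

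\emph{Main obstacle.} The only delicate point is the sandwiching step. It relies on the convergence points $t_\pm$ straddling $B(t,\delta)$ and on the monotonicity of both $\tau^n$ and $\tau$, so that the pointwise-on-a-dense-set $M_1$ characterisation controls $\tau^n$ uniformly on a neighbourhood. One also needs $0\in\mathcal{T}$ for $t=0$, and must ensure all arguments stay in the compact $[0,S]$, which holds since $\tau^n(T)\le S$.
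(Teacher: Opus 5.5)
Your proof is correct and follows essentially the same route as the paper. You get uniform boundedness from $\sup_n \tau^n(T)<\infty$ together with uniform convergence of $\phi^n$ on compacts, and local uniform convergence from uniform continuity of $\phi$ combined with local uniform control of $\tau^n$ near continuity points of $\tau$; the only difference is that you derive that local control yourself, by sandwiching between convergence points $t_\pm$ using monotonicity, whereas the paper cites \cite[Theorem 12.5.1(v)]{whitt2002stochastic} directly.
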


In particular, if we have $x^n=\phi^n\circ \tau^n=g^n\circ \varsigma^n$ with $(\phi^n,\tau^n)\rightarrow (\phi,\tau)$ and $(g^n,\varsigma^n)\rightarrow (g,\varsigma)$ in $D^\circ[0,T]$, then Proposition \ref{prop:conv_compositions} gives that $\phi\circ \tau = g\circ \varsigma$ in $D[0,T]$, by right-continuity. So, there is uniqueness of limits at the level of the compositions in $D[0,T]$.

In \cite{jakubowski1997non}, Jakubowski introduced a new topology on the Skorokhod space $D[0,T]$ which he called the $S$-topology. While convergence of $(\phi^n,\tau^n)$ in $D^\circ[0,T]$ does \emph{not} imply convergence of the composed paths $x^n=\phi^n \circ \tau^n$ in any of the Skorokhod topologies, it does indeed imply their convergence in the $S$-topology.

\begin{prop}[$S$-convergence]\label{prop:S-conv} Let everything be as in Proposition \ref{prop:conv_compositions}. Then, we furthermore have that $x^n \rightarrow x$ in $D[0,T]$ with respect to the $S$-topology.
	\end{prop}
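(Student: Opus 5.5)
We will use Jakubowski's sufficient criterion for $S$-convergence from \cite{jakubowski1997non}. It says $x^n \to x$ in the $S$-topology if and only if, for every $\varepsilon>0$, there exist functions $v^{n,\varepsilon}, v^{\varepsilon}$ of bounded variation with the following three properties:
\begin{enumerate}[label=(\roman*)]
\item $\|x^n - v^{n,\varepsilon}\|_\infty \le \varepsilon$ and $\|x - v^{\varepsilon}\|_\infty\le\varepsilon$;
\item the total variations of $v^{n,\varepsilon}$ are bounded uniformly in $n$;
\item $v^{n,\varepsilon}\to v^\varepsilon$ weakly$^*$ as measures on $[0,T]$, meaning $v^{n,\varepsilon}(t)\to v^\varepsilon(t)$ for $t$ in a dense set containing $T$, together with uniform boundedness.
\end{enumerate}
An equivalent route is to check $S$-relative compactness and then identify the limit. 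Relative compactness follows from uniform boundedness, which Proposition \ref{prop:conv_compositions} already gives, plus a uniform bound on the number of $\varepsilon$-oscillations $N_\varepsilon(x^n)$. Identification then uses the fact that $S$-limits are determined by convergence on a co-countable set together with $t=T$. We plan to take this second route, since Proposition \ref{prop:conv_compositions} supplies pointwise convergence on $[0,T]\setminus D(\tau)$ and at $T$. If the sequence is $S$-relatively compact, any subsequential $S$-limit $y$ satisfies $x^{n_k}(t)\to y(t)$ off a countable set. Hence $y=x$ on a dense set containing $T$, and so $y=x$ by right-continuity.

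The key step is bounding the number of $\varepsilon$-upcrossings (or $\varepsilon$-oscillations) of $x^n=\phi^n\circ\tau^n$ uniformly in $n$. We will use the monotonicity of $\tau^n$. Every oscillation of $x^n$ of size $\varepsilon$ on consecutive times $t_1<t_2<\dots$ corresponds to an oscillation of $\phi^n$ of size $\varepsilon$ at the increasing times $\tau^n(t_1)\le\tau^n(t_2)\le\dots$, all lying in $[0,\tau^n(T)]$. Thus $N_\varepsilon(x^n)\le N_\varepsilon(\phi^n|_{[0,\tau^n(T)]})$. Since $\tau^n(T)\to\tau(T)$, these intervals sit inside a fixed compact $[0,L]$. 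The $\phi^n$ converge uniformly on $[0,L]$ to a continuous $\phi$, so they are uniformly equicontinuous there. Choose $\delta$ with modulus $w(\phi^n,\delta)<\varepsilon$ for all large $n$. Then any $\varepsilon$-oscillation of $\phi^n$ needs a time gap of at least $\delta$, which gives $N_\varepsilon\le L/\delta+1$ uniformly in $n$. This is exactly Jakubowski's tightness/compactness criterion in deterministic form.

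If one prefers the direct construction in (i)--(iii), we would take $v^{n,\varepsilon}=\psi^n\circ\tau^n$, where $\psi^n$ is a piecewise-linear interpolation of $\phi^n$ on a fixed grid of mesh $\delta$ in $[0,L]$. The total variation of $\psi^n\circ\tau^n$ is at most that of $\psi^n$ on $[0,L]$, which is bounded uniformly. Moreover $(\psi^n,\tau^n)\to(\psi,\tau)$ in $D^\circ[0,T]$, so Proposition \ref{prop:conv_compositions} gives the pointwise convergence in (iii). The main obstacle is the precise matching of our convergence statement with Jakubowski's characterisation of $S$-convergence, namely convergence at $T$ and on a co-countable set, which is why the statement $x^n(T)\to x(T)$ in Proposition \ref{prop:conv_compositions} matters. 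We will cite his Theorem 2.13/Corollary 2.9 for the criterion rather than reprove it.
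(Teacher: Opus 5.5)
Your proposal is correct and follows essentially the paper's route: uniform boundedness from Proposition \ref{prop:conv_compositions} plus a uniform bound on the number of oscillations gives $S$-relative compactness via \cite[Theorem 2.13]{jakubowski1997non}, and the limit is then identified from convergence off $D(\tau)$ and at $T$ (the paper cites \cite[Proposition 2.14]{jakubowski1997non} for this step). The only difference is minor. You count the oscillations of $\phi^n$ on $[0,L]$ directly, getting at most $L/\delta$. The paper instead bounds $N_\eta(x^n)$ by the number of $\delta$-oscillations of $\tau^n$ and cites $M_1$ relative compactness, which for non-decreasing $\tau^n$ bounded by $L$ comes down to the same count.
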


The convergence in $D^\circ[0,T]$ keeps track of critical information which the $S$-topology ignores. This is seen already from Proposition \ref{prop:conv_compositions}, as the marginal projections are nowhere continuous for the $S$-topology. For this work, we are interested in continuous paths $x^n=\phi^n \circ \tau^n$ converging to a discontinuous limit. In that case, we can furthermore make the following observation, which highlights a particular instance of the information that we can attach to a limiting c\`adl\`ag path $x$. We do the arguments also for $x^n$ that are nearly continuous for large $n$ in the sense of uniformly vanishing jumps.

\begin{prop}[Limiting jump range]\label{prop:cont_image_jump}
Let $d_\mathrm{H}$ be the Hausdorff distance on $\mathbb{R}$. Consider $x^n=\phi^n\circ \tau^n$ and $x=\phi\circ \tau$, and suppose $(\phi^n,\tau^n) \rightarrow (\phi,\tau)$ in $D^\circ[0,T]$ with $\sup_{t\in D(\tau^n)}\Delta \tau^n(t)\rightarrow 0$. Then, for every $t\in D(\tau)$, we have
\begin{equation}\label{eq:jump_range}
\limsup_{n\rightarrow \infty} d_{\mathrm{H}}\bigl(\hspace{1.2pt}x^n\bigl(B(t,\delta)\bigr) ,\hspace{1.2pt}\phi\bigl([\tau(t-),\tau(t)]\bigr)\bigr) \rightarrow 0\quad \text{as} \quad \delta \rightarrow 0.
\end{equation}
\end{prop}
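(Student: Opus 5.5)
The claim reduces to a statement about the images $\tau^n(B(t,\delta))$. The idea is to show that, for large $n$ and small $\delta$, these images are close in Hausdorff distance to the interval $[\tau(t-),\tau(t)]$. Uniform convergence of $\phi^n$ to $\phi$ on compacts then transfers this to $x^n=\phi^n\circ\tau^n$. Throughout, fix $t\in D(\tau)$, so $t>0$ and $\tau(t-)<\tau(t)$. Since $\tau$ has countably many jumps, we may work with $\delta>0$ such that $t\pm\delta$ lie in the dense set where $\tau^n\to\tau$ pointwise (cf.\ \cite[Corollary 12.5.1]{whitt2002stochastic}), and such that $t\pm\delta\notin D(\tau)$. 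The endpoint case $t+\delta>T$ is handled by using $T$, where convergence also holds. Monotonicity in $\delta$ of the quantities involved will let us pass from this dense family of $\delta$ to the limit $\delta\to0$.

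\textbf{Step 1: localising the range of $\tau^n$.} By monotonicity, $\tau^n(B(t,\delta))\subseteq[\tau^n(t-\delta),\tau^n(t+\delta)]$. Moreover, $\tau^n(t-\delta)\to\tau(t-\delta)$ and $\tau^n(t+\delta)\to\tau(t+\delta)$. As $\delta\to0$, these limits tend to $\tau(t-)$ and $\tau(t)$ respectively, by the existence of left limits and right-continuity. Hence the image lies, for large $n$, within an $\epsilon(\delta)$-neighbourhood of $[\tau(t-),\tau(t)]$, where $\epsilon(\delta)\to0$.

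\textbf{Step 2: almost-filling.} This is the key step, and the only place where the hypothesis $\sup_{s\in D(\tau^n)}\Delta\tau^n(s)\to0$ enters. Since $\tau^n$ is non-decreasing and càdlàg on $[t-\delta,t+\delta]$ with jumps at most $\eta_n\to0$, its image is $\eta_n$-dense in $[\tau^n(t-\delta),\tau^n(t+\delta)]$. Indeed, for any $y$ in that interval, set $s^*=\inf\{s\ge t-\delta:\tau^n(s)\ge y\}$. Then $\tau^n(s^*-)\le y\le\tau^n(s^*)$, so $y$ is within $\eta_n$ of $\tau^n(s^*)$. Combining with Step 1, $d_{\mathrm H}\bigl(\tau^n(B(t,\delta)),[\tau(t-),\tau(t)]\bigr)\le\eta_n+\epsilon'(\delta)$ for large $n$, where $\epsilon'(\delta)\to0$. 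Without this jump hypothesis, $\tau^n$ could leap across the interval and the claim would fail, which is why I expect this step to be the main point.

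\textbf{Step 3: transfer through $\phi^n$.} All the sets above lie in a compact $[0,R]$, with $R=\sup_n\tau^n(T)+1<\infty$ since $\tau^n(T)\to\tau(T)$. On $[0,R]$, the functions $\phi^n$ converge uniformly to $\phi$, and $\phi$ is uniformly continuous with modulus $\omega$. Therefore
\[
d_{\mathrm H}\bigl(x^n(B(t,\delta)),\phi([\tau(t-),\tau(t)])\bigr)\le\sup_{[0,R]}|\phi^n-\phi|+\omega\bigl(\eta_n+\epsilon'(\delta)\bigr).
\]
This uses the elementary fact that if $A,B\subseteq[0,R]$, then $d_{\mathrm H}(\phi(A),\phi(B))\le\omega(d_{\mathrm H}(A,B))$, and $d_{\mathrm H}(\phi^n(A),\phi(A))\le\|\phi^n-\phi\|_{[0,R]}$. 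Taking $\limsup_n$ leaves $\omega(\epsilon'(\delta))$, which tends to $0$ as $\delta\to0$. This gives \eqref{eq:jump_range}.
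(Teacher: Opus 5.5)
Your proof is correct and follows essentially the same route as the paper. You enclose $\tau^n(B(t,\delta))$ in $[\tau^n(t-\delta),\tau^n(t+\delta)]$, whose endpoints converge near $\tau(t-)$ and $\tau(t)$, use the vanishing jumps to show the image nearly fills this interval, and then transfer through uniform convergence of $\phi^n$ and uniform continuity of $\phi$; doing the Hausdorff estimate for the $\tau^n$-images before applying $\phi$, rather than comparing the extreme values of $\phi^n([\tau^n(t-\delta),\tau^n(t+\delta)])$ and $\phi([\tau(t-),\tau(t)])$ as the paper does, is only a cosmetic difference. One small point: the Hausdorff distance itself is not monotone in $\delta$, so the passage from good $\delta$ to all small $\delta$ should be done at the level of the endpoints, via $\tau^n(t-\delta_2)\le\tau^n(t-\delta)\le\tau^n(t-\delta_1)$ (and similarly on the right) for good $\delta_1<\delta<\delta_2$, which is exactly what the paper's points $l\le t-\delta\le l^\prime$ and $r\le t+\delta\le r^\prime$ achieve.
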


As per Proposition \ref{prop:conv_compositions}, convergence in $D^\circ[0,T]$ with limit $x=\phi \circ \tau$ always yields
\[
\limsup_{n\rightarrow \infty} d_{\mathrm{H}}\bigl(\hspace{1.5pt}x^n\bigl(B(t,\delta)\bigr) ,\hspace{1.2pt}
\{x(t)\} \hspace{0.2pt}\bigr) \rightarrow 0\quad \text{as}\quad \delta \rightarrow 0,
\]
for $t\notin D(\tau)$. 
Note that there can be a non-trivial limiting range $\phi([\tau(t-),\tau(t)]) \neq \{ x(t)\}$ even if $t$ is a continuity point of $x$. The following observation for linear combinations of paths follows in the same way as the previous proposition.

\begin{cor}\label{cor:sum_hausdorff_range} Let $(\phi^n,\tau^n)$ and $(\psi^n,\varsigma^n)$ be as in Proposition \ref{prop:cont_image_jump}. Write $y^n=\psi^n\circ \varsigma^n$ and $y=\psi \circ \varsigma$. If $D(\tau) \cap D(\varsigma) = \emptyset$, then, for all $t\in[0,T]$, we have
	\[
\limsup_{n\rightarrow \infty} d_{\mathrm{H}}\bigl(\hspace{1.5pt}(x^n+y^n)\bigl(B(t,\delta)\bigr) ,\hspace{1.2pt}
I_t\hspace{0.2pt}\bigr) \rightarrow 0\quad \text{as}\quad \delta \rightarrow 0,
	\]
where $I_t = \{x(t)+y(t)\}$ if $t\notin D(\tau) \cup D(\varsigma)$, $I_t = \{x(t)\} + \psi([\varsigma(t-),\varsigma(t)])$ if $t\in D(\varsigma)$ and $I_t = \{y(t)\} + \phi([\tau(t-),\tau(t)])$ if $t\in D(\tau)$. 
\end{cor}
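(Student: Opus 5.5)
The plan is to reduce the statement to Proposition \ref{prop:conv_compositions} and Proposition \ref{prop:cont_image_jump}, applied separately to $x^n$ and $y^n$, and then combine them with an elementary Hausdorff-distance estimate for sums. The basic inequality is this: for bounded sets $A,A',C,C'\subset\mathbb{R}$ we have $d_{\mathrm H}(A+C,A'+C')\le d_{\mathrm H}(A,A')+d_{\mathrm H}(C,C')$. Also, $(x^n+y^n)(B(t,\delta))\subseteq x^n(B(t,\delta))+y^n(B(t,\delta))$, and every point of the target sum is approached by points of the form $x^n(s)+y^n(s)$. The difficulty is that the image of a sum is not the Minkowski sum of the images, so the inequality cannot be applied directly. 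I will avoid this by always having one of the two summands be asymptotically constant on $B(t,\delta)$.

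Fix $t\in[0,T]$. Since $D(\tau)\cap D(\varsigma)=\emptyset$, at most one of $\tau,\varsigma$ jumps at $t$. Suppose by symmetry that $t\notin D(\varsigma)$. Then Proposition \ref{prop:conv_compositions} gives $y^n\to_{\text{l.u.}} y$ at $t$, meaning
\[
\varepsilon_n(\delta):=\sup_{s\in B(t,\delta)}|y^n(s)-y(t)|\quad\text{satisfies}\quad\limsup_n\varepsilon_n(\delta)\to0 \text{ as }\delta\to0.
\]
Indeed, the definition of l.u.\ convergence compares $y^n(s)$ with $y(r)$ for $r\in B(t,\delta)$, and we may take $r=t$. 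It follows that every point of $(x^n+y^n)(B(t,\delta))$ lies within $\varepsilon_n(\delta)$ of a point of $x^n(B(t,\delta))+y(t)$, and conversely. Hence
\[
d_{\mathrm H}\bigl((x^n+y^n)(B(t,\delta)),\,x^n(B(t,\delta))+\{y(t)\}\bigr)\le\varepsilon_n(\delta).
\]
Translation preserves Hausdorff distance, so it remains to control $d_{\mathrm H}(x^n(B(t,\delta)),J_t)$, where $J_t=\phi([\tau(t-),\tau(t)])$ if $t\in D(\tau)$ and $J_t=\{x(t)\}$ otherwise. In the first case this is exactly Proposition \ref{prop:cont_image_jump}. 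In the second case it is Proposition \ref{prop:conv_compositions} again, as in the remark following Proposition \ref{prop:cont_image_jump}. Taking the triangle inequality, then $\limsup_n$, then $\delta\to0$, gives the claim with $I_t=\{y(t)\}+J_t$.

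The statement writes $I_t=\{x(t)+y(t)\}$ in the doubly continuous case. This matches, because when $t\notin D(\tau)$ we have $J_t=\{x(t)\}$. The case $t\in D(\varsigma)$ is handled symmetrically, with the roles of $x$ and $y$ exchanged. At $t=T$ one should check that the l.u.\ statement still applies. Since $T\in D(\tau)$ is allowed by the definition of $D(\cdot)$, the argument above covers it with $B(T,\delta)=[T-\delta,T]$.

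The only potentially delicate step is ensuring that the l.u.\ convergence of $y^n$ at a non-jump time $t$ holds uniformly over the whole ball $B(t,\delta)$, not just pointwise. This is exactly what the l.u.\ definition in Proposition \ref{prop:conv_compositions} provides, so I expect no real obstacle. If needed, I would recheck that Proposition \ref{prop:conv_compositions} yields l.u.\ convergence on all of $[0,T]\setminus D(\varsigma)$, including the endpoints.
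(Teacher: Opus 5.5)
Your proof is correct and is essentially the paper's argument. The paper only says the corollary "follows in exactly the same way as Proposition~\ref{prop:cont_image_jump}". Your reduction is precisely the detail that sentence leaves implicit: disjointness lets you use Proposition~\ref{prop:conv_compositions} to make the non-jumping summand uniformly close to its value at $t$ on $B(t,\delta)$. Translation invariance of $d_{\mathrm H}$ then reduces everything to Proposition~\ref{prop:cont_image_jump} (or Proposition~\ref{prop:conv_compositions}) for the other summand.
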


\begin{rem}\label{rem:nearly_cont_phi-n}
All of the above results readily generalise to $\phi^n \in D[0,\infty)$ with uniform convergence on compacts to $\phi \in C[0,\infty)$. This follows similarly to how the vanishing jumps of $\tau^n$ are treated in the proofs together with the continuity of the limit $\phi$.
\end{rem}

We provide two basic examples where the limit of a key functional is characterised by convergence in $D^\circ[0,T]$. In both cases, the limit cannot be deduced from just $S$-convergence of $x^n$ to $x$. Given $(\psi ,\varsigma)\in D^\circ[0,T]$, we define
\[
\underline{\psi}(s):=\!\inf_{r\in[\varsigma(0),\varsigma(0)\lor s]}\!\psi(r), \quad \overline{\psi}(s):=\!\sup_{r\in[\varsigma(0),\varsigma(0)\lor s]}\!\psi(r), \quad \text{and}\quad \hat{\psi}(s)
	:= \psi(s) - \underline{\psi}(s).
\]

\begin{cor}[Running infima and suprema]\label{cor:inf_sup} Consider $x^n=\phi^n\circ \tau^n$ with $(\phi^n,\tau^n)\rightarrow (\phi,\tau)$ in $D^\circ[0,T]$ and $\sup_{t\in D(\tau^n)}\Delta \tau^n(t)\rightarrow 0$. Then, 
$\inf_{s\leq \cdot} x^n_s = \underline{\phi}^n \circ \tau^n + \epsilon^n$ where $\sup_{t\leq T}|\epsilon^n(t)|\rightarrow 0$ and $(\underline{\phi}^n,\tau^n)\rightarrow (\underline{\phi},\tau)$ in $D^\circ[0,T]$, with $\varepsilon^n\equiv 0 $ if $\tau^n$ is continuous. The analogous statement holds for $\sup_{s\leq \cdot } x^n_s$. In particular, we have $\inf_{s\leq \cdot}x^n_s \rightarrow \underline{\phi} \circ \tau$ and $\sup_{s\leq \cdot}x^n_s \rightarrow \overline{\phi} \circ \tau$ in $(D[0,T],M_1)$.
\end{cor}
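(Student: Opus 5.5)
We treat the running infimum; the supremum case is symmetric, with $\overline{\phi}$ in place of $\underline{\phi}$. The plan has three steps: an exact identity when $\tau^n$ is continuous, a perturbation argument for vanishing jumps, and a reduction of the final $M_1$ statement to Proposition \ref{prop:conv_compositions}.

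\textbf{Step 1: the identity for continuous clocks.} Suppose $\tau^n$ is continuous and non-decreasing. Then $\{\tau^n(s): s\le t\}=[\tau^n(0),\tau^n(t)]$, so
\[
\inf_{s\le t}\phi^n(\tau^n(s))=\inf_{r\in[\tau^n(0),\tau^n(t)]}\phi^n(r)=\underline{\phi}^n(\tau^n(t)),
\]
where $\underline{\phi}^n$ is built from the pair $(\phi^n,\tau^n)$, i.e.\ with $\varsigma(0)=\tau^n(0)$. This gives $\epsilon^n\equiv0$ in the continuous case.

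\textbf{Step 2: vanishing jumps.} In general, the range of $\tau^n$ on $[0,t]$ is $[\tau^n(0),\tau^n(t)]$ with the open gaps $(\tau^n(u-),\tau^n(u))$, $u\in D(\tau^n)$, removed. Each gap has length at most $\eta_n:=\sup_{u\in D(\tau^n)}\Delta\tau^n(u)\to0$. Every $r$ in a gap lies within $\eta_n$ of a point of the range. Hence
\[
|\epsilon^n(t)|\le w\bigl(\phi^n,\eta_n;[0,R]\bigr),
\]
where $w$ is the modulus of continuity and $R:=\sup_n\tau^n(T)<\infty$. The bound on $R$ holds because $M_1$ convergence gives $\tau^n(T)\to\tau(T)$. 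Since $\phi^n\to\phi$ uniformly on $[0,R]$ and $\phi$ is uniformly continuous there, we get $w(\phi^n,\eta_n;[0,R])\to0$, uniformly in $t\le T$.

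\textbf{Step 3: convergence in $D^\circ[0,T]$.} The $\tau^n$ component is unchanged, so it suffices to show $\underline{\phi}^n\to\underline{\phi}$ uniformly on compacts. We use the Lipschitz property of running infima:
\[
|\underline{\phi}^n(s)-\underline{\phi}(s)|\le \sup_{[0,S]}|\phi^n-\phi|+\Bigl|\inf_{[\tau^n(0),\tau^n(0)\vee s]}\phi-\inf_{[\tau(0),\tau(0)\vee s]}\phi\Bigr|.
\]
The second term tends to zero uniformly in $s\le S$, by uniform continuity of $\phi$ and $\tau^n(0)\to\tau(0)$. The latter holds because $M_1$ convergence of monotone paths includes the point $0$. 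Continuity of $\underline{\phi}^n$ is clear.

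\textbf{Step 4: the $M_1$ conclusion.} Both $\underline{\phi}^n\circ\tau^n$ and $\underline{\phi}\circ\tau$ are non-increasing, so by the monotone analogue of \cite[Corollary 12.5.1]{whitt2002stochastic} it suffices to have pointwise convergence on a dense set containing $0$ and $T$. Proposition \ref{prop:conv_compositions} supplies $\underline{\phi}^n\circ\tau^n(t)\to\underline{\phi}\circ\tau(t)$ for $t\notin D(\tau)$ and $t=T$. The set $D(\tau)$ is countable and excludes $0$, so its complement is dense and contains $0$ and $T$. Adding $\epsilon^n$, which tends to zero uniformly, preserves this pointwise convergence. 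It also preserves $M_1$ convergence, since $M_1$ convergence is stable under uniformly vanishing perturbations when the limit is fixed.

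\textbf{Main obstacle.} The delicate point is the last perturbation step: $\inf_{s\le\cdot}x^n$ is itself monotone, but $\underline{\phi}^n\circ\tau^n$ and $\epsilon^n$ are not separately tied to that monotone path. To avoid relying on an $M_1$ additivity result, I would argue directly. The path $\inf_{s\le\cdot}x^n_s$ is monotone and converges pointwise on the dense set just identified. Invoking the monotone characterisation of $M_1$ convergence for it directly gives the claim.
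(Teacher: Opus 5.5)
Your proposal is correct and follows essentially the same route as the paper. The steps match: you get uniform convergence of $\underline{\phi}^n$ from $\tau^n(0)\to\tau(0)$ and $\phi^n\to\phi$ on compacts; you use the sandwich $0\le \inf_{s\le t}x^n_s-\underline{\phi}^n(\tau^n(t))\le$ the modulus of continuity of $\phi^n$ at the scale of the largest jump of $\tau^n$; and you obtain $M_1$ convergence from monotonicity of the running infimum together with the pointwise convergence supplied by Proposition~\ref{prop:conv_compositions}. Your resolution of the ``main obstacle'', applying the monotone characterisation directly to $\inf_{s\le\cdot}x^n_s$, is exactly what the paper does, though the perturbation route would also have worked, since $d_{M_1}(y,y+\epsilon)\le\sup_{t\le T}|\epsilon(t)|$.
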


Of course, with $x=\phi \circ \tau$,  the limits $\underline{\phi} \circ \tau$ and $\overline{\phi} \circ \tau$ are in general different from $\inf_{s\leq \cdot} x_s$ and $\sup_{s\leq \cdot} x_s$. Note also that these limits can be recovered from the range $I_t = \phi([\tau(t-), \tau(t)])$ in Proposition \ref{prop:cont_image_jump} alone. The next examples needs the fact that convergence in $D^\circ[0,T]$ also keeps track of the traversal $\phi : [\tau(t-), \tau(t)] \rightarrow I_t$.

\begin{cor}[Skorokhod map]\label{cor:skorokhod}
	With everything as in
	Corollary~\ref{cor:inf_sup}, consider the Skorokhod map $	x^n \mapsto (l^n,\hat{x}^n)$ given by $l^n(t) := -\inf_{s \le t} x^n(s)$
	and $\hat{x}^n(t):= x^n(t) + l^n(t)$.
	If each $\tau^n$ is continuous, then $ l^n = -\underline{\phi}^n \circ \tau^n $ and $\hat{x}^n
	= \hat{\phi}^n \circ \tau^n$ with
	$(\underline{\phi}^n, \tau^n)
	\to (\underline{\phi}, \tau)$ and $(\hat{\phi}^n, \tau^n)
	\to (\hat{\phi}, \tau)$ in $D^\circ[0,T]$.
    If $\sup_{t \in D(\tau^n)}
	\Delta \tau^n(t) \to 0$, the same holds up to $\varepsilon^n(t)$ as in Corollary \ref{cor:inf_sup}. In particular, when $\sup_{t \in D(\tau^n)}
	\Delta \tau^n(t) \to 0$, we have $l^n \to -\underline{\phi} \circ \tau$
	in $(D[0,T], M_1)$ and $\hat{x}^n$ satisfies \eqref{eq:jump_range} with limiting ranges $I_t=\hat{\phi}([\tau(t-), \tau(t)])$.
\end{cor}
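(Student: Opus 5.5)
The plan is to reduce everything to Corollary~\ref{cor:inf_sup}, and then to two facts: continuity of composition with a fixed time change, and stability of uniform-on-compacts convergence under the map $\psi\mapsto(\underline{\psi},\hat\psi)$.

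\textbf{Continuous $\tau^n$.} We first show that $\inf_{s\le t}x^n(s)=\underline{\phi}^n(\tau^n(t))$. Since $\tau^n$ is continuous and non-decreasing, $\tau^n([0,t])=[\tau^n(0),\tau^n(t)]$, so
\[
\inf_{s\le t}\phi^n(\tau^n(s))=\inf_{r\in[\tau^n(0),\tau^n(t)]}\phi^n(r)=\underline{\phi}^n(\tau^n(t)),
\]
where $\underline{\phi}^n$ is defined with base point $\tau^n(0)$. This gives $l^n=-\underline{\phi}^n\circ\tau^n$ and $\hat x^n=\phi^n\circ\tau^n-\underline{\phi}^n\circ\tau^n=\hat\phi^n\circ\tau^n$. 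The convergence $(\underline{\phi}^n,\tau^n)\to(\underline{\phi},\tau)$ in $D^\circ[0,T]$ is exactly the content of Corollary~\ref{cor:inf_sup}; it amounts to showing $\underline{\phi}^n\to\underline{\phi}$ uniformly on compacts. For this, note that $M_1$ convergence gives $\tau^n(0)\to\tau(0)$. Then, on a compact $[0,S]$,
\[
\bigl|\underline{\phi}^n(s)-\underline{\phi}(s)\bigr|\le\sup_{[0,S]}|\phi^n-\phi|+\Bigl|\inf_{[\tau^n(0),\tau^n(0)\vee s]}\phi-\inf_{[\tau(0),\tau(0)\vee s]}\phi\Bigr|,
\]
and the last term tends to $0$ uniformly in $s\le S$ by uniform continuity of $\phi$ on $[0,S+1]$. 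Since $\hat\phi^n=\phi^n-\underline{\phi}^n$, the convergence $\hat\phi^n\to\hat\phi$ uniformly on compacts follows, and hence $(\hat\phi^n,\tau^n)\to(\hat\phi,\tau)$ in $D^\circ[0,T]$.

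\textbf{Vanishing jumps.} If instead only $\sup_{t\in D(\tau^n)}\Delta\tau^n(t)\to0$, Corollary~\ref{cor:inf_sup} gives $\inf_{s\le\cdot}x^n_s=\underline{\phi}^n\circ\tau^n+\varepsilon^n$ with $\|\varepsilon^n\|_\infty\to0$. It follows that $l^n=-\underline{\phi}^n\circ\tau^n-\varepsilon^n$ and $\hat x^n=\hat\phi^n\circ\tau^n+\varepsilon^n$, so the same identifications hold up to a uniformly vanishing error. The $M_1$ convergence $l^n\to-\underline{\phi}\circ\tau$ is then the ``in particular'' part of Corollary~\ref{cor:inf_sup}, since a uniformly vanishing perturbation does not affect $M_1$ limits.

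\textbf{Jump ranges.} For the jump-range statement, apply Proposition~\ref{prop:cont_image_jump} to $(\hat\phi^n,\tau^n)\to(\hat\phi,\tau)$. This requires $\hat\phi^n$ continuous, which holds because a running infimum of a continuous function is continuous. This gives \eqref{eq:jump_range} for $\hat\phi^n\circ\tau^n$ with limiting range $I_t=\hat\phi([\tau(t-),\tau(t)])$. Adding $\varepsilon^n$ moves every point of the set $\hat x^n(B(t,\delta))$ by at most $\|\varepsilon^n\|_\infty$, so the Hausdorff distance changes by at most that amount, and the limit is unchanged.

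\textbf{Main obstacle.} I expect the only delicate point to be that $\underline{\phi}^n$ and $\underline{\phi}$ are anchored at the different base points $\tau^n(0)$ and $\tau(0)$. This is handled by the convergence $\tau^n(0)\to\tau(0)$ together with the uniform continuity estimate above.
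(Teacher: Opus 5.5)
Your proof is correct and follows essentially the paper's route: you establish uniform-on-compacts convergence of $\underline{\phi}^n$ and $\hat{\phi}^n=\phi^n-\underline{\phi}^n$ as in Corollary~\ref{cor:inf_sup}, then use Proposition~\ref{prop:cont_image_jump} for the ranges, where your direct bound (the Hausdorff distance moves by at most $\|\varepsilon^n\|_\infty$) replaces the paper's appeal to Corollary~\ref{cor:sum_hausdorff_range} and Remark~\ref{rem:nearly_cont_phi-n} applied to $(\varepsilon^n,\mathrm{Id})$. There are two harmless slips: the correct identity is $\hat{x}^n=\hat{\phi}^n\circ\tau^n-\varepsilon^n$ rather than $+\varepsilon^n$, and the supremum in your estimate should run over $[0,S\vee\sup_n\tau^n(0)]$; neither affects the argument.
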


\subsection{Decorated c\`adl\`ag paths}\label{sect:decorated_paths}

In \cite[Ch.~15]{whitt2002stochastic}, the notion of `decorated' c\`adl\`ag paths (or rather c\`adl\`ag paths with `excursions') was introduced and presented as a promising area for future research. However, further work on this topic has only recently emerged. In a setting closely related to our time-change setup, \cite{McCrickerd2021} proceeds similarly to \cite{whitt2002stochastic} but constructs a different pseudometric space of interval-valued paths. In a series of works on limit theorems for dynamical systems, \cite{FFT,FFMT,CKM} generalise the spaces introduced in \cite[Ch.~15]{whitt2002stochastic}, maintaining the c\`adl\`ag structure and constructing new topologies that allow for capturing more general aspects of the decorations.

Notably, \cite{CKM} recovers a Polish space, while this aspect is missing in the analysis of \cite{whitt2002stochastic, McCrickerd2021, FFT,FFMT}. The framework of the previous section is specific to time-changed processes, but it is simpler to work with, Polishness is automatic, and, for our purposes, it in fact encodes more information about the limit. To make precise that we are also capturing metric convergence to decorated c\`adl\`ag paths, we introduce a precise definition that closely resembles the presentation in \cite[Ch.~15]{whitt2002stochastic}. 

\begin{defn}[Decorated Skorokhod space]\label{def:decorated_Skorokhod} We write $\mathfrak{D}[0,T]$ for the space of pairs $(x,I)$, where $x\in D[0,T]$ and
	$I=(I_t)_{t\in[0,T]}$ is a family of compact
sets in $\mathbb{R}$ such that, for all $t\in[0,T]$, we have $x(t-), x(t)\in I_t$ with
\begin{equation}\label{eq:I_cadlag_prop}
		d_{\mathrm{H}}(I_s,\{x(t)\})\to 0\; \text{as}\;
		s\downarrow t \quad\text{and}\quad 
		d_\mathrm{H}(I_s,\{x(t-)\}) \to 0 \; \text{as} \;
		s\uparrow t.
	\end{equation}
	We call $x$ the \emph{base} path, while $I_t$ is the \emph{decoration} of $x$ at time $t$.
\end{defn}

From the definition, one readily sees that $(x,I)\in \mathfrak{D}[0,T]$ implies compactness of the graph  
\begin{equation}\label{eq:graph}
\Gamma(I):= \bigl\{(t,z)\in[0,T]\times\mathbb{R}:z\in I_t\bigr\}.
\end{equation}
Thus, we can metrize $\mathfrak{D}[0,T]$ by setting
\begin{equation}\label{eq:metric}
d_\mathfrak{D}\bigl((x,I),(x^\prime,I^\prime)\bigr)
:=\bar{d}_{\mathrm{H}}(\Gamma(I),\Gamma(I^\prime ))
\end{equation}
where $\bar{d}_{\mathrm{H}}$ is the
Hausdorff metric on the set of non-empty compact subsets of $[0,T]\times\mathbb{R}$ induced
by $d_\infty((t,l),(t^\prime,l^\prime))=|t-t^\prime|\lor |l - l^\prime|$. The main difference from \cite[Ch.~15]{whitt2002stochastic} is that we seek a pointwise characterisation of the convergence of c\`adl\`ag paths $x^n$ to a decorated limit $(x,I)$, without filling in the `gap' in the graph from $x^n(t-)$ from $x^n(t)$. Thus, we embed $D[0,T]$ in $\mathfrak{D}[0,T]$ by the minimal choice $\iota(x^n):=\{x^n(t-),x^n(t)\}$, identifying $x^n$ with $(x^n,\iota(x^n))$. This is a continuous embedding for the $J_1$ topology, but fails to be so for the other Skorokhod topologies and the $S$ topology.

\begin{prop}[Characterisation of decorated limits]\label{prop:pointwise_char_decorated} Consider $x^n \in D[0,T]$ and $(x,I)\in \mathfrak{D}[0,T]$. We have $x^n\rightarrow (x,I)$ in $\mathfrak{D}[0,T]$ if and only if
\begin{equation}\label{eq:decorated_char}
	\limsup_{n\rightarrow \infty} d_{\mathrm{H}}\bigl(\hspace{1.5pt}x^n\bigl(B(t,\delta)\bigr) ,\hspace{1.2pt}
	I_t \hspace{0.2pt}\bigr) \rightarrow 0\quad \text{as}\quad \delta \rightarrow 0,\quad \text{for all} \quad t\in[0,T].
\end{equation}
\end{prop}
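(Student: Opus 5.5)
We prove the two implications separately, working throughout with the graphs. Write $\Gamma_n:=\Gamma(\iota(x^n))=\{(t,z): z\in\{x^n(t-),x^n(t)\}\}$ and $\Gamma:=\Gamma(I)$. Both are compact: $\Gamma_n$ is the closure of the graph of $x^n$, and $\Gamma$ is compact as noted after Definition \ref{def:decorated_Skorokhod}. Hence $\bar d_{\mathrm H}(\Gamma_n,\Gamma)\to0$ is equivalent to two conditions. The first is $\sup_{p\in\Gamma_n} d_\infty(p,\Gamma)\to0$, which says that every limit point of points of $\Gamma_n$ lies in $\Gamma$. The second is $\sup_{q\in\Gamma}d_\infty(q,\Gamma_n)\to0$, which says that every point of $\Gamma$ is approximated by points of $\Gamma_n$. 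We will use the standard Kuratowski-type reformulation: for compacts in a bounded region, Hausdorff convergence is equivalent to the upper and lower closed limits both equalling $\Gamma$, together with uniform boundedness.

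\emph{($\Rightarrow$).} Assume $\bar d_{\mathrm H}(\Gamma_n,\Gamma)=:\eta_n\to0$ and fix $t$, $\delta$.

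For the upper inclusion, take $s\in B(t,\delta)$. Then $(s,x^n(s))\in\Gamma_n$ is within $\eta_n$ of some $(r,z)\in\Gamma$ with $|r-t|\le\delta+\eta_n$. Hence $x^n(B(t,\delta))$ lies in the $\eta_n$-neighbourhood of $J_t(\delta'):=\bigcup_{r\in B(t,\delta+\eta_n)}I_r$.

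The key lemma is that $J_t(\delta)\to I_t$ in Hausdorff distance as $\delta\to0$. One inclusion is trivial. The other follows from \eqref{eq:I_cadlag_prop}: for $r\downarrow t$ the sets $I_r$ collapse to $\{x(t)\}\subseteq I_t$, and for $r\uparrow t$ they collapse to $\{x(t-)\}\subseteq I_t$.

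For the lower inclusion, take $z\in I_t$. Then $(t,z)$ is within $\eta_n$ of some $(s,w)$ with $w\in\{x^n(s-),x^n(s)\}$. By taking left limits, points of $x^n(B(t,\delta))$ get within any $\epsilon$ of $w$ once $\eta_n<\delta$. Combining the two inclusions gives \eqref{eq:decorated_char}.

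\emph{($\Leftarrow$).} Assume \eqref{eq:decorated_char}. For each $t$ pick $\delta_t>0$ and $N_t$ such that $d_{\mathrm H}(x^n(B(t,\delta_t)),I_t)<\epsilon$ for all $n\ge N_t$. Shrinking $\delta_t$ if needed, the same lemma gives $d_{\mathrm H}(I_r,\,\cdot)$ control: $I_r$ lies within $\epsilon$ of $I_t$ for $r\in B(t,\delta_t)$. We also impose $\delta_t<\epsilon$.

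By compactness of $[0,T]$, finitely many intervals $(t_i-\delta_{t_i}/2,\,t_i+\delta_{t_i}/2)$ cover $[0,T]$; let $N=\max N_{t_i}$. Now take $n\ge N$.

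Any point $(s,x^n(s))$ has $s$ within $\delta_{t_i}/2$ of some $t_i$. Then $x^n(s)$ is $\epsilon$-close to $I_{t_i}$, so $(s,x^n(s))$ is $2\epsilon$-close to $\Gamma$. Left limits are handled by closure.

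Conversely, take $(r,z)\in\Gamma$ with $r$ near $t_i$. Then $z$ is $\epsilon$-close to $I_{t_i}$, which is $\epsilon$-close to $x^n(B(t_i,\delta_{t_i}))$. This produces a point of $\Gamma_n$ within $d_\infty$-distance at most $\max(2\delta_{t_i},2\epsilon)\le2\epsilon$ of $(r,z)$.

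Hence $\bar d_{\mathrm H}(\Gamma_n,\Gamma)\le 2\epsilon$ for $n\ge N$.

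The main obstacle is the semicontinuity lemma $\bigcup_{r\in B(t,\delta)}I_r\to I_t$. This requires the uniform (Hausdorff) version of \eqref{eq:I_cadlag_prop} on one-sided punctured neighbourhoods, and care at the endpoints $t=0$ and $t=T$, where only one side exists. The rest is a compactness covering argument.
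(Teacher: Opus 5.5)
Your proof is correct. The forward direction is essentially the paper's. Your semicontinuity lemma $\bigcup_{r\in B(t,\delta)}I_r\to I_t$ is the paper's observation \eqref{eq:1st_obs}, and your left-limit step is \eqref{eq:2nd_obs}.

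The converse is where you take a genuinely different route. The paper argues by contradiction. If points of $\Gamma(\iota(x^n))$ stay $\varepsilon$-far from $\Gamma(I)$, it extracts such points with times $s_n\to s_*$. If points of $\Gamma(I)$ stay $\varepsilon$-far from $\Gamma(\iota(x^n))$, it uses compactness of $\Gamma(I)$ to extract $(t_n,y_n)\to(t_*,y_*)\in\Gamma(I)$. In both cases it then applies \eqref{eq:decorated_char} only at the single accumulation time $s_*$ or $t_*$. You instead cover $[0,T]$ by finitely many windows of half-width $\delta_{t_i}/2$ and bound both one-sided Hausdorff excesses directly.

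For the first half, the two arguments do the same thing. For the second half, the paper gets from a nearby $I_r$ to a reference decoration through closedness of $\Gamma(I)$. You get there through \eqref{eq:1st_obs}, passing from $I_r$ to $I_{t_i}$. The paper's version is shorter and non-quantitative. Yours is constructive: it gives an explicit $N$ with $\bar d_{\mathrm H}(\Gamma(\iota(x^n)),\Gamma(I))\le 2\epsilon$ for all $n\ge N$.

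One ordering point must be fixed. You cannot shrink $\delta_t$ after fixing $N_t$, because $d_{\mathrm H}(x^n(B(t,\delta)),I_t)$ is not monotone in $\delta$. Approximating every point of $I_t$ by $x^n(B(t,\delta))$ gets harder as $\delta$ decreases. Instead, first choose $\delta_t$ below three thresholds:
\begin{itemize}
\item the threshold from \eqref{eq:1st_obs};
\item $\epsilon$;
\item the threshold from \eqref{eq:decorated_char}, which holds for all sufficiently small $\delta$.
\end{itemize}
Only then choose $N_t$.

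Your announced Kuratowski reformulation is never actually used. As stated it would also need uniform boundedness of the graphs. This is harmless, because your direct estimate $\sup_{p\in\Gamma(\iota(x^n))}d_\infty(p,\Gamma(I))\le 2\epsilon$ supplies that boundedness.
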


In the definition of $\mathfrak{D}[0,T]$, one can of course also consider $\mathbb{R}^d$-valued base paths $x$ with $I_t \subset \mathbb{R}^d$. The proof of Proposition \ref{prop:pointwise_char_decorated} extends verbatim to that setting.

Beyond \eqref{eq:metric}, particular topologies for decorated c\`adl\`ag paths (as in \cite[Ch.~15]{whitt2002stochastic} and \cite{FFT,FFMT,CKM}) amount to encoding specific information about the decorations, such as the order in which each decoration $I_t$ is traversed. While $\mathfrak{D}[0,T]$ can be considered the starting point, decorations must be distinguished accordingly, thus leading to refined spaces. For our analysis, the convergence of $(\phi^n,\tau^n)$ in $D^\circ[0,T]$ not only gives convergence of $x^n=\phi^n \circ \tau^n$ in $\mathfrak{D}[0,T]$ due to \eqref{eq:decorated_char}, it also specifies the exact traversal of $I_t=\phi([\tau(t-),\tau(t)])$ by the function $\phi$ on $[\tau(t-),\tau(t)]$ which is more information than any of the decorated topologies encode.

\section{Volterra clocks: existence and limit theorems}
\label{sec:main_results}

We now return to our analysis of Volterra clocks. As discussed in the introduction, we shall study weak existence and establish certain limit theorems, using the framework of the previous section. To further motivate this class of processes, we first provide some context about the affine case.
A Volterra square-root process $Y$ is a continuous nonnegative stochastic process, solving the stochastic Volterra equation 
\begin{equation}
\label{eq:Volterra_sqrt_intro}
Y_t = g_0(t) + \int_0^t\,K(t-s)\,\left(- \lambda \, Y_s + \nu\,\sqrt{Y_s}\,dB_s \right)\,, 
\end{equation}
where $B$ is a standard Brownian motion, and $K \in L^2_{loc}(\R_+\,, \R_+)$ is a convolution kernel. Such equations can be viewed as non-Markovian extensions of classical square-root diffusions, in which the history of the process influences its evolution through the memory kernel $K\,$. When the kernel is exponential, i.e.~of the form $ce^{-\lambda t}\,$, with $c ,\lambda \geq 0\,$, the process $Y$ is Markovian. However, this is no longer true for a general memory kernel. If the kernel is singular at zero, as it is the case for the fractional kernel $K(t) = c t^{\alpha - 1}\,$, with $c \geq 0$ and $1/2 < \alpha < 1\,$, then $Y$ is not even a semimartingale.

Processes with square-root noise term arise naturally in scaling limits of branching systems. The classical example is the Feller diffusion (in the Markovian case $K \equiv 1$), obtained as the diffusive limit of critical Galton-Watson processes. More generally, consider one-dimensional particles evolving as Markov processes with generator $\mathcal L$ and branching only in the presence of a catalyst located at the origin. In the critical branching regime, under appropriate rescaling, one obtains a limiting \textit{single-point catalytic superprocess} whose density $Y_t$ formally satisfies 
$$
\dot{Y_t} = \mathcal L Y_t + \delta_0\,\sqrt{Y_t}\,dB_t\,.
$$
If the semigroup associated with $\mathcal L$ corresponds to a convolution with a density $p_t(\cdot)\,$, the mild formulation at $x = 0$ reads
\begin{equation}
\label{eq:catalytic_Volterra_sqrt}
Y_t(0) = \int_{\R}\,p_t(-y)\,Y_0(y)\,dy + \int_0^t\,p_{t-s}(0)\,\sqrt{Y_s(0)}\,dB_s\,,
\end{equation}
so that $Y_t(0)\,$, the particle density at the catalyst location, satisfies a Volterra square-root equation with kernel $t \mapsto p_t(0)\,$. These processes have been widely studied in the case of Brownian particles, leading to the \textit{single-point catalytic super-Brownian motion} of \cite{dawson_super-brownian_1994}, see also \cite{fleischmann1995new, dawson1995singularity}, and \cite{zahle_space-time_2005, mytnik_uniqueness_2015} for the SPDE viewpoint. More general single-point catalytic processes have also been considered, such as symmetric $\alpha$-stable particles with generator $\mathcal L = - (-\Delta)^{\alpha / 2}\,$, $1 < \alpha \leq 2\,$, or stricly elliptic diffusions, i.e.~$\mathcal L = \mu(\cdot)\cdot \nabla + \frac{1}{2}\sigma^2(\cdot)\cdot \Delta\,$, see  \cite{dawson_critical_1991,klenke_absolute_2000, wang_fluctuation_2014}, leading to various memory kernels.

Volterra square-root processes are also widely used in mathematical finance. In particular, equation \eqref{eq:Volterra_sqrt_intro} is satisfied by the variance in Volterra Heston stochastic volatility models, see \cite{el2019characteristic, abi2019affine, abi2021weak}. In this context, they have been obtained as scaling limits of nearly unstable Hawkes processes in \cite{jaisson2016rough, jusselin2020no}.
\\

In several examples the kernel $K$ is not square-integrable, but only in $L^1_{loc}(\R_+\,, \R_+)\,$. This is the case for the single-point catalytic super-Brownian motion, where $p_t(0) \propto t^{-1/2}$ in \eqref{eq:catalytic_Volterra_sqrt}. The stochastic convolution in \eqref{eq:Volterra_sqrt_intro} is then no longer defined and the process $V$ cannot be constructed directly. Nevertheless, the integrated process $X_t := \int_0^t\,Y_s\,ds$ remains well-defined and, fortunately, still captures the affine structure as it satisfies the autonomous equation \eqref{eq:introbarVaffine}.
\\

The class of stochastic Volterra clocks we introduce in this section aims to generalise this autonomous equation by introducing a time-change function $f\,$, leading to
\begin{equation}
\label{eq:Volterra_clock}
X_t = G_0(t) + \int_0^t\,K(t-s) \,\left(-\lambda X_s + \nu\,W_{f(X_s)}\right)\,ds\,.
\end{equation}
We establish weak existence of a non-decreasing solution $X$ to \eqref{eq:Volterra_clock} under suitable assumptions in Section \ref{subsec:new_processes}, and weak convergence to Brownian first crossing pure-jump processes in Section \ref{subsec:weak_convergence}.

\subsection{Weak existence}
\label{subsec:new_processes}
Before stating our weak existence results, we introduce the main assumptions and definitions used throughout the paper. In particular, we mainly focus on \textit{completely monotone} memory kernels. 

\begin{defn}
\label{defn:cm}
    $K : \R_+ \to \R_+$ is said to be completely monotone if it is infinitely differentiable on $(0\,, + \infty)\,$, and 
    $$
    (-1)^n\,K^{(n)} \geq 0\,, \quad n \geq 0\,.
    $$
\end{defn}
\noindent
This is broad class which contains exponentially decaying kernels $K(t) = c\,e^{-\lambda t}\,$, with $c, \lambda \geq 0\,$, fractional kernels $K(t) = c \,t^{\alpha - 1}\,$, with $c \geq 0$ and $\alpha \leq 1\,$, and gamma kernels $K(t) = c\,e^{-\lambda t}\,t^{\alpha - 1}\,$, with $c\,, \lambda \geq 0$ and $\alpha \leq 1\,$. For a given kernel $K \in L^1_{loc}(\R_+\,, \R_+)\,$, we define a set of admissible input curves $g_0\,$, for which we will be able to prove existence results.
\begin{defn}
    \label{defn:input_curves}
    For $K \in L^1_{loc}(\R_+\,, \R_+)$ we define $\mathcal G_{K}$ as the set of function $g_0 : \R_+ \to \R_+\,$, of the form 
    $$
    g_0(t) = a(t) + \int_0^t K(t-s)\,b(s)\,ds\,, \quad t\geq 0\,,
    $$
    where $a\,, b: \R_+ \to \R_+\,$, $a$ is non-decreasing and $b$ is locally bounded, measurable.
\end{defn}
\noindent
Finally, we will require that our time-change function $f$ satisfies the following assumption.
\begin{assumption}
    \label{assumption:f}
    $f \in \mathcal C^1(\R_+\,, \R_+)\,$, is non-decreasing, starts from zero, and its derivative $f'$ has at most linear growth, i.e.~$|f'(x)| \leq c(1 + x)\,$, $x \geq 0\,$, for some $c \geq 0\,$.
\end{assumption}
\noindent In particular, under Assumption \ref{assumption:f}, $f$ has at most quadratic growth.\\

The previous assumptions are precisely those needed to obtain the following weak existence result.
\begin{thm}
    \label{thm:weak_existence_Volterra_clock}
    Let $\lambda\,, \nu \geq 0\,, T > 0\,$. We consider $K \in L^1_{loc}(\R_+\,, \R)$ a completely monotone kernel, $g_0 \in \mathcal G_K\,$, and $f : \R_+ \to \R_+$ satisfying Assumption \ref{assumption:f}. Then, there exists a filtered probability space $\left(\Omega\,, \mathcal F\,, \left(\mathcal F_t\right)_{t \leq T}\,, \mathbb P\right)$ satisfying the usual conditions and adapted continuous processes $\left(X\,, M\right)\,$, starting from $0\,$, such that $X$ is non-decreasing, $M$ is a martingale with quadratic variation $\langle M\rangle_t = f(X_t)\,$, and 
    \begin{equation}
    \label{eq:dynamic_bar_V}
    X_t = G_0(t) + \int_0^t\,K(t-s)\,\left( - \lambda\,X_s + \nu\,M_s\right)\,ds \,, \quad t \leq T\,,
    \end{equation}
    where $G_0 := \int_0^{\cdot}\,g_0(s)\,ds\,$.
\end{thm}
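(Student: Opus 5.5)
We prove weak existence by approximating the kernel by square-integrable ones, solving the associated equation for a density $Y$, and passing to the limit at the level of the integrated clock $X$, where only $L^1_{loc}$ control of $K$ is needed. Since $K$ is completely monotone, it can be approximated by completely monotone kernels $K^m\in L^2_{loc}$, for instance $K^m(t):=K(t+1/m)$; these are bounded, completely monotone, and satisfy $K^m\to K$ in $L^1_{loc}$. The input curve is approximated correspondingly by $g_0^m:=a+K^m*b\in\mathcal G_{K^m}$.

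\textbf{Step 1: approximating equation.} For fixed $m$ we solve the path-dependent Volterra equation \eqref{eq:square-integrable_case} with kernel $K^m$:
\[
Y_t=g^m_0(t)+\int_0^tK^m(t-s)\bigl(-\lambda Y_s\,ds+\nu\sqrt{f'(X_s)Y_s}\,dB_s\bigr),\qquad X_t=\int_0^tY_s\,ds .
\]
Nonnegativity of $Y$ should follow as in the Volterra square-root theory of \cite{abi2019affine,abi2021weaksolution}, since complete monotonicity (via the resolvent results of Appendix \ref{app:resolvents}) preserves positivity for inputs in $\mathcal G_{K^m}$. Weak existence of a continuous nonnegative solution can be obtained by the standard route: truncate the coefficients, construct a solution by an Euler scheme, derive Kolmogorov-type moment bounds, apply tightness, and then remove the truncation. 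For the last step we use the bound $f'(x)\le c(1+x)$ from Assumption \ref{assumption:f}, which gives sublinear growth of $\sqrt{f'(X)Y}$ in $(X,Y)$, together with a Gronwall argument for Volterra equations yielding $\sup_{t\le T}\mathbb E[Y_t^p]<\infty$. Setting $M^m:=\int_0^\cdot\sqrt{f'(X^m_s)Y^m_s}\,dB_s$, we get $\langle M^m\rangle=\int_0^\cdot f'(X^m_s)\,dX^m_s=f(X^m)$ by the chain rule and $f(0)=0$. Integrating in time and applying stochastic Fubini then gives exactly \eqref{eq:dynamic_bar_V} with $K^m$ and $G_0^m=\int_0^\cdot g_0^m$. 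The clock $X^m$ is non-decreasing because $Y^m\ge0$.

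\textbf{Step 2: uniform bounds and tightness.} From \eqref{eq:dynamic_bar_V}, BDG gives $\mathbb E\sup_{s\le t}|M^m_s|^2\lesssim\mathbb E f(X^m_t)\lesssim 1+\mathbb E (X^m_t)^2$. Using $K^m\le K$ (by monotonicity of $K$) and $X^m\ge0$, we obtain $X^m_t\le G^m_0(t)+\nu\int_0^tK(t-s)\sup_{r\le s}|M^m_r|\,ds$. Taking second moments and applying a Volterra–Gronwall inequality with the $L^1$ kernel $K$ yields $\sup_m\mathbb E[\sup_{t\le T}(X^m_t)^2]<\infty$.

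Tightness of $M^m$ in $C[0,T]$ follows from the Aldous criterion, or from DDS together with the tightness of $f(X^m)$. Equicontinuity of $X^m$ comes from the representation
\[
X^m_{t+h}-X^m_t=\bigl(G^m_0(t+h)-G^m_0(t)\bigr)+\int\bigl(K^m(t+h-s)-K^m(t-s)\bigr)(\cdots)\,ds+\int_t^{t+h}K^m(t+h-s)(\cdots)\,ds,
\]
bounded via the uniform-in-$m$ $L^1$-modulus of continuity of $K^m$, which holds because $K^m\to K$ in $L^1$. This is the main obstacle: $K$ is only $L^1$, so the continuity estimates for $X^m$ must avoid any $L^2$ norm of the kernel and control $\sup|M^m|$ only in $L^1(\Omega)$ or in probability. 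If moments are delicate, we would argue pathwise with tightness in probability instead.

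\textbf{Step 3: passage to the limit.} By Skorokhod representation, $(X^m,M^m)\to(X,M)$ almost surely in $C[0,T]^2$. Then $X$ is continuous, non-decreasing and starts from $0$. The convolution terms converge because $K^m\to K$ in $L^1$ and the integrands converge uniformly, so \eqref{eq:dynamic_bar_V} passes to the limit. Finally, $M$ is a martingale with $M^2-f(X)$ a martingale, with respect to the filtration generated by $(X,M)$ (suitably augmented). This follows from uniform integrability, ensured by the moment bounds (raised to order $p>2$ if needed), together with the standard characterisation of martingales through $\mathbb E[\Phi(X,M)_{\le s}(M_t-M_s)]=0$ and the analogous identity for $M^2-f(X)$, which pass to the limit under a.s. convergence. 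Hence $\langle M\rangle=f(X)$, which completes the construction.
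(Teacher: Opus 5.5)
Your proposal is correct and follows essentially the same route as the paper. Both use the shifted kernels $K(\cdot+1/m)$ and the nonnegative density $Y$ of the $L^2$ problem, apply stochastic Fubini to get the clock equation with $\langle M^m\rangle=f(X^m)$, obtain uniform second-moment bounds via Jensen, BDG and a Volterra--Gr\"onwall inequality, derive C-tightness from the increment bound, and pass to the limit by Skorokhod representation and uniform integrability. The differences are minor: you use $K^m\le K$ to run Gr\"onwall with a fixed kernel, where the paper keeps $K^m$ and invokes $L^1$-continuity of resolvents; and you identify $\langle M\rangle=f(X)$ from the martingale property of $M^2-f(X)$ using $p>2$ moments, where the paper cites Jacod--Shiryaev's Corollary VI.6.29.
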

\begin{proof}
    The proof is given in Section \ref{subsec:proof_existence}.
\end{proof}
Compared to the affine Volterra framework (see \cite{abi2019affine, abi2021weak}), where $f$ is a linear function, we allow $f$ to be up to quadratic (since its derivative has linear growth from Assumption \ref{assumption:f}). The well-posedness of this extension can be understood as follows. Applying the BDG inequality to the martingale $M\,$, we have 
$$
\mathbb E \left[\sup_{s \leq t}\,|M_s|^p \right] \leq C_p\,\mathbb E \left[f(X_t)^{p / 2}\right]
$$
for any $p \geq 1\,$, hence we can expect to obtain closed Volterra-type inequalities on the moments $X$ via \eqref{eq:dynamic_bar_V}, if the growth of $f$ is up to quadratic. To the best of our knowledge, this margin with respect to the affine case has not yet been exploited in the literature. Furthermore, it appears that estimates on $X$ become difficult to obtain whenever $f$ is more than quadratic, and weak existence of a solution to \eqref{eq:dynamic_bar_V} is unlikely.

In the case where $K \in L^2_{loc}(\mathbb R_+\,, \R_+)\,$, as discussed in the introduction, we can apply the stochastic Fubini theorem to obtain that $X_t = \int_0^t\,Y_s\,ds\,$, where $Y \geq 0$ solves the stochastic Volterra equation \eqref{eq:square-integrable_case}. Similarly to the affine case, when the kernel $K$ is no longer locally square-integrable, nothing guarantees that $X$ admits a density with respect to the Lebesgue measure, and only \eqref{eq:dynamic_bar_V} is well-defined (see \cite{dawson_super-brownian_1994, jaisson2016rough, abi2021weak}).
\begin{rem}
\label{rem:interpretation_Volterra_clock}
    The extension \eqref{eq:square-integrable_case} admits several natural interpretations. In the context of branching or Hawkes-type processes, it corresponds to a branching intensity depending on the accumulated activity $X_t$. For instance, one may model catalytic mechanisms with fatigue or self-excitation depending on cumulative occupation time. In population dynamics, it can represent environmental feedback depending on the total accumulated population. In mathematical finance, when $Y$ represents the variance of a stock price, the factor $\sqrt{f'(X_t)}$ captures the idea that past volatility spikes (corresponding to crises) may amplify future fluctuations.
\end{rem}

\begin{rem}
    Note that \eqref{eq:square-integrable_case} is different from the general stochastic Volterra equation
    $$
    Y_t = g_0(t) + \int_0^t\,K(t-s)\,\left(- \lambda\, Y_s\,ds + \sigma(Y_s)\,dW_s\right)\,,
    $$
    in that it contains the non-Markovian factor $f'(X_s)\,$, where $X := \int_0^{\cdot}\,Y_s\,ds\,$.
\end{rem}

In view of the functional convergence framework developed in Section \ref{sect:func_conv_frame}, and of the hitting time limits we aim to obtain, we want to exploit the specific form of the quadratic variation of $M$ in Theorem \ref{thm:weak_existence_Volterra_clock} to represent it as a time-changed Brownian motion $W_{f(X)}$ via the Dambis--Dubins--Schwarz theorem (specifically Proposition \ref{prop:DDS}), leading to \eqref{eq:Volterra_clock}.

\begin{defn}
\label{defn:weak_sol_Volterra clock}
    We say that $(W\,, X)$ is a weak solution to \eqref{eq:Volterra_clock} on $[0\,,T]$ for the input $(G_0\,, K\,, f\,,\lambda\,, \nu)$ if there exists a filtered probability space $\left( \Omega\,, \mathcal F\,, (\mathcal G_t)_{t \geq 0}\,, \mathbb P\right)$ supporting a $(\mathcal G_t)_{t \geq 0}$-Brownian motion $W\,$, and a non-decreasing continuous process $(X_t)_{t \leq T}\,$, starting from $0\,$, such that:
\begin{enumerate}[label=(\roman*)]
    \item For every $t\in[0,T]\,$, $f(X_t)$ is a $(\mathcal G_t)_{t \geq 0}$-stopping time.
    \item $(W_{f(X_t)})_{t\leq T}$ is a martingale with respect to its own filtration.
    \item The Volterra equation \eqref{eq:Volterra_clock} holds $\mathbb P$-almost surely on $[0\,,T]\,$.
\end{enumerate}
\end{defn}
 
In the setup of Theorem \ref{thm:weak_existence_Volterra_clock}, we can make use of the Dambis--Dubins-Schwarz representation given by Proposition \ref{prop:DDS} with $(X\,, M\,, A) := (X\,, M\,, f(X))$ to obtain the following weak existence result.
\begin{thm}
    \label{thm:weak_existence_Volterra_clock_DDS}
    Let $\lambda\,$, $\nu\,$, $T\,$, $K\,$, $G_0$ and $f$ be as in Theorem \ref{thm:weak_existence_Volterra_clock}. Then, there exists a weak solution $(W\,, X)$ to \eqref{eq:Volterra_clock} on $[0\,,T]$ for the input $(G_0\,, K\,,f\,, \lambda\,, \nu)\,$, in the sense of Definition \ref{defn:weak_sol_Volterra clock}.
\end{thm}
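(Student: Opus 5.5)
The plan is to start from the weak solution $(X,M)$ of Theorem \ref{thm:weak_existence_Volterra_clock} on a filtered space $(\Omega,\mathcal F,(\mathcal F_t)_{t\le T},\mathbb P)$. There $X$ is continuous, non-decreasing and starts at $0$, and $M$ is a continuous martingale with $\langle M\rangle = f(X)$. Set $A := f(X)$. Because $f$ is $\mathcal C^1$, non-decreasing and $f(0)=0$ by Assumption \ref{assumption:f}, $A$ is continuous, non-decreasing, adapted and starts from $0$. We then invoke the Dambis--Dubins--Schwarz representation of Proposition \ref{prop:DDS} with the triple $(X,M,A)=(X,M,f(X))$. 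This should produce, possibly after enlarging the probability space by an independent Brownian motion to handle the event $\{A_T<\infty\}$ where $A$ stops growing, a filtration $(\mathcal G_s)_{s\ge0}$ and a $(\mathcal G_s)$-Brownian motion $W$ such that $M_t = W_{A_t}=W_{f(X_t)}$ for $t\le T$ a.s.

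The filtration should be the time-changed one, $\mathcal G_s := \mathcal F_{C_s}$, where $C_s:=\inf\{t : A_t > s\}\wedge T$. On the enlarged space it is joined with the filtration of the auxiliary Brownian motion, which is needed beyond $A_T$. Each of the three requirements of Definition \ref{defn:weak_sol_Volterra clock} is then checked in turn.

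Condition (i) asks that $f(X_t)=A_t$ be a $(\mathcal G_s)$-stopping time for each $t$. This holds because $\{A_t\le s\}=\{C_s\ge t\}$ up to the usual right-continuity conventions, and this event lies in $\mathcal F_{C_s}=\mathcal G_s$. Condition (ii) is immediate: $W_{f(X_\cdot)}=M$ is an $(\mathcal F_t)$-martingale, so by the tower property it is a martingale in its own smaller filtration. Condition (iii) follows by substituting $M_s=W_{f(X_s)}$ into \eqref{eq:dynamic_bar_V}, which holds a.s. on $[0,T]$, and this gives \eqref{eq:Volterra_clock}. Throughout, $X$ is the same continuous, non-decreasing process starting from $0$.

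The main obstacle is the precise form of the DDS step. We need $W$ to be a Brownian motion for a filtration under which every $A_t$ is a stopping time, and not merely for its natural filtration. The enlargement must also preserve the martingale property, and the finiteness of $A_T$ must be handled. Since these technicalities are exactly what Proposition \ref{prop:DDS} in Appendix \ref{app:DDS} is designed for, the remaining task is to verify its hypotheses: continuity and adaptedness of $A$, $\langle M\rangle=A$, and $A_0=0$. All of these follow directly from Theorem \ref{thm:weak_existence_Volterra_clock} and Assumption \ref{assumption:f}.
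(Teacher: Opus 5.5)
Your proposal is correct and follows the paper's route. The paper applies Proposition \ref{prop:DDS} to the triple $(X,M,f(X))$ from Theorem \ref{thm:weak_existence_Volterra_clock} and reads off conditions (i)--(iii) of Definition \ref{defn:weak_sol_Volterra clock} from its three conclusions; your filtration $\mathcal F_{C_s}$, joined with an independent Brownian filtration, is the paper's $\mathcal G_s=\mathcal F_{\tau_s}\otimes\mathcal F'_s$.

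Two small wording fixes. The enlargement is not needed to handle an event $\{A_T<\infty\}$, which has full probability; it is needed because the clock only reaches the finite level $A_T$, so $W$ must be continued beyond it. Also, the three properties are established for the equal-in-law copy $(\widetilde X,\widetilde M,\widetilde A)$ on the enlarged space, not for the original $(X,M)$.
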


\subsection{Weak convergence to Brownian first crossing jump processes}
\label{subsec:weak_convergence}
By introducing the time-change $f(X)$ in the Volterra clock dynamics \eqref{eq:Volterra_clock}, we expect to obtain a broader class of limiting Brownian boundary-crossing problems. Recall that the limit we study in the present work corresponds to the degeneracy of the convolution kernel $K$ to the Dirac measure $\delta_0(dt)\,$. We will see in Section \ref{subsec:applications} that this framework jointly describes fast, large-time and hyper-rough scalings of Volterra clocks. In particular, it yields new convergence results even in the affine case $f(x) = x\,$.
\\

Formally replacing $K(\cdot)\,ds$ by a Dirac mass at zero in \eqref{eq:Volterra_clock} leads to the pointwise equation
\begin{equation}
\label{eq:hitting_time_equation}
X^*_t = G_0(t) - \lambda \,X^*_t + \nu\,W^*_{f(X^*_t)}\,, \quad t \leq T\,.
\end{equation}
We first show that this is indeed true in the limit, and we then employ a martingale argument to establish that $s=X^*_t$ is in fact the first time that the trajectory $s\mapsto (1+ \lambda) \,s - \nu\,W^*_{f(s)}$ crosses above $G_0(t)\,$, giving
\begin{equation}
\label{eq:first_hitting_time_equation}
X^*_t = \inf\{ s \geq 0\,:\, (1 + \lambda) s - \nu W^*_{f(s)} > G_0(t)\}\,, \quad t \leq T\,.
\end{equation}
Both parts rely, in different ways, on the functional framework of Section \ref{sect:func_conv_frame}. In the affine case $f(x) = x\,$ with $G_0(t)=ct$, $X^*$ is an Inverse Gaussian L\'evy process which was obtained as the limit of specific models in \cite{jaber2024reconciling, abi2025hyper}. \\

Consider a sequence of weak solutions $(W^n,X^n)$ for given kernels $K^n$. The conjectured weak convergence cannot be studied in Skorokhod's classical $J_1$ topology, as it does not allow continuous functions to tend to discontinuous limits. Since the clocks $X^n$ are non-decreasing, the convergence is naturally handled by the weaker $M_1$ topology (recall Section \ref{sect:time-changed_func_framework}). However, passing to the limit in the Volterra equation also requires us to address the weak convergence of the martingale part $M^n_t = W^n_{f^n(X^n_t)}\,$ which is handled natively within the framework for $D^\circ[0,T]$ developed in Section \ref{sect:time-changed_func_framework}.  In the affine setting of \cite{abi2025hyper}, the convergence of $M^n$ was instead studied in the weaker $S$-topology, and the limit of $X^n$ was identified via the characteristic functional, which is known semi-explicitly thanks to the affine structure. In the present setting, the characteristic functional is no longer available, but we succeed in identifying the limit directly from the dynamics. Exploiting the time-changed structure of $M^n$ as an element of $D^\circ[0,T]$, we prove that it converges to $M^* = W^*_{f(X^*)}\,$ and establish that $M^*$ is a martingale, a fact which is pivotal for obtaining \eqref{eq:first_hitting_time_equation}. The martingale property of $M^*$ is confirmed in Proposition \ref{prop:martingality_limit} via the precise control offered by $D^\circ[0,T]$, while it was only obtained from uniform integrability via $L^2$-control in \cite{abi2025hyper} which we do not have here. In fact, $X^*_t$ need not have finite second order moment in the case of a square-root boundary; see in particular \cite{novikovStoppingTimesWiener1971}, corresponding to $f$ quadratic.\\

Based on the above ideas, we arrive at the following functional limit theorem for the Volterra clocks $X^n$ when sending the kernel $K$ to a Dirac mass at the origin.
\begin{thm}
    \label{thm:weak_convergence}
    Let $\left(W^n\,, X^n\right)_{n \geq 0}$ be a sequence of weak solutions to \eqref{eq:Volterra_clock} on $[0\,,T]\,$, for the sequence of inputs $(G_0^n\,, K^n\,, f^n\,, \lambda^n\,, \nu^n)_{n \geq 0}\,$, where $\lambda^n\,, \nu^n \geq 0\,$, $f^n$ satisfies Assumption \ref{assumption:f}, $K^n \in L^1_{loc}(\R_+\,, \R_+)$ and $G_0^n : \R_+ \to \R_+$ is non-decreasing with $G_0^n(0) = 0\,$, for each $n \geq 0\,$. Suppose that:
    \begin{itemize}
        \item $K^n(t)\,dt \Rightarrow \delta_0(dt)$ weakly on $[0\,,T]\,$.
        \item $G_0^n \to G_0$ pointwise on $[0\,,T]\,$, where $G_0$ is continuous.
        \item $f^n \to_{l.u.} f$ on $\R_+$, with $\sup_{n \geq 0} \sup_{x \geq 0} f^n(x) / (1 + x^2) < + \infty\,$.
        \item $\lambda^n \to \lambda$ and $\nu^n \to \nu\,$.
    \end{itemize}
    Then, we have the weak convergence of processes 
    $$
    \left(W^n\,, X^n \right) \overset{n \to \infty}{\implies} \left(W^*\,, X^*\right)
    $$
    on $D^{\circ}[0\,,T]\,$, where $W^*$ is a standard Brownian motion and 
    $$
    X^*_t = \inf\{s \geq 0\,:\, (1+ \lambda)s - \nu\,W^*_{f(s)} > G_0(t)\}\,, \quad t \leq T\,.
    $$
\end{thm}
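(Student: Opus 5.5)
The plan has four steps: tightness, passing to the limit in the equation, martingality of the limit, and identification of $X^*$ as a first passage time.

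\textbf{Tightness.} We need tightness of $(W^n,X^n)$ in $D^\circ[0,T]$. The laws of $W^n$ on $C[0,\infty)$ are all Wiener measure, so only $X^n$ in $(D^\uparrow[0,T],M_1)$ needs work. Since $X^n$ is non-decreasing, the characterisation of $M_1$ convergence for monotone paths (pointwise convergence on a dense set containing $0,T$) reduces this to tightness of $X^n_T$, i.e.\ a uniform bound on $\mathbb{P}(X^n_T>R)$. For this bound we use $X^n\ge0$, $K^n\ge0$ and the equation at $t=T$:
\[
X^n_T\le G^n_0(T)+\nu^n\sup_{s\le T}|W^n_{f^n(X^n_s)}|\int_0^T K^n.
\]
Here $\int_0^T K^n\to1$. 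The supremum is at most $\sup_{u\le f^n(X^n_T)}|W^n_u|$, which grows only like $\sqrt{f^n(X^n_T)}\lesssim 1+X^n_T$. Because the constant $\nu^n\int_0^T K^n$ stays bounded but need not be below one, linear growth alone does not close the estimate. We therefore localise with stopping times $\theta_R$ (the first time $f^n(X^n)$ exceeds $R$) and use the law of the iterated logarithm / Brownian scaling: on $\{X^n_T>x\}$ the Brownian motion must satisfy $\sup_{u\le f^n(x)}|W_u|\gtrsim x$. For $f$ quadratic this event has probability bounded away from one but not vanishing, so this point needs care. The first thing I would try is a moment bound in the spirit of the existence proof, namely the BDG-based Volterra inequality applied to $\mathbb{E}[(X^n_T)^p]$ for small $p<1$, uniformly in $n$ via resolvent bounds. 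By Skorokhod representation we then obtain a.s.\ convergence $(W^n,X^n)\to(W^*,X^*)$ in $D^\circ[0,T]$.

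\textbf{Passing to the limit in the equation.} By Proposition \ref{prop:conv_compositions} (with Remark \ref{rem:nearly_cont_phi-n}, using $f^n\to_{\text{l.u.}}f$), the processes $Z^n:=-\lambda^n X^n+\nu^n W^n_{f^n(X^n)}$ are uniformly bounded and converge locally uniformly to $Z^*:=-\lambda X^*+\nu W^*_{f(X^*)}$ off the countable set $D(X^*)$. For $t\notin D(X^*)$ we split
\[
\int_0^t K^n(t-s)Z^n_s\,ds=\int_{t-\delta}^t K^n(t-s)Z^n_s\,ds+\int_0^{t-\delta}K^n(t-s)Z^n_s\,ds .
\]
The second piece vanishes because $K^n\,ds\Rightarrow\delta_0$ and $Z^n$ is bounded. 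The first piece tends to $Z^*_t$ by local uniform convergence at $t$. This yields \eqref{eq:hitting_time_equation} on a dense set, and hence everywhere by right-continuity.

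\textbf{Martingality of the limit.} Set $M^*=W^*_{f(X^*)}$. The times $f(X^n_t)$ are $\mathcal{G}^n$-stopping times and $W^n_{f(X^n_\cdot)}$ is a martingale. For continuity points $s_1<\dots<s_k\le s<t$ and bounded continuous $h$ we pass to the limit in
\[
\mathbb{E}\bigl[(M^n_t-M^n_s)\,h(M^n_{s_1},\dots,X^n_{s_k})\bigr]=0
\]
using pointwise convergence at non-jump times. Uniform integrability cannot come from $L^2$ bounds, so we truncate: we replace $M^n$ by $W^n_{f(X^n)\wedge R}$, which is bounded and still a martingale by optional stopping. We then let $R\to\infty$ using that $f(X^*_T)<\infty$ a.s. A further step, again relying on the $D^\circ$ convergence of the pair, shows that $f(X^*_t)$ are stopping times for a filtration in which $W^*$ is Brownian. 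This makes $W^*$ Brownian relative to the limiting time-change structure.

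\textbf{Identification as first passage (main obstacle).} Let $\Phi(s)=(1+\lambda)s-\nu W^*_{f(s)}$, so the equation reads $\Phi(X^*_t)=G_0(t)$. It remains to rule out that $\Phi$ has already exceeded $G_0(t)$ before $X^*_t$. Suppose $\Phi(u)>G_0(t)$ for some $u<X^*_t$. By monotonicity and continuity of $G_0$, $u$ lies in a jump interval $[X^*_{r-},X^*_r]$ with $r\le t$. Proposition \ref{prop:cont_image_jump} with Corollary \ref{cor:inf_sup} transfers this to the prelimit: the continuous clock $X^n$ traverses that range near time $r$, and there the equation forces $\Phi^n\le G^n_0+o(1)$, since otherwise the Volterra average would push $X^n$ back. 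I expect this to be the hardest step. The argument will combine the martingale/stopping property with a continuity-of-$G_0$ squeeze, so that a strict overshoot inside a jump contradicts the prelimit dynamics. In addition, the Brownian strong Markov property at $f(X^*_{t})$ shows that $\Phi$ immediately crosses strictly above $G_0(t)$, which yields the infimum formula \eqref{eq:first_hitting_time_equation} and uniquely identifies the law of the limit.
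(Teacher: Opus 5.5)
Your overall architecture (tightness, limit equation, martingality of $M^*$, identification) is the paper's, and your passage to the limit in the equation is essentially Lemma~\ref{lemma:limit_deterministic_equation}. However, the proposal is missing the estimate on which all the other steps rest, namely Lemma~\ref{lem:M1_tightness_time_changed}. Write out $X^n_t-X^n_s$ from \eqref{eq:Volterra_clock}. The $\lambda^n$-terms are nonpositive, since $K^n\ge 0$ and $X^n$ is non-decreasing, and the $M^n$-terms are centred. So taking expectations gives $\mathbb{E}[X^n_t-X^n_s]\le G^n_0(t)-G^n_0(s)$.

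Your pathwise bound $X^n_T\le G^n_0(T)+\nu^n\sup_{s\le T}|M^n_s|\int_0^T K^n$ discards exactly this cancellation, and the substitutes you suggest do not close:
\begin{itemize}
\item Volterra--Gr\"onwall/resolvent bounds for $u\le C+c\,K^n*u$ are not uniform in $n$ as $K^n\to\delta_0$ once $c\ge 1$. For example, with $K^n=ne^{-nt}$ the resolvent of $-cK^n$ is $-cne^{(c-1)nt}$.
\item A small-$p$ BDG inequality $\mathbb{E}[(X^n_T)^p]\le C+c_p\,\mathbb{E}[(X^n_T)^p]$ only closes if $c_p<1$.
\item No uniform $L^2$ bound can hold, since for quadratic $f$ the limit $X^*_t$ may have infinite second moment.
\end{itemize}
Moreover, a bound on $X^n_T$ alone does not give $M_1$-tightness of monotone paths: $\min(nt,1)$ has no $M_1$-convergent subsequence in $D[0,T]$. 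Also, your right-continuity extension of the limit equation does not reach $t=T$ unless $X^*_T=X^*_{T-}$. The expectation bound handles both endpoints: $\limsup_n\mathbb{E}[X^n_\delta]\le G_0(\delta)$ and $\limsup_n\mathbb{E}[X^n_T-X^n_{T-\delta}]\le G_0(T)-G_0(T-\delta)$, both of which vanish as $\delta\to0$ by continuity of $G_0$.

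The same bound is what makes your last two steps work, and both have gaps without it.

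\textbf{Martingality.} Truncating at $f^n(X^n)\wedge R$ is fine; these processes are $L^2$-bounded rather than bounded. But letting $R\to\infty$ ``using $f(X^*_T)<\infty$ a.s.'' does not preserve the martingale property: Brownian motion evaluated at an a.s.\ finite stopping time need not be centred (take the hitting time of level $1$). You need uniform integrability. The paper gets it from BDG with $p=1$ and $\sqrt{f^n(x)}\lesssim 1+x$, giving $\sup_n\mathbb{E}[\sup_{t\le T}|M^n_t|]\lesssim 1+\sup_n\mathbb{E}[X^n_T]<\infty$. Fatou through Corollary~\ref{cor:inf_sup} then yields $\mathbb{E}[\sup_{u\le f(X^*_T)}|W^*_u|]<\infty$ (Proposition~\ref{prop:martingality_limit}).

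\textbf{Identification.} Your pathwise exclusion of overshoots (``the Volterra average would push $X^n$ back'') is not carried out, and it is far from clear for kernels that are merely nonnegative. The strong Markov property at $f(X^*_t)$ also presupposes a stopping-time property you only assert. The paper sidesteps both. Set $U^*_t:=\inf\{s\ge0:(1+\lambda)s-\nu W^*_{f(s)}>G_0(t)\}$. The limit equation gives $U^*_t\le X^*_t$ almost surely; this is the direction opposite to the one you try to prove pathwise. Since $f(U^*_t)$ is a stopping time for $W^*$ dominated by $f(X^*_T)$, optional sampling gives $\mathbb{E}[W^*_{f(U^*_t)}]=0=\mathbb{E}[M^*_t]$. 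Hence $\mathbb{E}[X^*_t-U^*_t]=\frac{\nu}{1+\lambda}\mathbb{E}[M^*_t-W^*_{f(U^*_t)}]=0$, which forces $X^*=U^*$.
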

\begin{proof}
    The proof is given in Section \ref{subsec:proof_convergence}.
\end{proof}
\begin{rem}
    \label{rem:f_fixed}
    Note that in most cases, we will keep $\lambda^n \,, \nu^n$ and $f^n$ independent of $n\,$, since the present work focuses on the convergence $K^n(t)dt \implies \delta_0(dt)\,$. However, we will use the possibility to make $\nu^n$ and $f^n$ depend on $n$ when studying large-time limits of Volterra clocks in Section \ref{subsubsec:large_time}.
\end{rem}

Keeping in mind that the Brownian motion is almost surely nowhere increasing, the limiting process $X^*$ is a pure-jump process. For each $t \geq 0\,$,  $X_t^*$ corresponds to a first hitting time of a time-changed Brownian motion to a linear boundary. More precisely, denoting by $f^{-1}$ the generalised inverse of $f\,$, we have
$$
f(X^*_t) = \inf\{s \geq 0\,:\, \nu\,W^*_s < (1 + \lambda)\,f^{-1}(s) - G_0(t)\}\,, \quad t \leq T\,.
$$
If $f$ is linear, we recover the Inverse Gaussian limit present in the literature, and even an Inverse Gaussian L\'evy process if $G_0$ is also affine. When $f$ is quadratic, $f(X_t)$ is the first hitting time of a Brownian motion to a square-root boundary.

\subsection{From pre-limit bursts to hitting time jumps}\label{sect:decorated_conv}

Theorem \ref{thm:weak_convergence} does not address how the limiting
characterisation as a first hitting time relates to the dynamics
of $X^n$. Specifically, the $M_1$ convergence reveals that steep ascents of $X^n$, over vanishing time windows, become
jumps of $X^*$, but the monotonicity of the
clock hides the fact that these ascents are driven by a
fluctuating impetus from the Brownian input, no longer seen in
the paths of $X^*$. Nevertheless, we can confirm that, in a
limiting sense, the geometric content of the hitting time
characterisation of $X^*$ is exactly capturing how the jumps arise from particular Brownian fluctuations that are amplified into bursts by the degenerating kernel. To make this precise, we write
\begin{equation}\label{eq:clock-general}
	X^n_t = G_0^n(t) + \int_0^t K^n(t-u)\,\Upsilon^n_u\,du,
	\qquad \Upsilon^n_u := \nu^n W^n_{f^n(X^n_u)} - \lambda^n X^n_u,
\end{equation}
where we single out $\Upsilon^n$ as the driving force of the
clock's increase.

\begin{rem}
	Note that $\Upsilon^n$ can be computed from the paths of $X^n$ (given $G_0^n$ and $K^n$) by convolving $X^n-G_0^n$ with the resolvent of the first kind of $K^n$ \cite[Definition 5.5.1]{gripenberg1990volterra}. This amounts to taking a generalised fractional derivative, which for fractional kernels $K^n(t)= t^{\alpha_n - 1}/\Gamma(\alpha_n)$ is the usual Caputo derivative $\Upsilon^n=\mathbb{D}^{\alpha_n}(X^n-G_0^n)$. 
\end{rem}

For large $n$, with $K^n$ concentrated near $0$, if
$\Upsilon^n_u$ is roughly constant around $t$, it follows
from~\eqref{eq:clock-general} that $X^n_u \approx G_0^n(u) +
\Upsilon^n_u$ is roughly constant and so the limit
$X^*$ should remain continuous at $t$. On the other hand, if $X^n_u \approx G_0^n(u) + \Upsilon^n_u$ is violated by $\Upsilon^n_u > X^n_u - G_0^n(u)$ on a small window of time around $t$, comparable to the support of $K^n$, then the
convolution in \eqref{eq:clock-general} picks up the surplus and
$X^n$ feels a burst that can advance it by a non-negligible amount even as the time window vanishes. This burst ends by $X^n - G^n_0$ catching up with $\Upsilon^n$, so $\Upsilon^n$ forms a `bridge' above $X^n_u - G_0^n(u)$ from $\Upsilon^n_{l_n} \approx X^n_{l_n} - G_0^n(l_n)$ to $\Upsilon^n_{r_n} \approx X^n_{r_n} -
G_0^n(r_n)$ over the burst window $(l_n,r_n)$.

In physical time, the limit of $\Upsilon^n$ will just be the finite variation process $\Upsilon^*=X^* - G_0$ and the aforementioned bridge collapses to a jump from $\Upsilon^*_{t-}$ to $\Upsilon^*_{t}$. However, we can recover the fluctuating bridge in clock coordinates. Indeed, writing 
\[
\Upsilon^n
= F^n \circ X^n,\qquad F^n(s) := \nu^n
W^n_{f^n(s)} - \lambda^n s,\qquad F^*(s) = \nu W^*_{f(s)} - \lambda s, 
\]
we get $F^*$ as the limiting bridge in clock-time with  $ F^*(s) > s-G_0(t)$ for $s\in (X^*_{t-}, X^*_t)$, where there is equality at both endpoints and $F^*$ only goes below the line immediately past $s=X^*_t$. That is,
\[
X^*_t= \inf\{s \geq 0\,:\,  F^*(s) < s - G_0(t)\},
\]
where $F^*$ on $[X^*_{t-},X^*_t]$ is now given a precise meaning as the limiting description of the fluctuation that caused a burst in the dynamics of $X^n$ and produced $\Delta X^*_t>0$ in the limit. The following formalises this as a decorated limit theorem.

\begin{thm}\label{thm:bursts} Let everything be as in Theorem \ref{thm:weak_convergence}. The weak convergence of the processes $\Upsilon^n= F^n \circ X^n$ defined in \eqref{eq:clock-general} is characterised by
\[
(F^n,X^n) \overset{n \to \infty}{\implies}  (F^*,X^*)\quad \text{on}\quad D^\circ[0,T],
\]
for $F^*$ and $X^*$ as above. In particular, we have
\[
\Upsilon^n \overset{n \to \infty}{\implies} (X^*-G_0,I^*)\quad \text{on} \quad \mathfrak{D}[0,T]
\]
with decorations $I^*_t = F^*([X^*_{t-},X^*_t])$, for $t\in D(X^*)$, and $I^*_t =\{ X^*_t-G_0(t) \}$, for $t\notin D(X^*)$.
\end{thm}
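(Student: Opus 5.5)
We derive Theorem \ref{thm:bursts} from Theorem \ref{thm:weak_convergence} by a continuous mapping argument on $D^\circ[0,T]$, followed by an application of the decorated framework of Section \ref{sect:decorated_paths}.

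\textbf{Step 1 (joint convergence of $(F^n,X^n)$).} Theorem \ref{thm:weak_convergence} gives $(W^n,X^n)\Rightarrow(W^*,X^*)$ on $D^\circ[0,T]$. By Skorokhod's representation theorem, which applies since $D^\circ[0,T]$ is Polish, we may assume the convergence holds almost surely. Then $W^n\to W^*$ uniformly on compacts and $X^n\to X^*$ in $M_1$.

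Write $F^n(s)=\nu^n W^n_{f^n(s)}-\lambda^n s$. We claim $F^n\to F^*$ uniformly on compacts. Fix $S>0$.
\begin{itemize}
\item Since $f^n\to f$ locally uniformly and $f$ is continuous (it is a locally uniform limit at every point), we have $\sup_{s\le S}|f^n(s)-f(s)|\to0$.
\item The growth bound keeps the values $f^n(s)$, $s\le S$, inside a fixed compact set.
\item On that compact set, $W^n\to W^*$ uniformly, and $W^*$ is uniformly continuous.
\end{itemize}
Together these give $\sup_{s\le S}|W^n_{f^n(s)}-W^*_{f(s)}|\to0$. Combined with $\lambda^n\to\lambda$ and $\nu^n\to\nu$, this yields $F^n\to F^*$ uniformly on $[0,S]$.

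The map $(W,X)\mapsto(F,X)$, although it depends on $n$ through $(f^n,\nu^n,\lambda^n)$, therefore sends almost surely convergent sequences to almost surely convergent sequences. Hence $(F^n,X^n)\to(F^*,X^*)$ almost surely in $D^\circ[0,T]$, which gives the first claim in law. One technicality remains: the convention $\phi(s)=\phi(\tau(T))$ beyond the range of $\tau$. This is handled by checking that uniform convergence on compacts is preserved, which holds because $X^n_T\to X^*_T$ by the $M_1$ characterisation at $T$.

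\textbf{Step 2 (decorated limit).} Each $X^n$ is continuous, so $D(X^n)=\emptyset$ and the jump condition of Proposition \ref{prop:cont_image_jump} holds trivially. Applied to $\Upsilon^n=F^n\circ X^n$, that proposition gives \eqref{eq:decorated_char} at every $t\in D(X^*)$, with $I^*_t=F^*([X^*_{t-},X^*_t])$. At $t\notin D(X^*)$, Proposition \ref{prop:conv_compositions} (local uniform convergence on $\mathbb{I}$) gives the same condition with $I^*_t=\{F^*(X^*_t)\}$.

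It remains to identify the base path: we need $F^*\circ X^*=X^*-G_0$ almost surely. This is exactly the pointwise limit equation \eqref{eq:hitting_time_equation}, established in the proof of Theorem \ref{thm:weak_convergence}. Alternatively, it follows from the hitting time formula: continuity of $F^*$ and the definition of the infimum force $F^*(X^*_t)=X^*_t-G_0(t)$, by right-continuity of the crossing.

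We must also check that $(X^*-G_0,I^*)\in\mathfrak{D}[0,T]$.
\begin{itemize}
\item The endpoints belong to the decoration: $F^*(X^*_{t-})=X^*_{t-}-G_0(t)$, using continuity of $G_0$ and $F^*$.
\item The càdlàg property \eqref{eq:I_cadlag_prop} holds: jumps of $X^*$ in small neighbourhoods have small total size away from $t$, and $F^*$ is uniformly continuous.
\end{itemize}
Proposition \ref{prop:pointwise_char_decorated} then converts the pointwise condition into convergence in $\mathfrak{D}[0,T]$ almost surely, and hence in law.

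\textbf{Main obstacle.} The most delicate point is the measurability and joint nature of the representation in Step 1: the almost sure convergence must live on a common space so that the decorated limit is a measurable functional of $(F^*,X^*)$. A secondary difficulty is verifying the Definition \ref{def:decorated_Skorokhod} requirement near accumulation points of jumps. For this, we would use that $\sum_{s\in(t,t+\delta]}\Delta X^*_s\le X^*_{t+\delta}-X^*_t\to0$, together with uniform continuity of $F^*$ on $[0,X^*_T]$.
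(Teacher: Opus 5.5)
Your proposal is correct and follows essentially the same route as the paper. Both establish convergence of $(F^n,X^n)$ on $D^\circ[0,T]$, then apply Proposition~\ref{prop:cont_image_jump} (with Proposition~\ref{prop:conv_compositions} off $D(X^*)$) and Proposition~\ref{prop:pointwise_char_decorated}, identifying the base path via $(1+\lambda)X^*=G_0+\nu W^*_{f(X^*)}$. The only differences are minor: you obtain the first claim by a direct almost-sure mapping argument from Theorem~\ref{thm:weak_convergence} (showing $F^n\to F^*$ uniformly on compacts) rather than re-invoking tightness and uniqueness of limit points, and you explicitly verify $(X^*-G_0,I^*)\in\mathfrak{D}[0,T]$, which the paper leaves implicit.
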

\begin{proof}The proof is given in Section \ref{subsect:rates_proof}.
\end{proof}

While $\Upsilon^n$ converges on $\mathfrak{D}[0,T]$, the convergence fails on $D[0,T]$ in any of the Skorokhod topologies. One can apply Proposition \ref{prop:S-conv} to assert $\Upsilon^n \Rightarrow \Upsilon^* = X^*-G_0$ on $D[0,T]$ in the $S$-topology, but this ignores the decoration $F^*$ on $[X^*_{t-},X^*_{t}]$.

The case of exponential kernels $K^n(t)=c_ne^{-\beta_n t}$ tending to a Dirac mass provides a particularly instructive example, as we obtain the nice rate equation
\[
\frac{d}{dt} \bigl(X^n_t - G^n_0(t) \bigr) = c_n \Bigl( \Upsilon^n_t - \frac{\beta_n}{c_n}\bigl(X^n_t - G^n_0(t) \bigr) \Bigr)
\]
with $c_n\rightarrow \infty$ and $\beta_n/c_n \rightarrow 1$. Thus, the burst structure in terms of $\Upsilon^n$, for large $n$, is directly visible in the rate of the clock. Note also that we can write $\frac{1}{c_n}\frac{d}{dt}(X^n-G_0^n)= \widetilde{F}^n \circ X^n + \frac{\beta_n}{c_n}G_0^n$ with $\widetilde{F}^n(s)=\nu^n W^n_{f^n(s)} - (1+\lambda^n)s$. So, as in Theorem \ref{thm:weak_convergence}, the convergence is characterised by $(\widetilde{F}^n,X^n)\Rightarrow (\widetilde{F}^*,X^*)$ on $D^\circ[0,T]$ with $\widetilde{F}^*(s)=\nu W^*_{f(s)} - (1+\lambda)s$. In particular, by Corollary \ref{cor:sum_hausdorff_range}, the rescaled rate $\frac{1}{c_n}\frac{d}{dt}(X^n-G_0^n)$ converges weakly to $(0,\widetilde{I}^*)$ on $\mathfrak{D}[0,T]$ with $\widetilde{I}^*_t=G_0(t)+\widetilde{F}^*([X^*_{t-},X^*_{t}])$ for $t\in D(X^*)$.

\section{Applications of the convergence result}
\label{subsec:applications}
We aim to demonstrate that the limiting regimes captured by $K^n(t)dt \Rightarrow \delta_0(dt)$ are in fact very general. In particular, this can explain the Inverse Gaussian scaling limits of affine Volterra processes that have been addressed in the literature. Our approach is more complete, as it includes a broader class of processes and associated limits. Furthermore, we provide functional characterisations of the convergence at the process level, while many existing results are at the level of the marginals. 

\subsection{Large-time limit of Volterra clocks}
\label{subsubsec:large_time}
In this section, we classify the large-time limit of Volterra clocks by applying Theorem \ref{thm:weak_convergence} to the sequence of kernels $(t \mapsto nK(nt))_{n \geq 0}\,$. As we now confirm, when the kernel is integrable on $\R_+\,$, this sequence converges to a Dirac mass at zero.
\begin{lem}
    \label{lemma:weak_cv_large_time_L1} 
    Let $K \in L^1(\R_+)\,$. We have the weak convergence of measures 
    $$
    nK(nt)\,dt \overset{n \to + \infty}{\implies} \|K\|_{L^1(\R_+)}\,\delta_0(dt)\,,
    $$
    on $[0\,,T]\,$, for any $T > 0\,$.
\end{lem}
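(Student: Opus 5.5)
The plan is to test the measures $\mu_n(dt):=nK(nt)\,dt$ against an arbitrary bounded continuous function $\varphi$ on $[0,T]$ and show directly that $\int_0^T \varphi(t)\,\mu_n(dt)\to \|K\|_{L^1(\R_+)}\,\varphi(0)$. We read weak convergence on $[0,T]$ in the usual sense of finite measures on the compact interval $[0,T]$ (closed at $0$), so the total masses must also converge. That case is included by taking $\varphi\equiv 1$. No result from earlier in the paper is needed; this is a direct change-of-variables argument. Throughout, $K$ is taken nonnegative, as in the paper's setting of $K^n\in L^1_{loc}(\R_+,\R_+)$. If $K$ were signed, the same argument would go through with $|K|$ used in the domination step.

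The first step is the substitution $u=nt$, which gives
\[
\int_0^T \varphi(t)\,nK(nt)\,dt=\int_0^{nT}\varphi(u/n)\,K(u)\,du=\int_0^\infty \mathbf 1_{\{u\le nT\}}\,\varphi(u/n)\,K(u)\,du .
\]
The second step is pointwise convergence of the integrand. For each fixed $u\ge 0$, we have $\mathbf 1_{\{u\le nT\}}\to 1$ as $n\to\infty$, and $\varphi(u/n)\to\varphi(0)$ by continuity of $\varphi$ at $0$. The integrand is dominated by $\|\varphi\|_\infty K(u)$, which is integrable on $\R_+$ because $K\in L^1(\R_+)$. Dominated convergence then yields the limit $\varphi(0)\int_0^\infty K(u)\,du=\|K\|_{L^1(\R_+)}\varphi(0)$, which is exactly the integral of $\varphi$ against $\|K\|_{L^1(\R_+)}\delta_0$. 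This proves the claim.

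There is no serious obstacle here. The only point needing care is that the atom of the limit sits at the boundary point $0$ of $[0,T]$, so test functions must be continuous up to $0$, and the Dirac mass must be viewed as a measure on the closed interval. Both conditions are built into the setup above. If one instead prefers the portmanteau or distribution-function formulation used elsewhere in the paper, the same substitution applies to $\mu_n([0,t])=\int_0^{nt}K(u)\,du$. This tends to $\|K\|_{L^1(\R_+)}$ for every $t>0$ by monotone convergence, and that is again equivalent to the stated weak convergence on $[0,T]$.
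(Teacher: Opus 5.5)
Your proof is correct for nonnegative $K$, which is the setting where the lemma is used (Theorem~\ref{thm:large_time_limit_L1} takes $K\in L^1(\R_+,\R_+)$). However, it takes a different route from the paper.

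The paper works with Laplace transforms. It computes $\widehat{K^n}(\lambda)=\widehat K(\lambda/n)\to \widehat K(0)=\|K\|_{L^1(\R_+)}$, recognises this as the transform of $\|K\|_{L^1(\R_+)}\delta_0$, and cites Feller's continuity theorem for Laplace transforms. You instead substitute $u=nt$ and apply dominated convergence against bounded continuous test functions.

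Your route is more elementary and self-contained: it needs only dominated convergence. The indicator $\mathbf 1_{\{u\le nT\}}$ makes the restriction to $[0,T]$ and the boundary atom at $0$ explicit. The continuity theorem, by contrast, natively gives convergence of measures on $[0,\infty)$. Passing to weak convergence on $[0,T]$ then needs the small extra observation that $T$ is not an atom of the limit, which the paper leaves implicit.

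What the Laplace-transform route buys is uniformity with Lemma~\ref{lemma:weak_cv_resolvent_mean_rev}. There the measures $R^n(t)\,dt$ are not rescalings of a fixed function, so no change of variables is available. But their transforms $\widehat{R^n}=n\widehat K/(1+n\widehat K)$ are explicit, so the same one-line argument proves both lemmas.

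One correction to your aside on signed kernels. With $|K|$ as the dominating function, your argument gives the limit $\bigl(\int_0^\infty K(u)\,du\bigr)\delta_0$, not $\|K\|_{L^1(\R_+)}\delta_0$. These differ in general, for example for $K=\mathbf 1_{[0,1]}-\mathbf 1_{[1,2]}$. So the statement as written genuinely requires $K\ge 0$, as does the paper's identification $\widehat K(0)=\|K\|_{L^1(\R_+)}$.
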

\begin{proof}
    The proof is given in Appendix \ref{app:resolvents}.
\end{proof}

Our approach yields new large-time convergence results, both in the Markovian and rough case, and even in the affine setting $f(x) = x\,$.

We fix $\lambda \,, \nu\geq 0\,$, $f$ satisfying Assumption \ref{assumption:f}, $K \in L^1_{loc}(\R_+ \,, \R_+)$ and $G_0 : \R_+ \to \R_+$ non-decreasing with $G_0(0) = 0$, such that there exists a weak solution $(X^{(T)}, W^{(T)})$  to
$$
X^{(T)}_t = G_0(t) + \int_0^t \,K(t-s)\,\left(-\lambda\, X_s^{(T)} + \nu\,W^{(T)}_{f(X^{(T)}_s)}\right)\,ds \,, \quad t \leq T\,,
$$
for the input $(G_0, K, f, \lambda, \nu)\,$, for any $T > 0$ (see Definition \ref{defn:weak_sol_Volterra clock} and Theorem \ref{thm:weak_existence_Volterra_clock_DDS}). We want to investigate the large-time behavior of this process. Hence, we fix $T > 0\,$, define the accelerated process $X^n_t := X^{(nT)}_{nt}$ for all $t \in [0\,, T]\,$, and aim to study its limit as $n$ goes to infinity. We also denote $W^{(nT)}$ by $W^n\,$.

When the memory kernel $K$ is integrable on $\R_+\,$, we obtain the following theorem.
\begin{thm}
    \label{thm:large_time_limit_L1}
    Assume $K \in L^1(\R_+\,,\R_+)\,$, with $\|K \|_{L^1(\R_+)} = 1$ for simplicity. Then the following hold:
    \begin{itemize}
        \item If $G_0(t) \to G_0^* \in \R_+$ as $t \to + \infty\,$, then $(W^n \,, X^n) \Rightarrow (W^*\,, X^*)$ in $D^{\circ}[0\,,T]\,$, where $W^*$ is a standard Brownian motion, and $X^*$ is the constant process
        $$
        X^*_t := \inf\{ s \geq 0\,:\, (1 + \lambda)s - \nu \,W^*_{f(s)} > G_0^*\,\}, \quad t \leq T\,.
        $$
        \item If there exists a sequence $c_n \downarrow 0$ such that, defining
        $$
        G_0^n(t) := c_n\,G_0(nt)\,, \quad f^n(x) := c_n^2\,f(x / c_n)\,, \quad \widetilde{W}^n_t := c_n\,W^n_{t / c_n^2}\,, \quad t\,, x \geq 0\,,
        $$
        we have $G_0^n \to G_0^*$ pointwise on $[0\,,T]$ for some continuous $G_0^*\,$, and $f^n \to_{l.u.} f^*$ on $\R_+\,$, then $(\widetilde{W}^n\,, c_n\,X^n) \Rightarrow (W^*\,, X^*)$ in $D^{\circ}[0\,,T]\,$, where $W^*$ is a standard Brownian motion, and 
        $$
        X^*_t := \inf\{s \geq 0\,:\, (1 + \lambda)s - \nu W^*_{f^*(s)} > G_0^*(t) \}\,, \quad t \leq T\,.
        $$
    \end{itemize}
\end{thm}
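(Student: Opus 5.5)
The plan is to reduce both items to Theorem \ref{thm:weak_convergence} by rewriting the accelerated process $X^n_t = X^{(nT)}_{nt}$ as a weak solution of \eqref{eq:Volterra_clock} on $[0,T]$ with the rescaled kernel $K^n(t) := nK(nt)$. We have $K^n(t)\,dt \Rightarrow \delta_0(dt)$ by Lemma \ref{lemma:weak_cv_large_time_L1}, since $\|K\|_{L^1}=1$. The change of variables $s = nu$ in the equation for $X^{(nT)}$ at time $nt$ gives
\[
X^n_t = G_0(nt) + \int_0^t nK(n(t-u))\bigl(-\lambda X^n_u + \nu W^n_{f(X^n_u)}\bigr)\,du, \quad t\le T.
\]
For this to be a weak solution in the sense of Definition \ref{defn:weak_sol_Volterra clock}, I will use the time-changed filtration $\mathcal G^n_t := \mathcal G_{t}$ for the Brownian motion $W^n$; this filtration is unchanged, since only the clock time is reparametrised. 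Then $f(X^n_t)=f(X^{(nT)}_{nt})$ is still a stopping time, and $W^n_{f(X^n_\cdot)}$ is a reparametrisation of a martingale and hence still a martingale in its own filtration.

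For the first item, I take $G_0^n(t) := G_0(nt)$, $f^n=f$, $\lambda^n=\lambda$, $\nu^n=\nu$. Then $G_0^n(t)\to G_0^*$ for every $t>0$. At $t=0$, however, $G_0^n(0)=0$, so the pointwise limit is discontinuous at $0$ whenever $G_0^*>0$, and the continuity hypothesis of Theorem \ref{thm:weak_convergence} fails there. This is the main obstacle. My first attempt would be to inspect the proof of Theorem \ref{thm:weak_convergence}: there the hypothesis on $G_0$ is presumably used only at $M_1$-continuity points of the limit, and $M_1$ convergence in $D^\uparrow[0,T]$ only requires pointwise convergence on a dense set containing $T$. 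If so, I would extend the theorem to $G_0^n\to G_0$ pointwise on $(0,T]$ with $G_0$ continuous there, allowing a jump at $0$. The limit $X^*_t=\inf\{s:(1+\lambda)s-\nu W^*_{f(s)}>G_0^*\}$ for $t>0$ is constant in $t$, and the jump at $0$ is harmless. The alternative is a direct splitting argument: fix $\epsilon>0$, apply Theorem \ref{thm:weak_convergence} on $[\epsilon,T]$ after shifting, and control the initial window $[0,\epsilon]$ using the monotonicity of $X^n$ together with $M_1$ convergence at the dense set of times $t\ge\epsilon$.

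For the second item, I rescale space as well. I set $\widetilde X^n := c_n X^n$ and $\widetilde W^n_t := c_n W^n_{t/c_n^2}$, which is again a standard Brownian motion with respect to the filtration $\mathcal G^n_{t/c_n^2}$. Then
\[
c_n W^n_{f(X^n_u)} = \widetilde W^n_{c_n^2 f(\widetilde X^n_u/c_n)} = \widetilde W^n_{f^n(\widetilde X^n_u)}.
\]
Multiplying the equation by $c_n$ therefore yields
\[
\widetilde X^n_t = G_0^n(t) + \int_0^t K^n(t-u)\bigl(-\lambda \widetilde X^n_u + \nu \widetilde W^n_{f^n(\widetilde X^n_u)}\bigr)\,du,
\]
and the stopping-time and martingale properties transfer through the deterministic rescaling. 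I then check the remaining hypotheses of Theorem \ref{thm:weak_convergence}. The $f^n$ satisfy Assumption \ref{assumption:f}, because $(f^n)'(x) = c_n f'(x/c_n)\le c\,(c_n + x)$. They also satisfy the uniform quadratic bound, since
\[
c_n^2 f(x/c_n)\le C c_n^2(1+x^2/c_n^2)\le C(1+x^2).
\]
Finally, $G_0^n\to G_0^*$ and $f^n\to_{l.u.} f^*$ hold by assumption. Theorem \ref{thm:weak_convergence} then gives $(\widetilde W^n, c_nX^n)\Rightarrow(W^*,X^*)$ on $D^\circ[0,T]$, which is the second item.
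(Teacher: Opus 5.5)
Your treatment of the second item matches the paper's argument: change variables to get the kernel $nK(n\cdot)$, multiply by $c_n$, absorb the scaling into $\widetilde W^n$ and $f^n$, check $c_n^2 f(x/c_n)\le C(1+x^2)$, and apply Theorem \ref{thm:weak_convergence}. You also check the filtration transfer and Assumption \ref{assumption:f} for $f^n$, which the paper leaves implicit.

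For the first item you have spotted a real defect that the paper's proof passes over. The paper applies Theorem \ref{thm:weak_convergence} directly, even though the pointwise limit of $G_0(n\,\cdot)$ is $G_0^*\mathbf{1}_{(0,T]}$. However, your repair does not close the gap, and the claim that ``the jump at $0$ is harmless'' is exactly where it breaks. On $D^\uparrow[0,T]$ with the $M_1$ topology, the evaluation $x\mapsto x(0)$ is continuous, since convergence is pointwise on a dense set \emph{including} $0$. Since $X^n_0=0$ for every $n$, any limit in $D^\circ[0,T]$ must have $X^*_0=0$. The stated limit instead has $X^*_0=\inf\{s:(1+\lambda)s-\nu W^*_{f(s)}>G_0^*\}>0$ almost surely. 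A c\`adl\`ag path on $[0,T]$ also cannot jump at $t=0$. So when $G_0^*>0$ the first item cannot hold on $D^\circ[0,T]$ as literally stated. The obstruction lies in the conclusion, not merely in a hypothesis, so no re-reading of the proof of Theorem \ref{thm:weak_convergence} will remove it. This is also where the left-endpoint $M_1$ tightness condition $\lim_{\delta\downarrow0}\limsup_n\mathbb P(X^n_\delta>\eta)=0$ is lost: Lemma \ref{lem:M1_tightness_time_changed} only gives $\mathbb E[X^n_\delta]\le G_0(n\delta)\to G_0^*$.

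Your shifting alternative does not work as a black box either. The equation is non-Markovian, so on $[\varepsilon,T]$ the past enters through the random forcing $\int_0^\varepsilon K^n(t-u)\Upsilon^n_u\,du$, which is not negligible for $t-\varepsilon=O(1/n)$. Moreover, the shifted clock starts from $X^n_\varepsilon\neq0$ and its martingale part does not start at $0$, so it is not a weak solution in the sense of Definition \ref{defn:weak_sol_Volterra clock}. What is needed is first to reformulate the first item. One option is convergence on $D^\circ[\varepsilon,T]$ for each $\varepsilon\in(0,T)$. Another is to extend all paths by $0$ to $[-1,0)$, so that a jump at the origin is admissible. Then re-run the proof of Theorem \ref{thm:weak_convergence} rather than cite it:
\begin{itemize}
\item Endpoint tightness follows from Lemma \ref{lem:M1_tightness_time_changed}, since $G_0(n(\varepsilon+\delta))-G_0(n\varepsilon)\to0$.
\item The argument of Lemma \ref{lemma:limit_deterministic_equation} only uses $G_0^n(t)\to G_0(t)$ at continuity points $t>0$, the uniform bound on $y^n$ over $[0,T]$, and right-continuity of the limit on $(0,T]$. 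It therefore yields $(1+\lambda)X^*_t=G_0^*+\nu W^*_{f(X^*_t)}$ for $t>0$.
\item The localisation in Proposition \ref{prop:martingality_limit}, applied on the extended time interval, still gives $\mathbb E[W^*_{f(X^*_t)}]=0$. This is all the first-passage identification requires.
\end{itemize}
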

\begin{proof}
    After a change of variable, we have 
    $$
    X^n_t = G_0(nt) + \int_0^t\,nK(n(t-s))\,\left( - \lambda \, X^n_s + \nu\,W^n_{f(X^n_s)}\right)\,ds\,, \quad t \leq T\,.
    $$
    From Lemma \ref{lemma:weak_cv_large_time_L1}, we know that the measure $nK(nt)\,dt$ converges weakly to $\delta_0(dt)$ on $[0\,,T]\,$, as $n \to + \infty\,$. Hence, if $G_0(nt) \to G_0^*\,$, we can apply Theorem \ref{thm:weak_convergence} to obtain the first result.

    In the case where we need to rescale $G_0(nt)\,$, we have 
    \begin{align*}
    c_n X^n_t &= G_0^n(t) + \int_0^t\,n\,K(n(t-s))\,\left(- \lambda c_n X^n_s + \nu\,c_n\,W^n_{f(X^n_s)}\right)\,ds\\
    &= G_0^n(t) + \int_0^t \,n\, K(n(t-s))\,\left( - \lambda c_n X^n_s + \nu \widetilde{W}^n_{f^n(c_n X^n_s)}\right)\,ds\,, \quad t \leq T\,.
    \end{align*}
     Since the derivative of $f$ has at most linear growth and $f(0) = 0\,$, there exists $a \geq 0\,$, such that 
    $$
    f(x) \leq a(x^2 + x)\,, \,\, x \geq 0  \quad \implies \quad f^n(x) \leq a(x^2 + c_n x) \,, \,\, x \geq 0\,, \, \, n \geq 0\,.
    $$
    Recalling that $c_n \downarrow 0$ in the limit, $\sup_{n \geq 0} \, \sup_{x \geq 0}\, f^n(x) / (1 + x^2) < + \infty\,$. 
    We can thus apply Theorem \ref{thm:weak_convergence} to the sequence $(\widetilde{W}^n\,, c_n X^n)\,$.
\end{proof}
\begin{rem}
    \label{rem:limit_rescaled_f}
    Note that $f^*(x) := \lim_{n \to + \infty} c_n^2 f(x /c_n)\,$, when it is well-defined, is the quadratic part of $f\,$. Indeed, $c_n^2 (x / c_n)^2 = x^2\,$, and if $g(x) = o_{x \to + \infty}(x^2)\,$, then $c_n^2 g(x / c_n) \to 0\,$. Hence, only a quadratic function can survive this rescaling, any other case will lead to $f^* \equiv 0\,$, i.e.~the deterministic limit $X_t^* = G_0^*(t) / (1 + \lambda)\,$.
\end{rem}

Theorem \ref{thm:large_time_limit_L1} allows to characterise the large-time limit of a large class of well studied integrated Volterra square-root processes, such as the classical Markovian square-root process, as detailed in the following example.

\begin{example}
    \label{example:large_time_limits}
    The square-root process 
    $$
    Y_t = Y_0 + \int_0^t\,\lambda \,(\theta - Y_s)\,ds + \nu \,\int_0^t\,\sqrt{Y_s}\,dB_s\,,
    $$
    with $\lambda > 0\,$, can be rewritten after a variation of constant and integration, as 
    $$
    X_t = \frac{Y_0 - \theta}{\lambda}\,(1 - e^{-\lambda t}) + \theta \,t + \nu\,\int_0^t\,e^{-\lambda (t-s)}\,W_{X_s}\,ds\,,
    $$
    where $X := \int_0^{\cdot}\,Y_s\,ds\,$. Applying Theorem \ref{thm:large_time_limit_L1} with $K(t) = e^{-\lambda t}\,$:
    \begin{itemize}
        \item If $\theta = 0\,$, $(X_{nt})_{n \geq 0}$ converges weakly, for any $t \geq 0\,$, to an Inverse Gaussian random variable of the form 
        $$
        X^* := \inf \{s \geq 0\,:\, \lambda s - \nu W^*_s > Y_0\}\,.
        $$
        To our knowledge, this convergence result is new; the closest related work \cite{fordeLARGEMATURITYSMILEHESTON} concerns large-time large deviations for the log-price in the Heston model, assuming $\theta > 0\,$.
        \item If $\theta > 0\,$, then $(t \mapsto X_{nt} / n)_{n \geq 0}$ converges weakly to $t \mapsto \theta t$ in the $M_1$ topology, which also follows from the ergodicity of $Y\,$.
    \end{itemize}
    If instead we consider the quadratic Volterra clock
    $$
    Y_t = Y_0 + \int_0^t\,\lambda\,(\theta - Y_s)\,ds + \nu\,\int_0^t\,\sqrt{X_s\,Y_s}\,dB_s\,,
    $$
    with $\theta > 0\,$, yielding the integrated dynamic
    $$
    X_t = \frac{Y_0 - \theta}{\lambda}\,(1 - e^{-\lambda t}) + \theta \,t + \nu\,\int_0^t\,e^{-\lambda(t-s)}\,W_{X_s^2}\,ds\,,
    $$
    then Theorem \ref{thm:large_time_limit_L1} shows that we obtain a non trivial first passage jump limit. More precisely, $(t \mapsto X_{nt} / n)_{n \geq 0}$ converges weakly in the $M_1$ topology to $X^*$ of the form
    $$
    X^*_t := \inf\{s \geq 0\,:\,\lambda s - \nu W^*_{s^2} > \lambda \theta t \}\,.
    $$
\end{example}

We now study non-integrable kernels, i.e.~$\int_0^{\infty} \,K(s)\,ds = + \infty\,$. In that case, with the additional assumption that $K$ is completely monotone, we show in Proposition \ref{prop:nonnegative_resolvent_cm} that its resolvent of the second kind is integrable on $\R_+\,$. Hence, we can obtain similar results by slightly modifying the equation. Denoting by $R_{\lambda}$ the resolvent of $\lambda K$ (see Appendix \ref{app:resolvents}), we have 
\begin{equation}
\label{eq:Volterra_clock_resolvent}
X^{(T)}_t = \widetilde{G_0}(t) + \frac{\nu}{\lambda}\,\int_0^t\,R_{\lambda}(t-s)\,W^{(T)}_{f(X^{(T)}_s)}\,ds\,,
\end{equation}
where $\widetilde{G_0}(t) := G_0(t) - \int_0^t  R_{\lambda}(t-s)G_0(s)ds\,$.

\begin{thm}
    \label{thm:large_time_limit_non_L1}
    Assume $K \in L^1_{loc}(\R_+\,,\R_+)$ is completely monotone with $\int_0^{\infty}\,K(s)\,ds = + \infty\,$, and $\lambda > 0\,$. Then the following hold:
    \begin{enumerate}
        \item If $\widetilde{G_0}(t) \to G_0^* \in \R_+$ as $t \to + \infty\,$, then $(W^n \,, X^n) \Rightarrow (W^*\,, X^*)$ in $D^{\circ}[0\,,T]\,$, where $W^*$ is standard Brownian motion, and $X^*$ is the constant process
        $$
        X^*_t := \inf\{s \geq 0 \,:\, \lambda s - \nu W^*_{f(s)} > G_0^* \}\,, \quad t \leq T\,.
        $$
        \item If there exists a sequence $c_n \downarrow 0$ such that, defining
        $$
        \widetilde{G_0}^n(t) := c_n\,\widetilde{G_0}(nt)\,, \quad f^n(x) := c_n^2\,f(x / c_n)\,, \quad \widetilde{W}^n_t := c_n\,W^n_{t / c_n^2}\,, \quad t\,,x \geq 0\,,
        $$
        we have $\widetilde{G_0}^n \to G_0^*$ pointwise on $[0\,,T]$ for some continuous $G_0^*\,$, and $f^n \to_{l.u.} f^*$ on $\R_+\,$, then $(\widetilde{W}^n\,, c_n\,X^n) \Rightarrow (W^*\,, X^*)$ in $D^{\circ}[0\,,T]\,$, where $W^*$ is a standard Brownian motion, and
        $$
        X^*_t := \inf\{s \geq 0\,:\, \lambda s - \nu W^*_{f^*(s)} > G_0^*(t) \}\,, \quad t \leq T\,.
        $$
    \end{enumerate}
\end{thm}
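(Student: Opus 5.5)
The plan is to reduce to Theorem \ref{thm:weak_convergence}, in the same way as the proof of Theorem \ref{thm:large_time_limit_L1}, using the resolvent reformulation \eqref{eq:Volterra_clock_resolvent} in place of the original equation.

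\textbf{Step 1: rescale the resolvent equation.} Write $R_\lambda$ for the resolvent of the second kind of $\lambda K$, so that $R_\lambda = \lambda K - \lambda K * R_\lambda$. Convolving \eqref{eq:Volterra_clock} with this identity gives \eqref{eq:Volterra_clock_resolvent}; this is the standard variation-of-constants computation, which I take from Appendix \ref{app:resolvents}. Then set $X^n_t = X^{(nT)}_{nt}$ and change variables $s\mapsto ns$ in \eqref{eq:Volterra_clock_resolvent}. This yields
\[
X^n_t = \widetilde{G_0}(nt) + \int_0^t \frac{1}{\lambda}\,nR_\lambda(n(t-s))\,\bigl(\nu W^n_{f(X^n_s)}\bigr)\,ds.
\]
This is again a Volterra clock equation, with kernel $K^n(t) := \lambda^{-1} n R_\lambda(nt)$, input $\widetilde{G_0}(n\cdot)$, mean-reversion parameter $0$ and volatility $\nu$.

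\textbf{Step 2: properties of $R_\lambda$ and the limit kernel.} Since $K$ is completely monotone, Proposition \ref{prop:nonnegative_resolvent_cm} gives $R_\lambda\ge0$ and $R_\lambda\in L^1(\R_+)$, so Lemma \ref{lemma:weak_cv_large_time_L1} applies to $K^n$. It remains to show $\|R_\lambda\|_{L^1}=\lambda$. Taking Laplace transforms, $\hat R_\lambda = \lambda\hat K/(1+\lambda\hat K)$. Non-integrability of $K$ gives $\hat K(p)\to\infty$ as $p\downarrow0$, by monotone convergence. Hence $\int_0^\infty R_\lambda = \lim_{p\downarrow0}\hat R_\lambda(p)=1$, so $\|R_\lambda\|_{L^1}=1$ and $K^n(t)\,dt\Rightarrow \lambda^{-1}\delta_0$. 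To normalise, I multiply through by $\lambda$: the limiting pointwise equation reads $X^* = G_0^* + \lambda^{-1}\nu W^*_{f(X^*)}$, that is, $\lambda X^* - \nu W^*_{f(X^*)} = \lambda G_0^*$. I will recheck the normalisation here, since the stated boundary $\lambda s - \nu W > G_0^*$ suggests that the correct normalisation of \eqref{eq:Volterra_clock_resolvent} uses $\nu/\lambda$ together with $\widetilde{G_0}$ appropriately scaled. Concretely, I would rewrite the equation with kernel $K^n := nR_\lambda(n\cdot)$, which converges to $\delta_0$, and coefficient $\nu^n:=\nu/\lambda$. Theorem \ref{thm:weak_convergence} with $\lambda^n=0$ then gives $X^*=\inf\{s: s - (\nu/\lambda)W^*_{f(s)} > G_0^*\}$, which after multiplying by $\lambda$ matches the statement up to the scaling convention for $G_0^*$.

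\textbf{Step 3: check the remaining hypotheses of Theorem \ref{thm:weak_convergence}.}
\begin{itemize}
\item $(W^n,X^n)$ is a weak solution of the modified equation. The stopping-time and martingale conditions of Definition \ref{defn:weak_sol_Volterra clock} are inherited from the original solution, since they are invariant under deterministic time scaling of the clock.
\item $\widetilde{G_0}(n\cdot)\to G_0^*$ pointwise, constant in case 1.
\item $f$ is fixed, so the growth bound on $f^n$ is trivial.
\end{itemize}
For case 2, I multiply by $c_n$ and use Brownian scaling $\widetilde W^n_t=c_nW^n_{t/c_n^2}$. This gives $c_nW^n_{f(X^n)}=\widetilde W^n_{f^n(c_nX^n)}$, and the uniform quadratic bound $f^n(x)\le a(x^2+c_nx)$ holds exactly as in the proof of Theorem \ref{thm:large_time_limit_L1}.

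\textbf{Main obstacle.} The delicate point is that Theorem \ref{thm:weak_convergence} requires $G_0^n$ to be nonnegative and non-decreasing, but $\widetilde{G_0}=G_0-R_\lambda*G_0$ need not be monotone. I would address this first via the decomposition $\widetilde{G_0}(t)=\int_0^t(1-\int_0^{t-u}R_\lambda)\,dG_0(u)$, obtained by integrating by parts. Since $\int_0^\cdot R_\lambda\le1$ by Step 2 and positivity, this is nonnegative. If monotonicity genuinely fails, I would instead check that the proof of Theorem \ref{thm:weak_convergence} only uses tightness of $G_0^n$ and continuity of the limit, and invoke it in that form.
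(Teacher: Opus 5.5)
Your reduction is the paper's: pass to \eqref{eq:Volterra_clock_resolvent}, use Proposition~\ref{prop:nonnegative_resolvent_cm}, and apply the integrable-kernel result with kernel $R_\lambda$, zero mean reversion and coefficient $\nu/\lambda$. Your Laplace-transform check that $\|R_\lambda\|_{L^1}=1$ is correct. It fills in a step the paper leaves implicit: Proposition~\ref{prop:nonnegative_resolvent_cm} only gives $\le 1$, whereas Theorem~\ref{thm:large_time_limit_L1} needs mass one.

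Your normalisation computation is also correct, and you should say so plainly rather than appeal to a ``scaling convention''. With $G_0^*$ defined as the limit of $\widetilde{G_0}$ (resp.\ $\widetilde{G_0}^n$), the reduction gives the threshold $\lambda G_0^*$, not $G_0^*$, in both items. The paper's own reduction produces the same. This is consistent with Theorem~\ref{thm:weak_cv_mean_rev_process}. It is also consistent with the square-root process of Example~\ref{example:large_time_limits} with $\theta=0$, which is the Volterra clock with $K\equiv1$: there $\widetilde{G_0}\to Y_0/\lambda$ while the limiting threshold is $Y_0$.

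The genuine gap is the one you flag yourself. Theorem~\ref{thm:weak_convergence} requires non-decreasing inputs, and your argument does not show that $\widetilde{G_0}$ is non-decreasing. Write $\bar R_\lambda:=\int_0^\cdot R_\lambda$. Your identity $\widetilde{G_0}(t)=\int_0^t\bigl(1-\bar R_\lambda(t-u)\bigr)\,dG_0(u)$ yields nonnegativity only, because $\widetilde{G_0}(t)-\widetilde{G_0}(s)$ contains the nonpositive term $\int_0^s\bigl(\bar R_\lambda(s-u)-\bar R_\lambda(t-u)\bigr)\,dG_0(u)$.

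In fact, no argument using only the monotonicity of $G_0$ and properties of $R_\lambda$ can succeed. Take $\nu=0$, $K\equiv1$ and $G_0(t)=t\wedge1$. Then $\widetilde{G_0}'=\mathbf{1}_{\{t<1\}}-\lambda\widetilde{G_0}$, so $\widetilde{G_0}$ strictly decreases after $t=1$. Monotonicity therefore has to come from the standing hypothesis that a non-decreasing solution $X^{(T)}$ exists.

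The missing idea is to take expectations in \eqref{eq:Volterra_clock_resolvent}:
\begin{itemize}
\item $M=W_{f(X^{(T)})}$ is a martingale started at $0$, so $\mathbb E|M_s|\le\mathbb E|M_T|$.
\item Fubini then gives $\mathbb E[(R_\lambda*M)_t]=0$, hence $\widetilde{G_0}(t)=\mathbb E[X^{(T)}_t]$.
\item This is nonnegative, non-decreasing and zero at $0$ because $X^{(T)}$ is a clock, and it passes directly to $\widetilde{G_0}(n\cdot)$ and $c_n\widetilde{G_0}(n\cdot)$.
\end{itemize}
This is precisely the paper's proof.

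Your fallback of re-running Theorem~\ref{thm:weak_convergence} without monotonicity could be made to work; indeed, the inequality in Lemma~\ref{lem:M1_tightness_time_changed} forces the inputs to be non-decreasing. As written, however, it is an unexecuted hedge. That proof also implicitly uses monotonicity of the limiting input, for example to get that $U^*$ is c\`adl\`ag.
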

\begin{proof}
    From Proposition \ref{prop:nonnegative_resolvent_cm}, we know that $R_{\lambda}$ is nonnegative and integrable on $\R_+\,$. Moreover, recalling that $W^{(T)}_{f(X_s^{(T)})}$ is a martingale, taking expectations in \eqref{eq:Volterra_clock_resolvent} gives $\widetilde{G_0}(t) = \mathbb E [X_t^{(T)}]\,$, which is nonnegative, non-decreasing, and starts from zero. We can thus apply Theorem \ref{thm:large_time_limit_L1}, which gives the result.
\end{proof}

\begin{example}
    For the fractional square-root process
    $$
    X_t = G_0(t) + \frac{1}{\Gamma(\alpha)}\int_0^t \,(t-s)^{\alpha - 1}\,\left( - \lambda X_s + \nu \,W_{X_s}\right)\,ds\,,
    $$
     with $0 < \alpha < 1$ and $\lambda > 0\,$, the resolvent of $\lambda K\,$, given by $R_{\lambda}(t) = \lambda t^{\alpha - 1}\,E_{\alpha, \alpha}(-\lambda t^{\alpha})$, where $E_{\alpha, \beta}$ is the two-parameter Mittag-Leffler function, is integrable on $\R_+$ thanks to Proposition \ref{prop:nonnegative_resolvent_cm}. For $G_0 := \int_0^{\cdot} (K * b)(s)\,ds\,$, $b \in L^1(\R_+\,, \R_+)\,$, one verifies that $\widetilde{G_0} := G_0 - R_{\lambda} * G_0 \to G_0^* \in \R_+\,$ as $t \to + \infty\,$. Hence, $X_t$ converges weakly to 
     $$
     X^* := \inf\{s \geq 0\,:\, \lambda s - \nu W^*_{s} > G_0^* \}\,,
     $$
     which is an Inverse Gaussian random variable. While \cite{friesen2024volterra} establishes the existence of limiting distributions for $Y_t := \dot{X_t}$ when it exists (i.e.~$\alpha > 1/2$) via the characteristic functional, the present convergence for $X_t$ is new, fully explicit, and valid for any $0 < \alpha < 1\,$.

    If $\lambda = 0\,$, we can no longer apply Theorem \ref{thm:large_time_limit_non_L1}. However, formally letting $\lambda \to 0$ in the limiting equations yields 
    $$
    X^* = \inf\{s \geq 0\,:\, - \nu W^*_s > G_0^* \}
    $$
    which is a  $1/2$--stable L\'evy random variable with infinite mean. In the special case $\alpha = 1/2\,$, this corresponds to the large-time limit established in \cite{dawson_super-brownian_1994} for the total occupation time of the single-point catalytic super-Brownian motion (see \eqref{eq:catalytic_Volterra_sqrt}).
\end{example}

\subsection{Fast regime}
\label{subsubsec:large_mean_rev}
In this section, we study the fast regime of Volterra clocks, which corresponds to sending $\lambda$ and $\nu$ to infinity simultaneously in \eqref{eq:Volterra_clock}. This can be done using Theorem \ref{thm:weak_convergence}, thanks to the following result.
\begin{lem}
\label{lemma:weak_cv_resolvent_mean_rev}
    Let $K \in L^1_{loc}(\R_+\,, \R)$ be a completely monotone kernel, and $R^n$ the resolvent of $n K\,$. Then we have the weak convergence of measures
    $$
    R^n(t)\,dt \overset{n \to + \infty}{\implies} \delta_0(dt)\,,
    $$
    on $[0\,,T]\,$, for any $T > 0\,$.
\end{lem}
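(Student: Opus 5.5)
The plan is to work with the resolvent equation and the Laplace transform. The resolvent $R^n$ of $nK$ is characterised by
\[
R^n = nK - nK * R^n .
\]
By Proposition \ref{prop:nonnegative_resolvent_cm} (used for the fixed scale $\lambda=n$), $R^n$ is nonnegative; in fact, for completely monotone $K$, $R^n$ is itself completely monotone. The claim then reduces to two facts about $\mu_n(dt):=R^n(t)\,dt$ on $[0,T]$:
\begin{enumerate}[label=(\roman*)]
    \item the total mass $\int_0^T R^n(t)\,dt$ tends to $1$;
    \item for every $\epsilon>0$, $\int_\epsilon^T R^n(t)\,dt\to 0$.
\end{enumerate}
Given nonnegativity, these two facts yield $\int_0^T \varphi\, d\mu_n \to \varphi(0)$ for every continuous $\varphi$, by splitting the integral at $\epsilon$ and using the uniform continuity of $\varphi$ near $0$.

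To obtain (i) and (ii), I will avoid the Laplace transform $\widehat{R^n}(z)=n\widehat K(z)/(1+n\widehat K(z))$, since it only gives convergence of $\widehat{R^n}(z)\to 1$ for each fixed $z>0$. Instead I will use the integrated form of the resolvent equation. Set $\rho_n(t):=\int_0^t R^n(s)\,ds$. Integrating $R^n + nK*R^n = nK$ gives
\[
\rho_n(t) + n\int_0^t K(t-s)\rho_n(s)\,ds = n\int_0^t K(s)\,ds .
\]
The standard fact for completely monotone kernels (the argument behind Proposition \ref{prop:nonnegative_resolvent_cm}) is that $\rho_n$ takes values in $[0,1]$ and is non-decreasing. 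Indeed, $1-\rho_n$ solves the same type of equation as in the resolvent representation, namely $(1-\rho_n) + nK*(1-\rho_n) = 1$, and its solution is nonnegative and non-increasing for completely monotone $K$.

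With this in hand, for $t\ge\epsilon$ monotonicity of $\rho_n$ gives
\[
n\int_0^t K(t-s)\rho_n(s)\,ds \ \ge\ n\,\rho_n(\epsilon)\int_0^{t-\epsilon}K(u)\,du .
\]
Combining this with the bound $1-\rho_n(t) = 1 - n\int_0^t K + nK*\rho_n \le \ldots$ is messy, so I will instead argue directly through $1-\rho_n$. Write $h_n:=1-\rho_n$. It is non-increasing, nonnegative and satisfies $h_n(t)+n\int_0^t K(t-s)h_n(s)\,ds=1$. Hence for $t\le T$,
\[
h_n(t)\,n\int_0^t K(u)\,du \le n\int_0^t K(t-s)h_n(s)\,ds \le 1,
\]
so $h_n(t)\le \bigl(n\int_0^t K\bigr)^{-1}\to 0$ for each fixed $t>0$, provided $K\not\equiv0$ (this is implicit in the lemma). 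Therefore $\rho_n(\epsilon)\to1$ and $\rho_n(T)\to 1$. Since $\rho_n\le1$, this gives both (i) and (ii): $\int_\epsilon^T R^n = \rho_n(T)-\rho_n(\epsilon)\to0$.

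The main obstacle is justifying that $h_n$ is nonnegative and non-increasing, with $\rho_n\le 1$. For this I will invoke the appendix result on resolvents of completely monotone kernels (Proposition \ref{prop:nonnegative_resolvent_cm} and its proof). If the appendix only gives $R^n\ge 0$ and $\int_0^\infty R^n\le 1$, that is already enough: $R^n\ge0$ makes $\rho_n$ non-decreasing, so $h_n$ is non-increasing, and $\rho_n\le\int_0^\infty R^n\le 1$ makes $h_n\ge0$. The bound $\int_0^\infty R^n = \lim_{z\downarrow0} n\widehat K(z)/(1+n\widehat K(z))\le 1$ follows by monotone convergence applied to the Laplace transform.
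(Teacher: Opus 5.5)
Your proof is correct, but it takes a different route from the paper. The paper stays in the Laplace domain: it writes $\widehat{R^n}(\lambda)=n\widehat K(\lambda)/(1+n\widehat K(\lambda))\to 1$ for every $\lambda>0$, and then cites the continuity theorem for Laplace transforms of measures on $[0,\infty)$ (Feller, Theorem XIII.1.2a) to get weak convergence on $[0,T]$. Your remark that the transform ``only'' gives convergence for each fixed $z>0$ undersells that step: the continuity theorem upgrades exactly this to $\int_0^t R^n\to 1$ at every continuity point $t>0$ of the limit, which is your statement $\rho_n(t)\to1$.

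You instead work in the time domain. Integrating the resolvent identity gives $h_n+nK*h_n=1$ for $h_n=1-\int_0^\cdot R^n$. Proposition~\ref{prop:nonnegative_resolvent_cm}, applied to the completely monotone kernel $nK$, gives $R^n\ge0$ and $\int_0^\infty R^n\le1$, so $h_n$ is non-increasing with values in $[0,1]$. This yields the explicit bound $0\le 1-\int_0^tR^n\le\bigl(n\int_0^tK\bigr)^{-1}$.

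What each route buys:
\begin{itemize}
\item Your argument is self-contained, avoids the Tauberian-type black box, and produces a quantitative rate.
\item The paper's argument is shorter once the continuity theorem is granted.
\item Both rest on Proposition~\ref{prop:nonnegative_resolvent_cm}. The paper uses it so that $\widehat{R^n}$ is the transform of a finite nonnegative measure; you use it for monotonicity and for $h_n\ge0$.
\end{itemize}

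Your backup justification of $\int_0^\infty R^n\le 1$ is also fine, since $0\le R^n\le nK$ makes the transform identity legitimate. Both proofs tacitly need $K\not\equiv0$. For a non-increasing kernel, this already forces $\int_0^tK>0$ for every $t>0$, which is all your bound requires.
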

\begin{proof}
    The proof is given in Appendix \ref{app:resolvents}.
\end{proof}

The following theorem unifies and extends the scaling limits of affine Volterra processes converging to Inverse Gaussian distributions in \cite{mechkovFastReversionLimitHeston2014, McCrickerd2021} (Markovian case) and \cite{bondiLevyProcessesWeak2025} (fractional kernel) beyond the affine setting, while being applicable to a broader class of kernels.

\begin{thm}
    \label{thm:weak_cv_mean_rev_process}
    We consider $K \in L^1_{loc}(\mathbb R_+\,, \mathbb R_+)$ a completely monotone kernel, $\lambda\,, \nu\,, T > 0\,$, and $f$ satisfying Assumption \ref{assumption:f}. Let $a\,,b : \R_+ \to \R_+\,$, with $a$ non-decreasing and $b$ locally bounded and measurable, and define 
    $$
    G_0^n(t) := \int_0^t\,g_0^n(s)\,ds\,, \quad g_0^{n}(t) := a(t) + n\,\int_0^t\,K(t-s)\,b(s)\,ds\,, \quad t \geq 0\,,
    $$
    i.e.~$g_0^n \in \mathcal G_{nK}\,$. For each $n \geq 0\,$, let $(W^n\,, X^n)$ be a weak solution to \eqref{eq:Volterra_clock} on $[0\,,T]$ for the input $(K\,, G_0^n\,, f\,, n \lambda\,, n \nu)\,$, which exists by Theorem \ref{thm:weak_existence_Volterra_clock_DDS}. Then, as $n \to + \infty\,$, $(W^n \,, X^n) \Rightarrow (W^*\,, X^*)$ in $D^{\circ}[0\,, T]\,$, where $W^*$ is a standard Brownian motion, and 
    $$
    X^{*}_t := \inf \{s \geq 0\,:\, \lambda\,s - \nu\,W^*_{f(s)} > \bar b(t) \}\,, \quad t \leq T\,,
    $$
    with $\bar b := \int_0^{\cdot}\,b(s)\,ds\,$.
\end{thm}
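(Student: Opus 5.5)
The plan is to rewrite the equation for $X^n$ with the resolvent of $nK$, bringing it into the form of \eqref{eq:Volterra_clock} with a kernel tending to a Dirac mass, and then to invoke Theorem \ref{thm:weak_convergence}. Let $R^n$ denote the resolvent of $n\lambda K$, so that $R^n - n\lambda K * R^n = n\lambda K$, or in the convention of Appendix \ref{app:resolvents}, $R^n = n\lambda K - R^n * (n\lambda K)$. By Lemma \ref{lemma:weak_cv_resolvent_mean_rev}, applied to the completely monotone kernel $\lambda K$ with resolvent of $n(\lambda K)$, we have $R^n(t)\,dt \Rightarrow \delta_0(dt)$.

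The equation for $X^n$ reads
\[
X^n = G_0^n + (n\lambda K) * \bigl(-X^n + \tfrac{\nu}{\lambda} M^n\bigr), \qquad M^n = W^n_{f(X^n)}.
\]
Applying the resolvent identity in the same way as in \eqref{eq:Volterra_clock_resolvent}, we obtain
\[
X^n_t = \widetilde G^n_0(t) + \int_0^t R^n(t-s)\,\tfrac{\nu}{\lambda}\,W^n_{f(X^n_s)}\,ds, \qquad \widetilde G^n_0 := G_0^n - R^n * G_0^n .
\]
This is a Volterra clock with kernel $R^n$, mean-reversion parameter $0$, volatility $\nu/\lambda$, and input $\widetilde G_0^n$.

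The next step is to identify the limit of $\widetilde G_0^n$. Write $G_0^n = A + n K * \bar b$, where $A=\int_0^\cdot a$. Since $R^n$ is the resolvent of $n\lambda K$, we have
\[
(n\lambda K) - R^n*(n\lambda K) = R^n, \quad\text{so}\quad nK*\bar b - R^n*(nK*\bar b) = \tfrac{1}{\lambda} R^n*\bar b .
\]
Therefore $\widetilde G^n_0 = (A - R^n*A) + \frac1\lambda R^n * \bar b$. Because $R^n\,dt\Rightarrow\delta_0$, the functions $A$ and $\bar b$ are continuous, and the total masses $\int_0^T R^n$ are bounded (they converge to $1$), we get $R^n*A\to A$ and $R^n*\bar b\to \bar b$ pointwise. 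Hence $\widetilde G^n_0 \to \bar b/\lambda$, which is continuous.

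The hypotheses of Theorem \ref{thm:weak_convergence} also require that each $\widetilde G^n_0$ be nonnegative and non-decreasing with value $0$ at $0$. As in the proof of Theorem \ref{thm:large_time_limit_non_L1}, $\widetilde G_0^n(t)=\mathbb E[X^n_t]$, because the stochastic term has zero mean: $M^n$ is a martingale started at $0$ and $R^n\ge0$ by Proposition \ref{prop:nonnegative_resolvent_cm}. Since $X^n$ is non-decreasing and starts at $0$, the function $\widetilde G^n_0$ has the required properties. The main obstacle is integrability for Fubini in the step $\mathbb E\int R^n(t-s)M^n_s\,ds=0$. Here I would use the moment bounds from the existence proof, or the BDG bound $\mathbb E\sup|M^n|\le C\,\mathbb E[f(X^n_T)^{1/2}]$, which is finite. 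The alternative is to verify monotonicity directly from the explicit formula, using that $R^n$ is nonnegative with $\int_0^t R^n\le 1$ (so $A-R^n*A$ is non-decreasing for $A$ non-decreasing and convex, and $\bar b$ is non-decreasing). The weak-solution properties in Definition \ref{defn:weak_sol_Volterra clock} are unchanged because $W^n$ and $X^n$ are the same objects. The parameters are constant in $n$ ($\lambda^n=0$, $\nu^n=\nu/\lambda$, $f^n=f$), and $f$ has quadratic growth.

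Theorem \ref{thm:weak_convergence} then gives $(W^n,X^n)\Rightarrow(W^*,X^*)$ on $D^\circ[0,T]$ with
\[
X^*_t=\inf\{s\ge0: s-\tfrac{\nu}{\lambda}W^*_{f(s)}>\bar b(t)/\lambda\}.
\]
Multiplying the inequality by $\lambda>0$ yields exactly the stated formula.
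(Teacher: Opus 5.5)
Your proposal is correct and follows the paper's proof essentially step for step. The resolvent $R^n$ of $n\lambda K$ turns the equation into a Volterra clock with nonnegative kernel $R^n(t)\,dt\Rightarrow\delta_0(dt)$, parameters $(0,\nu/\lambda)$ and input $\widetilde G_0^n=(A-R^n*A)+\lambda^{-1}R^n*\bar b\to\bar b/\lambda$, after which Theorem \ref{thm:weak_convergence} applies.

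Two small remarks (neither affects the conclusion). Your first displayed resolvent identity has the wrong sign; the appendix convention, which you then actually use, is $R^n+n\lambda K*R^n=n\lambda K$. The Fubini step you flag needs no BDG bound, since $M^n$ is a true martingale and hence $\mathbb{E}|M^n_s|\le\mathbb{E}|M^n_T|<\infty$ for $s\le T$. Your direct monotonicity check via $(R^n*a)(t)\le a(t)\int_0^t R^n\le a(t)$ also works.
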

\begin{proof}
    Let $n > 0\,$, we denote by $R^{n}$ the resolvent of $n \lambda K$ (see Appendix \ref{app:resolvents}), so that 
    $$
    X^{n}_t = \widetilde{G_0}^{n}(t) + \frac{\nu}{\lambda}\,\int_0^t\,R^{n}(t-s)\,W^{n}_{f(X^{n}_s)}\,ds\,, \quad t \leq T\,,
    $$
    with $\widetilde{G_0}^{n} := G_0^{n} - R^{n} * G_0^{n}\,$.
    From Proposition \ref{prop:nonnegative_resolvent_cm} and Lemma \ref{lemma:weak_cv_resolvent_mean_rev}, we know that $R^{n} \in L^1_{loc}(\R_+\,, \R_+)\,$, and 
    $$
    R^{n}(t)\,dt \overset{n \to + \infty}{\implies} \delta_0(dt)
    $$
    on $[0\,,T]\,$.
    Moreover, as argued in the proof of Theorem \ref{thm:large_time_limit_non_L1}, $\widetilde{G_0}^{n}$ is non-decreasing, nonnegative, and starts from zero. Using the definitions of $G_0^{n}$ and $R^{n}\,$, we can rewrite it as 
    $$
    \widetilde{G_0}^{n}(t) = \int_0^t\,a(s)\,ds - \int_0^t\,R^{n}(t-s)\,\int_0^s\,a(r)\,dr\,ds + \frac{1}{\lambda}\,\int_0^t\,R^{n}(t-s)\,\int_0^s\,b(r)\,dr\,ds\,, \quad t \leq T\,.
    $$
    The weak convergence of $R^{n}(t)\,dt$ to the measure $\delta_0(dt)$ shows that 
    $$
    \widetilde{G_0}^{n}(t) \overset{n \to + \infty}{\longrightarrow} \frac{1}{\lambda}\,\int_0^t\,b(s)\,ds\,, \quad t \leq T\,,
    $$
    which is continuous.

    Hence, all the conditions of Theorem \ref{thm:weak_convergence} are satisfied, so that $(W^n \,, X^n) \Rightarrow (W^*\,, X^*)$ in $D^{\circ}[0\,,T]\,$, where
    $$
X^*_t := \inf\{s \geq 0\,:\, s - \frac{\nu}{\lambda} W^*_{f(s)} > \frac{1}{\lambda}\,\bar b(t) \}\,, \quad t \leq T\,,
    $$
    which concludes the proof.
\end{proof}

In Figure \ref{fig:convergence_fast_mean_rev}, we illustrate the convergence from Theorem \ref{thm:weak_cv_mean_rev_process}, with 
$K(t) = t^{-1/2}\,$, $a \equiv 1\,$, $b(t) = 100\,e^{-t}\,$, $T = 0.1\,$, and $\lambda = \nu = 1\,$. We compare two time-change functions, $f(x) = x$ and $f(x) = x + \frac{1}{2}x^2\,$. We can clearly observe the convergence to the expected limiting jump process as $n \to + \infty\,$, in both cases. The simulation was achieved using the iVi scheme from \cite{jaber2025simulating}, which is based on an Inverse Gaussian approximation of the increments of $X\,$. In the non-affine case $f(x) = x + \frac{1}{2}x^2\,$, we perform a tangent approximation at each step, which is a classical strategy from the literature on Brownian first crossing of curved boundaries \cite{strassenAlmostSureBehavior1967a, danielsMaximumSizeClosed1974a, ferebeeTangentApproximationOnesided1982a}. In our case, it corresponds to approximating the boundary by its tangent at the hitting time, so that we recover the Inverse Gaussian distribution as a proxy.

\begin{figure}[h]
    \centering
    \includegraphics[scale = 0.5]{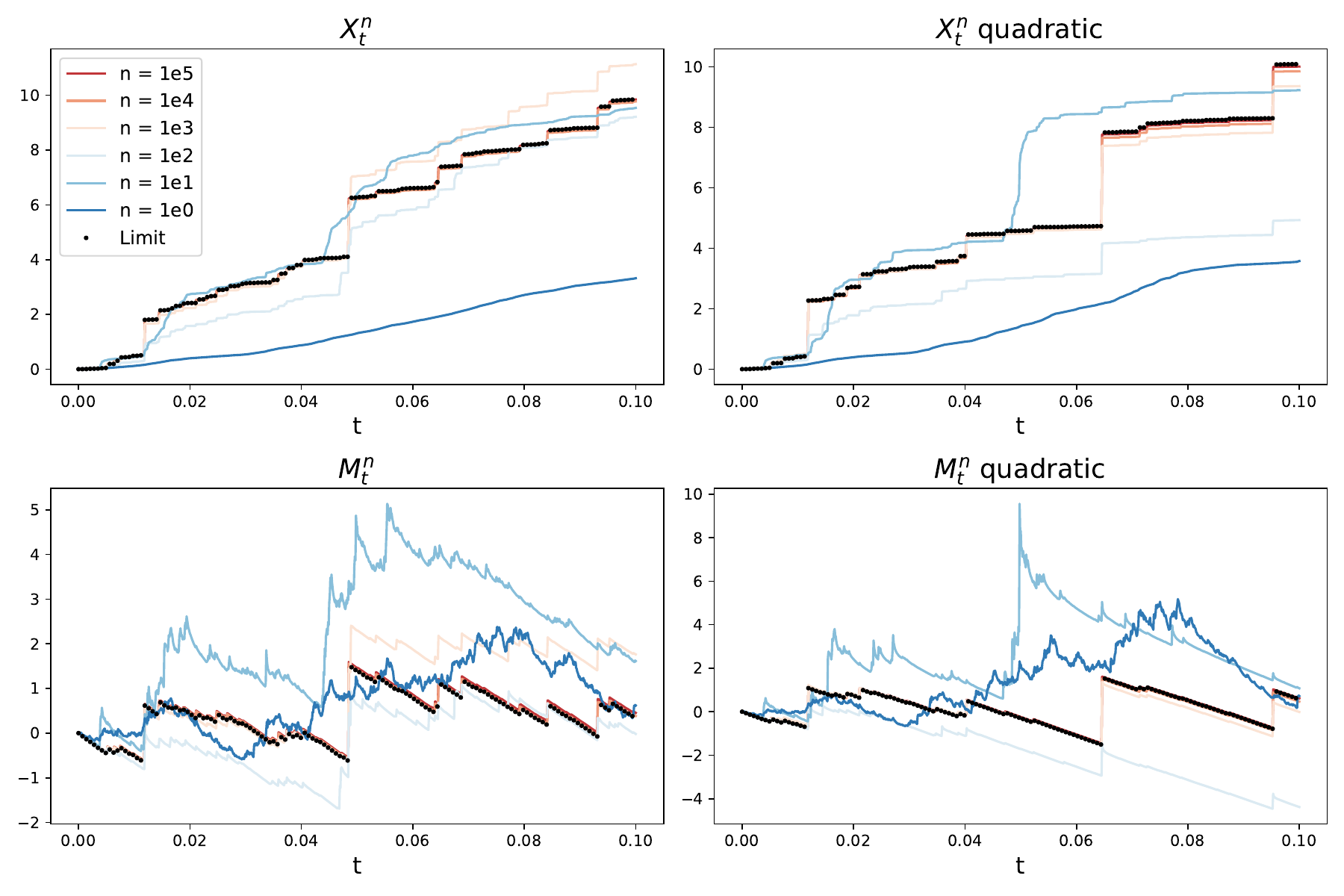}
    
    \caption{Convergence of $X^{n}$ and $M^{n} := W^n_{f(X^{n})}$} from Theorem \ref{thm:weak_cv_mean_rev_process}, with $K(t) = t^{-1/2}\,$, $a \equiv 1\,$, $b(t) = 100 \,e^{-t}\,$, $T = 0.1\,$, and $\lambda = \nu = 1\,$. The black dotted lines correspond to the limiting jump processes simulated with the same random seed. \textbf{Left:} $f(x) = x\,$. \textbf{Right:} $f(x) = x + \frac{1}{2}x^2\,$.
    \label{fig:convergence_fast_mean_rev}
\end{figure}

\subsection{Hyper-rough regime as $\alpha \to 0$}
\label{subsubsec:H_to_-1/2}
In \cite{abi2025hyper}, the integrated rough square-root process
$$
X_t := G_0(t) + \frac{1}{\Gamma(\alpha)}\,\int_0^t\,(t-s)^{\alpha - 1}\,\left(- \lambda \,X_s + \nu\,W_{X_s}\right)\,ds
$$
was shown to converge to an Inverse Gaussian process as $\alpha \downarrow 0\,$. Since the kernel converges weakly to a Dirac measure at zero in this case (see \cite[Lemma~2.4]{abi2025hyper}), this is a direct consequence of Theorem \ref{thm:weak_convergence}. However, our topological framework is more accurate; in particular, it is stronger than the $S$-topology used in \cite{abi2025hyper} (by Proposition \ref{prop:S-conv}) and it offers better control over quantities related to the martingale part $M=W_{X}\,$ in the Volterra equation. Moreover, we can now assert that weak convergence remains true in the non-affine setting, when considering the more general martingale part $M=W_{f(X)}\,$, leading to more general pure-jump limits given by first crossing problems with curved boundaries. This is summarised in the following result.
\begin{thm}
    \label{thm:weak_convergence_hyperrough}
    We consider $\lambda\,, \nu \geq 0\,$, $T > 0\,$, and $f$ satisfying Assumption \ref{assumption:f}. Let $a\,, b : \R_+ \to \R_+\,$, with $a$ non-decreasing and $b$ locally bounded and measurable, and define 
    $$
    G_0^n(t) := \int_0^t\,g_0^n(s)\,ds\,, \quad g_0^n(t) := a(t) + \frac{1}{\Gamma(\alpha_n)}\,\int_0^t\,(t-s)^{\alpha_n - 1}\,b(s)\,ds\,, \quad t \geq 0\,,
    $$
    for a sequence $(\alpha_n)_n \subset (0\,, 1)$ such that $\alpha_n \to 0\,$. For each $n \geq 0\,$, let $(W^n \,, X^n)$ be a weak solution to \eqref{eq:Volterra_clock} on $[0\,, T]$ for the input $(K^n\,, G_0^n\,, f\,, \lambda\,, \nu)\,$, where $K^n(t) := t^{\alpha_n - 1} / \Gamma(\alpha_n)\,$, which exists by Theorem \ref{thm:weak_existence_Volterra_clock_DDS}.
    Then, $(W^n\,, X^n) \Rightarrow (W^*\,, X^*)$ in $D^{\circ}[0\,,T]\,$, where $W^*$ is a standard Brownian motion, and
    $$
    X^*_t := \inf \{s \geq 0\,:\, (1 + \lambda)s - \nu W^*_{f(s)} > \bar a(t) + \bar b(t)\}\,, \quad t \leq T\,,
    $$
    with $\bar a := \int_0^{\cdot}\,a(s)\,ds$ and $\bar b := \int_0^{\cdot}\,b(s)\,ds\,$.
\end{thm}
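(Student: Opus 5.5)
The plan is to reduce the statement to Theorem \ref{thm:weak_convergence}, applied with $f^n\equiv f$, $\lambda^n\equiv\lambda$ and $\nu^n\equiv\nu$. This leaves two hypotheses to check. The first is $K^n(t)\,dt\Rightarrow\delta_0(dt)$ on $[0,T]$. The second is pointwise convergence of $G_0^n$ to a continuous limit, namely $G_0:=\bar a+\bar b$. Existence of the weak solutions $(W^n,X^n)$ is given by Theorem \ref{thm:weak_existence_Volterra_clock_DDS}. This applies because $K^n$ is completely monotone and locally integrable for $\alpha_n\in(0,1)$, and $g_0^n\in\mathcal G_{K^n}$ by construction. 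Moreover, $G_0^n$ is non-decreasing with $G_0^n(0)=0$, since $g_0^n\geq 0$.

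For the kernel, note that $\int_0^t K^n(s)\,ds = t^{\alpha_n}/\Gamma(\alpha_n+1)$. Fix $\delta\in(0,T]$. Since $\Gamma(\alpha_n+1)\to 1$ and $t^{\alpha_n}\to 1$, we get $\int_0^\delta K^n\to 1$ and $\int_\delta^T K^n\to 0$. The mass therefore tends to $1$ and concentrates at $0$. For a continuous test function $\varphi$ on $[0,T]$, split $\int_0^T\varphi K^n$ into the pieces over $[0,\delta]$ and $[\delta,T]$. Using uniform continuity of $\varphi$ near $0$, this gives $\int_0^T \varphi K^n \to \varphi(0)$. Alternatively, one can cite \cite[Lemma~2.4]{abi2025hyper}.

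For the input curve, write $G_0^n(t)=\bar a(t)+\int_0^t (K^n * b)(s)\,ds$. By Fubini, the second term equals $\int_0^t \Big(\int_0^{t-r}K^n(u)\,du\Big) b(r)\,dr = \int_0^t \frac{(t-r)^{\alpha_n}}{\Gamma(1+\alpha_n)}\, b(r)\,dr$. The integrand converges pointwise to $b(r)$ for $r<t$, and it is bounded by $C\sup_{[0,T]}|b|\,(1\vee T)$ because $b$ is locally bounded. Dominated convergence then gives $G_0^n(t)\to\bar a(t)+\bar b(t)$ for each $t\le T$. The limit is continuous as a sum of integrals of locally bounded functions.

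With all assumptions of Theorem \ref{thm:weak_convergence} verified, we conclude that $(W^n,X^n)\Rightarrow(W^*,X^*)$ on $D^\circ[0,T]$, with $X^*_t=\inf\{s\ge0:(1+\lambda)s-\nu W^*_{f(s)}>\bar a(t)+\bar b(t)\}$. There is no real obstacle here. The only mildly delicate point is the weak convergence of the fractional kernels to $\delta_0$, and that follows from the explicit formula for the primitive of $K^n$.
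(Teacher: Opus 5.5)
Your proposal is correct and follows the paper's route. The paper's proof is a one-line reduction to Theorem \ref{thm:weak_convergence}, citing \cite[Lemma~2.4]{abi2025hyper} for $K^n(t)\,dt\Rightarrow\delta_0(dt)$. You additionally give the elementary computation $\int_0^t K^n = t^{\alpha_n}/\Gamma(1+\alpha_n)$ for the kernel limit, and the Fubini/dominated-convergence check that $G_0^n\to\bar a+\bar b$ pointwise with a continuous limit, which the paper leaves implicit.
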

\begin{proof}
    This is direct consequence of Theorem \ref{thm:weak_convergence}, noting that the kernels converge to a Dirac mass by \cite[Lemma~2.4]{abi2025hyper}.
\end{proof}

\section{Proofs}
\label{sec:proofs}
\subsection{Proof of Theorem \ref{thm:weak_existence_Volterra_clock}}
\label{subsec:proof_existence}
In order to prove Theorem \ref{thm:weak_existence_Volterra_clock}, we first study locally square-integrable kernels, so that we can work directly on the dynamics of $Y$ in \eqref{eq:square-integrable_case}. We then extend our result to $K \in L^1_{loc}(\R_+\,, \R_+)$ for the integrated process $X\,$, by an argument of weak convergence of processes, using the shifted kernel. 

We first have the following existence result.
\begin{lem}
    \label{lemma:existence_Volterra_V}
    Let $\lambda \,, \nu\geq 0\,$, $f$ satisfying Assumption \ref{assumption:f}, $K \in L^2_{loc}(\R_+\,, \R_+)$ a completely monotone kernel, and $g_0 \in \mathcal G_K\,$. Then, there exists a continuous, nonnegative weak solution $V$ to 
    \begin{equation}
    \label{eq:Volterra_V}
    Y_t = g_0(t) + \int_0^t\,K(t-s)\,\left(-\lambda\,Y_s\,ds + \nu\,\sqrt{f'(X_s)\,Y_s}\,dB_s \right)\,, \quad t \leq T\,,
    \end{equation}
    where $X_t := \int_0^t\,Y_s\,ds\,$. Additionally, $\sup_{t \leq T} \,\mathbb E \left[Y_t^p \right]$ is finite for all $p \geq 2\,$.
\end{lem}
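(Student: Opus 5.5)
The plan is to adapt the standard weak-existence scheme for Volterra square-root processes (as in \cite{abi2019affine, abi2021weak}) to the path-dependent coefficient $\sigma(t,Y)=\nu\sqrt{f'(X_t)\,(Y_t)^+}$ with $X_t=\int_0^t Y_s\,ds$. We treat this as a stochastic Volterra equation with coefficients that are continuous and non-anticipative in the path of $Y$, and proceed in three steps: truncate and approximate the coefficients, prove moment bounds and Kolmogorov tightness uniformly in the approximation, and pass to the limit while checking nonnegativity.

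\textbf{Step 1 (approximation).} For $N\ge1$, replace $f'$ by the bounded continuous function $f'_N:=f'\wedge N$ and $\sqrt{y^+}$ by $\sqrt{y^+\wedge N}$, and evaluate $X$ through $(Y)^+$. This gives a stochastic Volterra equation with bounded continuous path-functional diffusion coefficient and linear drift. For such an equation a weak solution exists, for instance by an Euler or Picard scheme on a time grid combined with Skorokhod representation, exactly as in \cite{abi2021weaksolution}. Here $K\in L^2_{loc}$ gives the needed increment control $\int_0^h K^2\le Ch^{2\gamma}$ for some $\gamma>0$. This uses the complete monotonicity of $K$: such a $K$ is non-increasing, so $\int_0^h K^2$ is controlled.

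\textbf{Step 2 (uniform moments and nonnegativity).} For nonnegativity, follow \cite[Theorem 3.6]{abi2019affine}, which uses that $g_0\in\mathcal G_K$ and that $K$ is completely monotone (hence its resolvent of the first kind is non-increasing). One shows $Y^N\ge0$ by approximating with a drift pushing upward near $0$ and using the resolvent representation. Since the diffusion coefficient vanishes at $Y=0$ regardless of the factor $f'(X)$, the argument transfers unchanged. For the moments, by BDG and Jensen,
\[
\mathbb E|Y_t|^p \le C\Bigl(1+\int_0^t K(t-s)\,\mathbb E|Y_s|^p\,ds+\Bigl(\int_0^t K(t-s)^2\,\mathbb E\bigl[f'(X_s)^{p/2}Y_s^{p/2}\bigr]ds\Bigr)\Bigr).
\]
Using $f'(x)\le c(1+x)$ and $X_s^{p/2}\le s^{p/2-1}\int_0^s Y_r^{p/2}dr$, Young's inequality bounds $f'(X_s)^{p/2}Y_s^{p/2}$ by $C(1+Y_s^p+\sup_{r\le s}\mathbb E Y_r^p)$. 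A generalized Volterra-Gronwall inequality with kernel $K+K^2$ (both in $L^1_{loc}$) then yields $\sup_N\sup_{t\le T}\mathbb E[|Y^N_t|^p]<\infty$. This step is the main obstacle. Closing the estimate requires that the growth of $f'(X)$ be absorbed, which is exactly why only linear growth of $f'$ is allowed. The delicate point is that the $\sup_{r\le s}$ term must be handled jointly, so I would work with $\phi(t)=\sup_{r\le t}\mathbb E|Y_r|^p$ and use the monotonicity in $t$. Once the bound holds, truncation becomes inactive in the limit, since $\mathbb P(\sup f'(X)\vee Y>N)\to0$.

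\textbf{Step 3 (tightness and passage to the limit).} The moment bounds and the increment estimate
\[
\mathbb E|Y_t-Y_s|^p\le C|t-s|^{\gamma p}
\]
come from the $L^2$-continuity of $K$ and from $g_0$ being locally Hölder on compacts, which holds for the $K*b$ part; the non-decreasing $a$ part is handled as in \cite{abi2019affine}. Kolmogorov's criterion then gives tightness of $(Y^N,B^N)$ in $C[0,T]$. By Skorokhod representation we pass to the limit in the martingale problem. The stochastic integrals converge via the standard lemma on convergence of $\int\sigma_N(Y^N)dB^N$, using uniform convergence of the coefficients on compacts and the uniform moment bounds for uniform integrability. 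The limit $Y$ is continuous and nonnegative, solves \eqref{eq:Volterra_V}, and inherits the moment bounds by Fatou's lemma.
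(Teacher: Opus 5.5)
Your existence and tightness steps rest on two claims. The first (Step 1) is that, because a completely monotone $K\in L^2_{loc}$ is non-increasing, $\int_0^h K^2\le Ch^{2\gamma}$ for some $\gamma>0$. The second (Step 3) is that $L^2$-continuity of translations yields $\mathbb E|Y_t-Y_s|^p\le C|t-s|^{\gamma p}$. Neither follows: monotonicity and $L^2$-continuity give convergence to zero, but no rate. For instance, $K(t)=\int_e^\infty e^{-xt}x^{-1/2}(\log x)^{-1}\,dx$ is completely monotone and in $L^2_{loc}$. However, $K(t)\sim\sqrt{\pi}\,t^{-1/2}|\log t|^{-1}$ as $t\downarrow 0$, so $\int_0^h K^2\sim \pi/|\log h|$, which decays more slowly than every power of $h$. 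With increments of that size, Kolmogorov's criterion gives nothing, and neither does Dudley's entropy bound for the Gaussian convolution $\int_0^t K(t-s)\,dW_s$. So this is not a bookkeeping gap: you need to impose a rate condition such as $\int_0^h K^2+\int_0^T (K(u+h)-K(u))^2\,du=O(h^{2\gamma})$, as is done in the existence theory of \cite{abi2019affine}. In the paper, existence comes from citing an SVE existence theorem, so any such regularity enters only through that citation. Moreover, the lemma is only ever applied to the shifted kernels $K(\cdot+1/n)$, which are bounded and Lipschitz on $[0,T]$, so the rate holds trivially there. State the rate as a hypothesis rather than deriving it from complete monotonicity. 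Relatedly, apply Kolmogorov to $Y^N-g_0$ rather than to $Y^N$: the $a$-part of $g_0$ has no H\"older regularity, and continuity of $Y$ needs $a$ to be continuous.

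Apart from this, your route differs from the paper's in how the path-dependence through $X$ is handled. The paper neither truncates nor works with path-functional coefficients. Instead it lifts to $U=(Y,X)$, which solves a two-dimensional Volterra equation with kernel $\mathrm{diag}(K,1)$, input $(g_0,0)$, drift $b(x)=(-\lambda x_1,x_1)$ and diffusion $\sigma(x)=(\nu\sqrt{x_1^+f'(x_2^+)},0)$. Since $\sqrt{x_1^+f'(x_2^+)}\le\sqrt{c\,x_1^+(1+x_2^+)}\le C(1+|x_1|+|x_2|)$, these coefficients are continuous, have linear growth, and depend only on the current state. An off-the-shelf theorem \cite[Theorem A.1]{abi2019markovian} therefore applies directly. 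Nonnegativity then follows as in \cite[Theorem A.2]{abi2019markovian}, and the moment bounds as in \cite[Lemma 3.1]{abi2019affine}. This lift is also what your Step 1 is missing: \cite{abi2021weaksolution} is formulated for coefficients depending on the current state, not for path functionals, so that citation only becomes legitimate after augmenting the state with $X$. Your moment estimate in Step 2 is correct: it works with $\phi(t)=\sup_{r\le t}\mathbb E|Y_r|^p$ and a Gronwall inequality for the kernel $K+K^2$. It is essentially what adapting \cite[Lemma 3.1]{abi2019affine} amounts to, and it is routine rather than the main obstacle.
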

\begin{proof}
    For any $x = (x_1\,, x_2)^{\top} \in \R^2\,$, we define 
    $$
    b(x) := (-\lambda\,x_1\,, x_1)^{\top}\,, \quad \text{and}\quad \sigma(x) := (\nu\,\sqrt{(x_1)_+\,f'((x_2)_+)}\,, 0)^{\top}\,,
    $$
    where $(\cdot)_+ := \max(\cdot\,, 0)\,$. Both $b$ and $\sigma$ are continuous, and the linear growth assumption on $f'$ ensures that they have linear growth. Hence, \cite[Theorem~A.1]{abi2019markovian} gives existence of a continuous $\R^2$-valued weak solution $U$ to 
    $$
    U_t = \tilde g_0(t) + \int_0^t\,\tilde K(t-s)\,\left(b(U_s)\,ds + \sigma(U_s)\,dB_s\right)\,,
    $$
    where we have used the notations $\tilde g_0 := (g_0\,, 0)^{\top}\,$, and 
    $$
    \tilde K := \left(\begin{array}{cc} K & 0 \\ 0 & 1\end{array}\right)\,.
    $$
    Writing $U := (Y\,, X)^{\top}\,$, we obtain 
    $$
    \begin{cases}
        Y_t = g_0(t) + \int_0^t\,K(t-s)\,\left(-\lambda\,Y_s\,ds + \nu\,\sqrt{f'((X_s)_+)\,(Y_s)_+}\,dB_s \right) \\
        X_t = \int_0^t\,Y_s\,ds
    \end{cases} \,.
    $$
    Thus, it remains to prove that $Y$ is nonnegative. Using the specific structure of $g_0 \in \mathcal G_{K}\,$, this can be achieved using the same arguments as in the proof of \cite[Theorem~A.2]{abi2019markovian}.

    Finally, the finiteness of the moments of $Y$ can be proved by adapting the argument of \cite[Lemma~3.1]{abi2019affine} with a non-constant $g_0\,$.
\end{proof}

Integrating \eqref{eq:Volterra_V} and applying the stochastic Fubini theorem thanks to the local square-integrability of the kernel $K\,$, we obtain

$$
X_t = G_0(t) + \int_0^t\,K(t-s)\,\left(-\lambda\,X_s + \nu\,M_s\right)\,ds\,, \quad t \leq T\,,
$$
where $G_0 := \int_0^{\cdot}\,g_0(s)\,ds\,$, and $M := \int_0^{\cdot}\,\sqrt{f'(X_s)\,Y_s}\,dW_s$ is a continuous martingale, starting from zero, with quadratic variation $\langle M \rangle = f(X)\,$. Using this Volterra equation, we can obtain more precise estimates on $X\,$, given in the following lemma.

\begin{lem}
    \label{lemma:estimates_bar_V_for_existence}
    Let $(Y\,, X)$ be as in Lemma \ref{lemma:existence_Volterra_V}. Then, there exists a constant $C > 0$ depending on $g_0\,$, $\nu\,$, $T$ and $f$ such that 
    \begin{equation}
        \label{eq:second_moment_bar_V_for_existence}
        \mathbb E \left[\sup_{t \leq T}\,(X_t)^2 \right] \leq C\,\left(1 + \int_0^T\,\left|R_{-C\bar K(T) K}(s) \right|\,ds \right)\,,
    \end{equation}
    where $\bar K := \int_0^{\cdot}\,K(s)\,ds\,$, and $R_{-c\bar K(T) K}$ is the resolvent of the second kind of $t \mapsto -C\bar K(T)\,K(t)$ (see Appendix \ref{app:resolvents}).

    Moreover, for $s \leq t \leq T\,$, we have 
    \begin{equation}
    \label{eq:bound_increments_bar_V_for_existence}
    X_t - X_s \leq G_0(t) - G_0(s) + 2\nu\,\|M\|_{\infty}\, \bar K(t-s)\,.
    \end{equation}
\end{lem}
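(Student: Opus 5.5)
For the increment bound \eqref{eq:bound_increments_bar_V_for_existence}, I would first use that $K$ is completely monotone, hence non-increasing and nonnegative. Subtracting the Volterra equation at times $s\le t$ gives
\[
X_t - X_s = G_0(t)-G_0(s) + \int_s^t K(t-u)\,Z_u\,du + \int_0^s \bigl(K(t-u)-K(s-u)\bigr) Z_u\,du,
\]
with $Z := -\lambda X + \nu M$. Since $\lambda X\ge 0$, we have $Z_u\le \nu\|M\|_\infty$, so the first integral is at most $\nu\|M\|_\infty \bar K(t-s)$. In the second integral the kernel difference is $\le 0$, so I bound $-Z_u \le \lambda X_u + \nu\|M\|_\infty$ and must control $\int_0^s (K(s-u)-K(t-u))\,du = \bar K(s)-\bar K(t)+\bar K(t-s)\le \bar K(t-s)$. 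The $\lambda X_u$ contribution has the wrong sign for this crude bound. To handle it, I would instead write $X$ via the resolvent $R_\lambda$ of $\lambda K$, which is nonnegative by Proposition \ref{prop:nonnegative_resolvent_cm}, or argue directly on $Y$. The cleaner route is to note that for $\lambda>0$ the equation $X = G_0 + K*( -\lambda X + \nu M)$ becomes $X = \widetilde G_0 + \frac{\nu}{\lambda}R_\lambda * M$, where $R_\lambda/\lambda$ is again completely monotone with $\int_0^\cdot R_\lambda/\lambda \le \bar K$. The same splitting applied to $\frac{\nu}{\lambda}R_\lambda*M$ with $|M|\le\|M\|_\infty$ then yields the two $\nu\|M\|_\infty\bar K(t-s)$ terms. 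For the $G_0$ part, one uses $\widetilde G_0(t)-\widetilde G_0(s)\le G_0(t)-G_0(s)$, which follows from $G_0$ nondecreasing and $R_\lambda\ge0$; I expect this bookkeeping to be the fiddliest step.

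For the moment bound \eqref{eq:second_moment_bar_V_for_existence}, I would square the equation and use $X\ge0$ to drop the $-\lambda X$ term (it only lowers $X$, since $X$ and $K$ are nonnegative). This gives $X_t \le G_0(T) + \nu \int_0^t K(t-u)\sup_{r\le u}|M_r|\,du$. By Jensen with respect to the finite measure $K(t-u)du$ on $[0,t]$,
\[
\mathbb E\Bigl[\sup_{r\le t}X_r^2\Bigr] \le 2G_0(T)^2 + 2\nu^2\bar K(T)\int_0^t K(t-u)\,\mathbb E\Bigl[\sup_{r\le u}M_r^2\Bigr]du,
\]
where the supremum in $t$ is handled by monotonicity of $X$, so $\sup_{r\le t}X_r = X_t$. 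Doob/BDG and Assumption \ref{assumption:f} give $\mathbb E[\sup_{r\le u}M_r^2]\le 4\mathbb E[f(X_u)]\le c(1+\mathbb E[X_u^2])$. Setting $h(t)=\mathbb E[X_t^2]$, which is finite by the moment bounds of Lemma \ref{lemma:existence_Volterra_V}, I obtain the linear Volterra inequality $h\le C + C\bar K(T)\,K*h$. The generalized Gronwall lemma for convolution inequalities then gives $h(T)\le C(1+\int_0^T|R_{-C\bar K(T)K}|)$. This requires the resolvent comparison from Appendix \ref{app:resolvents}, for which complete monotonicity makes $-R_{-cK}\ge0$, so the iteration is monotone.
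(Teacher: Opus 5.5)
Your second-moment bound follows the paper's argument: drop $-\lambda X$, apply Cauchy--Schwarz against $K(t-u)\,du$, use $\mathbb E[M^2]\le c(1+\mathbb E[X^2])$, then the Volterra Gr\"onwall lemma with the nonpositive resolvent of $-C\bar K(T)K$.

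For the increment bound, your resolvent route is correct but different from the paper's, and heavier. The paper never inverts the equation. It writes the drift contribution as $-\lambda\bigl((K*X)(t)-(K*X)(s)\bigr)$ with $(K*X)(t)=\int_0^t K(r)X_{t-r}\,dr$. This is non-decreasing in $t$ because $K\ge 0$ and $X$ is nonnegative and non-decreasing, so the whole drift term is simply $\le 0$. Only the $M$-terms are split in the form $\int K(t-r)M_r\,dr$, which yields $\nu\|M\|_\infty\bigl(\bar K(s)-\bar K(t)+2\bar K(t-s)\bigr)\le 2\nu\|M\|_\infty\bar K(t-s)$.

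This is exactly the observation you use in your $\widetilde G_0$ step, with $R_\lambda, G_0$ in place of $K, X$. Applying it directly to $\lambda K*X$ removes the ``wrong sign'' issue you ran into.

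Your detour also needs $R_\lambda$ to be non-increasing, and the paper's appendix does not provide this: Proposition \ref{prop:nonnegative_resolvent_cm} gives only $R_\lambda\ge 0$ and $\int_0^\infty R_\lambda\le 1$. The fact is true, because resolvents of completely monotone kernels are completely monotone (e.g.\ \cite[Theorem~5.3.1]{gripenberg1990volterra}), but you should cite it rather than assert it. With that, $R_\lambda/\lambda=K-K*R_\lambda\le K$ and your splitting close the argument for $\lambda>0$, while $\lambda=0$ is covered by your direct splitting.

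As for what each route buys: for the increment bound, yours never uses monotonicity of $X$, only boundedness of $M$ and monotonicity of $G_0$. The paper's is elementary and needs only $K\ge 0$ non-increasing together with $X$ non-decreasing.
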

\begin{proof}
    Since $\lambda\,, K\,, X \geq 0\,$, we have
    $$
    X_t \leq G_0(t) + \nu\,\int_0^t\,K(t-s)\, M_s\,ds\,, \quad t \leq T\,.
    $$
    Applying Jensen's inequality, we get 
    \begin{align*}
    (X_t)^2 &\leq 2\,G_0^2(t) + 2\nu^2\,\bar K(t)\,\int_0^t\,K(t-s)\,M_s^2\,ds \,, \quad t \leq T\,.
    \end{align*}
    Recalling that $M$ is a martingale with quadratic variation $f(X)\,$, and that $f(x) \leq c\,(1 + x^2)$ for some constant $c > 0\,$, taking expectations in the previous gives 
    $$
    \mathbb E \left[(X_t)^2\right] \leq 2\,G_0^2(t) + 2\nu^2\,\bar K(t)\,\int_0^t\,K(t-s)\,\left(c + c\,\mathbb E \left[(X_s)^2 \right] \right)\,ds \,, \quad t \leq T\,.
    $$
    Since $G_0$ is continuous it is bounded on $[0\,,T]\,$, so that using the fact that $K \geq 0\,$, we obtain 
    $$
    \mathbb E \left[(X_t)^2 \right] \leq C + C\,\bar K(T)\,\int_0^t\,K(t-s)\,\mathbb E \left[(X_s)^2\right]\,ds \,, \quad t \leq T\,,
    $$
    for some constant $C > 0\,$, depending on $g_0\,$, $\nu\,$, $T$ and $c\,$. Since $K$ is completely monotone, it is non-decreasing. Hence, the resolvent of the second kind of $-C \bar K(T) K\,$, that we denote by $R_{-C\bar K(T) K}\,$, is nonpositive by virtue of Proposition \ref{prop:nonpositive_resolvent}. We can therefore make use of the Gr\"onwall inequality for linear Volterra equations from \cite[Lemma~9.8.2]{gripenberg1990volterra}, which gives 
    $$
    \mathbb E \left[(X_t)^2\right] \leq C\,\left(1 + \int_0^T\,|R_{-C\bar K(T) K}(s)|\,ds\right)\,, \quad t \leq T\,.
    $$
    Since $X$ is non-decreasing, its supremum over $[0\,,T]$ is $X_T\,$, which proves \eqref{eq:second_moment_bar_V_for_existence}.

    We now move to the second inequality. For $s \leq t \leq T\,$, we decompose the increment of $X$ into 
    \begin{align*}
        X_t - X_s &= G_0(t) - G_0(s) - \lambda\,\int_0^t\,K(r)\,X_{t-r}\,dr + \lambda\,\int_0^s\,K(r)\,X_{s-r}\,dr \\
        &\quad + \nu\,\int_0^t\,K(t-r)\,M_r\,dr - \nu\,\int_0^s\,K(s-r)\,M_r\,dr \\
        &\leq G_0(t) - G_0(s) - \lambda\,\int_0^s\,K(r)\,\left(X_{t-r}- X_{s-r}\right)\,dr \\
        &\quad + \nu\,\int_0^s\,(K(t-r) - K(s-r))\,M_r\,dr + \nu\,\int_s^t\,K(t-r)\,M_r\,dr\\
        &\leq G_0(t) - G_0(s) + \nu\,\int_0^s\,(K(t-r) - K(s-r))\,M_r\,dr + \nu\,\int_s^t\,K(t-r)\,M_r\,dr\,, 
    \end{align*}
        since $\lambda\,, K \geq 0$ and $X$ is non-decreasing. We deduce that
    \begin{align*}
        X_t - X_s &\leq G_0(t) - G_0(s) + \nu\,\|M\|_{\infty}\,\left(\int_0^s\,|K(t-r) - K(s-r)|\,dr + \int_s^t\,K(t-r)\,dr \right) \\
        &= G_0(t) - G_0(s) + \nu\,\|M\|_{\infty}\,\left(\bar K(s) - \bar K(t) + 2 \,\bar K(t-s) \right) \\
        &\leq G_0(t) - G_0(s) + 2\nu\,\|M\|_{\infty} \,\bar K(t-s)\,,
    \end{align*}
    using the fact that $K$ is non-increasing and nonnegative.
\end{proof}

We are now ready to prove Theorem \ref{thm:weak_existence_Volterra_clock}.
\begin{proof}[Proof of Theorem \ref{thm:weak_existence_Volterra_clock}]
    Let us fix $T > 0\,$, $\lambda\,, \nu \geq 0\,$,  $f$ satisfying Assumption \ref{assumption:f}, $K \in L^1_{loc}(\R_+\,, \R)$ completely monotone, and $g_0 \in \mathcal G_K\,$. Then, for any $n \geq 1\,$, $K^n := K(\cdot + 1/n)$ is in $L^2_{loc}(\R_+\,,\R_+)$ and completely monotone. Moreover, since $g_0 \in \mathcal G_K\,$, we can write it as 
    $$
    g_0(t) = a(t) + \int_0^t\,K(t-s)\,b(s)\,ds\,, \quad t \geq 0\,,
    $$
    with $a\,, b \geq 0\,$, $a$ non-decreasing and $b$ locally bounded. Defining
    $$
    g_0^n(t) := a(t + 1/n) + \int_0^t\,K^n(t-s)\,b(s)\,ds\,, \quad t \geq 0\,, \quad n \geq 1\,,
    $$
    we know from \cite[Remark~2.14]{abi2021weak} that $g_0^n \in \mathcal G_{K^n}$ and $\lim_{n \to \infty}\,\int_0^t\,g_0^n(s)\,ds = \int_0^t\,g_0(s)\,ds\,$.
    
    We can thus apply Lemma \ref{lemma:existence_Volterra_V} for each $n \geq 1\,$, giving the existence of a filtered probability space $\left( \Omega^n\,, \mathcal F^n\,, \left(\mathcal F^n_t\right)_{t \leq T}\,, \mathbb P^n\right)$ satisfying the usual conditions, admitting a nonnegative, continuous process $Y^n$ and a standard Brownian motion $B^n\,$, such that 
    $$
    Y^n_t = g_0^n(t) + \int_0^t\,K^n(t-s)\,\left(-\lambda\,Y^n_s\,ds + \nu\,\sqrt{f'(X^n_s)\,Y^n_s}\,dB^n_s\right)\,, \quad t \leq T\,,
    $$
    with $X^n_t := \int_0^t\,Y^n_s\,ds\,$. As previously, integrating this relation gives 
    $$
    X^n_t = G_0^n(t) + \int_0^t\,K^n(t-s)\,\left(-\lambda\, X^n_s + \nu\,M^n_s\right)\,ds \,, \quad t \leq T\,,
    $$
    where $G_0^n = \int_0^{\cdot}\,g_0^n(s)\,ds\,$, and $M^n := \int_0^{\cdot}\,\sqrt{f'(X^n_s)\,Y^n_s}\,dB^n_s$ is a continuous martingale, starting from zero, with quadratic variation $\langle M^n \rangle = f(X^n)\,$. 

    We aim to prove that $(X^n, M^n)_{n \geq 1}$ converges weakly, along a subsequence, to a solution of \eqref{eq:dynamic_bar_V}. The proof is based on tightness of the sequence, before identifying the weak accumulation points.

    \paragraph{Tightness.}We first prove that $\left(X^n\right)_{n \geq 1}$ is C-tight (see \cite[Proposition~VI.3.26]{jacod2013limit}). 
    
    Since $K \in L^1_{loc}(\R_+\,, \R)\,$, $K^n = K(\cdot + 1/n)$ converges to $K$ in $L^1_{loc}(\R_+\,, \R)\,$. Hence $\bar K^n$ converges pointwise to $\bar K\,$. From \eqref{eq:second_moment_bar_V_for_existence}, there exists $C > 0$ such that 
    \begin{equation}
\label{eq:sup_n_finite_second_moment_bar_V_for_existence}
    \sup_{n \geq 1}\,  \mathbb E \left[\sup_{t \leq T}\,\left(X^n_t\right)^2 \right] \leq \sup_{n \geq 1}\, C\, \left(1 + \int_0^T\, |R_{-C \bar K^n(T) K^n}(s)|\,ds \right)\, < + \infty\,,
    \end{equation}
    using the $L^1$ continuity of the resolvent of the second kind (see \cite[Theorem~2.3.1]{gripenberg1990volterra}). This implies that $\sup\limits_{n \geq 1}\, \mathbb P^n\left(\sup\limits_{t \leq T} X^n_t \geq R \right) \overset{R \to + \infty}{\longrightarrow} 0\,$. Moreover, using \eqref{eq:bound_increments_bar_V_for_existence}, we have 
    $$
    \mathbb E \left[w(X^n, \delta) \right] \leq w(G_0, \delta) + 2\nu\,\mathbb E \left[\| M^n\|_{\infty} \right]\, \bar K^n(\delta)\,, \quad \delta > 0\,,
    $$
    where $w(\cdot\,, \cdot)$ is the modulus of continuity.
    We can then use Doob's maximal inequality for $\mathbb E \left[\|M^n\|^2_{\infty} \right] \leq 4\,\mathbb E \left[f(X^n_T)\right]\,$, so that by the quadratic growth assumption of $f$ and \eqref{eq:sup_n_finite_second_moment_bar_V_for_existence}, we obtain $\sup_{n \geq 1}\,\mathbb E\left[\|M^n\|_{\infty}^2\right] < + \infty\,$. Moreover, $w(G_0, \delta) \overset{\delta \to 0^+}{\longrightarrow} 0$ by continuity, and $\limsup_{n \to + \infty} \bar K^n(\delta) = \bar K(\delta) \overset{\delta \to 0^+}{\longrightarrow} 0\,$. Hence $\lim\limits_{\delta \to 0^+}\,\limsup\limits_{n \to \infty}\,\mathbb E \left[w(X^n, \delta) \right] = 0\,$, proving that $\left(X^n\right)_{n \geq 1}$ is C-tight.

    We now prove that $\left(M^n\right)_{n \geq 1}$ is C-tight. Since it is continuous, we only have to prove it is tight \cite[Proposition~VI.3.26]{jacod2013limit}. For this, we have to show that the sequence of its quadratic variations $\left(f(X^n)\right)_{n \geq 1}$ is C-tight \cite[Theorem~IV.4.13]{jacod2013limit}. We already have 
    $$
    \sup_{n \geq 1}\,\mathbb E \left[\sup_{t \leq T}\, f(X^n_t) \right] < + \infty
    $$
    in virtue of \eqref{eq:sup_n_finite_second_moment_bar_V_for_existence}, since $f$ has quadratic growth.
    More precisely, there exists $c_1 > 0$ such that $f'(x) \leq c_1\,(1 + x)\,$, so that we can bound 
    $$
    f(X^n_t) - f(X^n_s) \leq c_1\,\left(X^n_t - X^n_s + \frac{(X^n_t)^2}{2} - \frac{(X^n_s)^2}{2}\right) \leq c_1\,(1 + X^n_T) \,w(X^n, \delta)
    $$
    for any $s \leq t \leq T$ such that $t -s \leq \delta\,$, since $X^n$ is non-decreasing.
    Thus, we can prove that $\lim\limits_{\delta \to 0^+}\,\limsup\limits_{n \geq 1}\,\mathbb E \left[w(f(X^n), \delta) \right] = 0$ with similar arguments than for $\left(X^n\right)_{n \geq 1}$, using the fact that 
    $$
    \sup_{n \geq 1}\, \mathbb E \left[X^n_T\,\left\|M^n\right\|_{\infty} \right] \leq \sup_{n \geq 1}\, \sqrt{\mathbb E \left[\left(X^n_T\right)^2 \right]\, \mathbb E \left[\|M^n\|_{\infty}^2 \right]} < + \infty\,.
    $$
    Hence, the sequence $\left(X^n\,, M^n\,, f(X^n)\right)_{n \geq 1}$ is C-tight.

    \paragraph{Identifying the limit.} By the Prokhorov theorem, tightness implies weak convergence along a subsequence (that we do not relabel here) $\left(X^n\,, M^n\,, f(X^n) \right) \overset{n \to \infty}{\implies} (X\,, M\,, f(X))\,$. These limits are almost surely continuous processes. We have already seen that $\sup\limits_{n \geq 1}\, \sup\limits_{t \leq T} \mathbb E \left[\left(M^n_t\right)^2\right] < + \infty$ thanks to Doob's maximal inequality, thus $\left(M^n_t\right)_{t \leq T, n \geq 1}$ is uniformly integrable and $M$ is martingale with respect to the filtration generated by $(X\,, M)$ in virtue of \cite[Proposition IX.1.12]{jacod2013limit}. Moreover, \cite[Corollary~VI.6.29]{jacod2013limit} shows that $\langle M \rangle = f(X)\,$.

    Finally, applying Skorokhod's representation theorem, we have the existence of a probability space $\left(\Omega\,, \mathcal F\,, \mathbb P\right)$ admitting copies of $\left(X^n, M^n\,, f(X^n) \right)_{n \geq 1}$ and $\left(X\,, M\,, f(X)\right)$ (again, not relabeled) such that the convergence holds almost surely in the uniform topology (we recall that the Skorokhod topology coincides with the uniform topology for continuous limits). For all $t \leq T\,$, we have
    \begin{align*}
    \int_0^t\,K^n(t-s)\,Z^n_s\,ds - \int_0^t\,K(t-s)\,Z_s\,ds &= \int_0^t\,K(t-s)\,\left(Z^n_s - Z_s\right)\,ds \\
    &\quad+ \int_0^t \left(K^n(t-s) - K(t-s)\right)\,Z^n_s\,ds \,, \quad n \geq 1\,,
    \end{align*}
    with $Z^n := - \lambda X^n + \nu\,M^n$ and $Z := -\lambda\,X + \nu\,M$\,, which converges to zero in the limit $n \to + \infty$ thanks to the dominated convergence theorem. Also, we know that $(G_0^n)_{n \geq 1}$ converges pointwise to $G_0 := \int_0^{\cdot}\,g_0(s)\,ds\,$. By continuity of the limiting processes, 
    $$
    X_t = G_0(t) + \int_0^t\,K(t-s)\,\left(-\lambda \,X_s + \nu\,M_s\right)\,ds \,, \quad t \leq T\,, \quad \mathbb P-a.s.\,,
    $$
    which concludes the proof of weak existence.
\end{proof}

\subsection{Proof of Theorem \ref{thm:weak_convergence}}
\label{subsec:proof_convergence}
Before proving the weak convergence of processes of Theorem \ref{thm:weak_convergence}, we begin with studying the limiting behavior of deterministic equations similar to \eqref{eq:Volterra_clock}. We recall that the space $D^{\circ}[0\,,T]$ is defined in Section \ref{sect:func_conv_frame}. In the following lemma, $G_0^n\,$, $K^n\,$, $f^n$ and $f$ are as in Theorem \ref{thm:weak_convergence}.

\begin{lem}
    \label{lemma:limit_deterministic_equation}
    Let $x^n := g^n \circ f^n \circ \tau^n\,$, and $x := g \circ f \circ \tau\,$, where $(g^n\,, \tau^n) \to (g \,, \tau)$ in $D^{\circ}[0\,,T]\,$. Suppose also that $\tau(T) = \tau(T-)\,$. Let $\tau^n$ and $x^n$ be such that $\tau^n(0) = x^n(0) = 0\,$, and 
    $$
    \tau^n(t) = G_0^n(t) + \int_0^t\,K^n(t-s)\,(-\lambda^n\, \tau^n(s) + \nu^n\,x^n(s))\,ds\,, \quad t \leq T\,,
    $$
    with $\lambda^n \to \lambda$ and $\nu^n \to \nu\,$.
    Then we have 
    $$
    (1 + \lambda)\,\tau(t) = G_0(t)  + \nu\,x(t)\,, \quad t \leq T\,.
    $$
\end{lem}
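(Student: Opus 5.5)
The strategy is to pass to the limit in the Volterra equation at every continuity point $t$ of $\tau$, and then extend the identity to all of $[0,T]$ by right-continuity. Write $z^n := -\lambda^n\tau^n + \nu^n x^n$ and $z := -\lambda\tau + \nu x$. The key input is Proposition \ref{prop:conv_compositions}, applied to the pair $(g^n\circ f^n,\tau^n)$. Since $f^n\to f$ locally uniformly and $g^n\to g$ uniformly on compacts, the composition $g^n\circ f^n$ converges to $g\circ f$ uniformly on compacts. Hence $(g^n\circ f^n,\tau^n)\to(g\circ f,\tau)$ in $D^\circ[0,T]$. Proposition \ref{prop:conv_compositions} then yields three facts: $x^n$ is uniformly bounded in $n$ and $t$; $x^n\to_{\text{l.u.}} x$ on $\mathbb I=[0,T]\setminus D(\tau)$; and $x^n(T)\to x(T)$. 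For $\tau^n$ itself, $M_1$ convergence of monotone paths gives $\tau^n(t)\to\tau(t)$ at continuity points, together with $\sup_n \tau^n(T)<\infty$. The same local uniformity holds for $\tau^n$ at continuity points of $\tau$, by monotonicity (sandwiching between nearby continuity points). So $z^n\to_{\text{l.u.}} z$ on $\mathbb I$, with $\sup_n\|z^n\|_\infty<\infty$.

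Next, fix $t\in(0,T)\setminus D(\tau)$ and $\delta>0$, and split the convolution as
\[
\int_0^t K^n(t-s)z^n(s)\,ds=\int_{t-\delta}^{t}K^n(t-s)z^n(s)\,ds+\int_0^{t-\delta}K^n(t-s)z^n(s)\,ds .
\]
The second term is bounded by $\sup_n\|z^n\|_\infty\int_\delta^T K^n(r)\,dr$. Because $K^n(r)\,dr\Rightarrow\delta_0$ on $[0,T]$, this tends to $0$ for a.e. $\delta$, taken outside the countably many atoms; we use $\int_{[\delta,T]}\to 0$ since $\delta_0$ puts no mass there. In the first term, we replace $z^n(s)$ by $z(t)$. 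The resulting error is bounded by $\int_0^\delta K^n\cdot\sup_{s\in B(t,\delta)}|z^n(s)-z(t)|$. Here $\int_0^\delta K^n\to 1$, and the limsup of the supremum vanishes as $\delta\to0$ by local uniform convergence at $t$. Finally, $z(t)\int_0^\delta K^n(r)\,dr\to z(t)$. Sending $n\to\infty$ and then $\delta\to0$, and using $G_0^n(t)\to G_0(t)$, gives
\[
\tau(t)=G_0(t)-\lambda\tau(t)+\nu x(t),
\]
which is the claim at $t$.

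To extend to all $t\in[0,T)$, note that $\mathbb I$ is co-countable and hence dense. Both sides of the identity are right-continuous: $\tau$ and $x=g\circ f\circ\tau$ are càdlàg, and $G_0$ is continuous. So the identity holds on $[0,T)$. At $t=T$, the hypothesis $\tau(T)=\tau(T-)$ makes $T$ a continuity point, and the argument above applies verbatim, using only left neighbourhoods. The case $t=0$ is trivial, since $\tau(0)=0=x(0)$ by $M_1$/pointwise convergence at $0$, and $G_0(0)=0$.

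The main obstacle is the local-uniform control in the first term, i.e. making sure that $z^n$ is uniformly close to $z(t)$ on a shrinking window where $K^n$ concentrates. This is precisely what Proposition \ref{prop:conv_compositions} supplies for the time-changed component $x^n$, and it is why we need $t\notin D(\tau)$. A secondary technical point is handling the mass of $K^n$ near the boundary of $[0,\delta]$ under weak convergence, which is resolved by choosing $\delta$ off the atoms.
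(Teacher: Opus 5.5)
Your proof is correct and follows essentially the same route as the paper. You pass to the limit in the convolution at continuity points of $\tau$ by splitting at $\delta$: uniform boundedness controls the tail, and the local uniform convergence from Proposition~\ref{prop:conv_compositions} controls the part near $t$. You then extend by right-continuity, with $t=T$ covered by $\tau(T)=\tau(T-)$. The only differences are cosmetic: you apply Proposition~\ref{prop:conv_compositions} to $(g^n\circ f^n,\tau^n)$ rather than to $(g^n,f^n\circ\tau^n)$. Your restriction to $\delta$ ``off the atoms'' is also unnecessary, because $\delta_0$ charges no point $\delta>0$, so $\int_0^\delta K^n\to 1$ and $\int_\delta^T K^n\to 0$ hold for every $\delta\in(0,T)$.
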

\begin{proof}
    Firstly, Proposition \ref{prop:conv_compositions} gives $\tau^n(0) \to \tau(0)$ and $x^n(0) \to x(0)\,$, so $\tau(0) = x(0) = G_0(0) = 0\,$, and hence the result trivially holds at $t = 0\,$. Next, we recall that $G_0^n(t) \to G_0(t)$ for all $t \in [0\,,T]\,$. Turning to the integral, we set
    $$
    y^n := - \lambda^n\, \tau^n + \nu^n\,x^n \quad \text{and} \quad y := - \lambda \,\tau + \nu\,x\,.
    $$
    We recall that for non-decreasing functions, convergence in the $M_1$ topology is equivalent to pointwise convergence on a dense subset of $[0\,,T]\,$, including $0$ and $T$ (see \cite[Corollary~12.5.1]{whitt2002stochastic}).
    We thus still have $f^n \circ \tau^n \to f \circ \tau$ in the $M_1$ topology, by the locally uniform convergence of $f^n$ to $f\,$. Hence $\sup_{n}\,\sup_{s \leq T}\,|x^n(s)| < + \infty$ from Proposition \ref{prop:conv_compositions}. Moreover, $\sup_n \sup_{s \leq T}\,|\tau^n(s)| = \sup_n \tau^n(T) < + \infty\,$, since $\tau^n(T) \to \tau(T)\,$. This leads to
    $$
    \sup_n \,\sup_{s \leq T}\,|y^n(s)| < + \infty\,.
    $$
    For any given $\delta \in (0\,, t)\,$, we then have 
    \begin{align}
    \label{eq:bound_cv_integral_deterministic}
        \left|\int_0^t\,K^n(s)\,y^n(t-s)\,ds - y(t) \right| & \leq \int_0^{\delta}\,K^n(s)\,|y^n(t-s) - y(t)|\,ds \nonumber\\
        &\quad + |y(t)|\,\left|\int_0^{\delta}\,K^n(s)\,ds - 1\right| + \int_{\delta}^t\,K^n(s)\,|y^n(t-s)|\,ds\,.
    \end{align}
    The second term vanishes in the limit thanks to the convergence $(K^n)_n\,$, and the last one is bounded by 
    $$
    \sup_{s \leq t}\,|y^n(s)|\,\int_{\delta}^t\,K^n(s)\,ds \overset{n \to \infty}{\longrightarrow} 0\,,
    $$
    thanks to the uniform bound we derived on $(y^n)_n\,$, and the convergence of $(K^n)_n\,$.

    Now, fix an arbitrary $t \notin D(\tau)\,.$ It follows from Proposition \ref{prop:conv_compositions} that, for any $\varepsilon > 0\,$, we can find a small enough $\delta \in (0\,, t)$ so that 
    $$
    \limsup_{n \to \infty}\, \sup\{|y^n(s) - y(t)|\,:\,s \in[t - \delta\,, t] \} < \varepsilon\,,
    $$
    and hence
    $$
    \int_0^{\delta}\,K^n(s)\,|y^n(t-s) - y(t)|\,ds \leq \varepsilon\,\int_0^{\delta}\,K^n(s)\,ds \leq \varepsilon\,,$$
    for large enough $n \geq 0\,$. Since $\varepsilon > 0$ was arbitrary, this means that the first term in the right-hand side of  \eqref{eq:bound_cv_integral_deterministic} also vanishes. Thus we have shown that 
    $$
    G_0^n(t) + \int_0^t\,K^n(t-s)\,\left(- \lambda^n\, \tau^n(s) + \nu^n\,x^n(s)\right)\,ds \to G_0(t) - \lambda\, \tau(t) + \nu\,x(t)
    $$
    for all $t \notin D(\tau)\,$. By assumption, this includes $t = T\,$. Since we also have $\tau^n(t) \to \tau$ for $t \in \{0\,,T\}$ and $t \notin D(\tau)\,$, we conclude that the result holds for any $t \in [0\,,T]\,$, by right-continuity of all the limiting quantities.
\end{proof}

As expected, the convergence of the sequence of kernels to a Dirac measure leads to a pointwise equation in the limit, which no longer has a Volterra structure. 

We now aim to prove tightness of our processes. Since $X^n$ is non-decreasing, and starting from zero for all $n \geq 0\,$, the tightness criterion in the $M_1$ topology from \cite[Theorem~12.12.3]{whitt2002stochastic} reduces to 
$$
\lim_{R \to + \infty}\,\limsup_{n \to + \infty}\,\mathbb P(X^n_T > R) = 0 \, \quad \text{and} \quad \lim_{\delta \to 0^+}\,\limsup_{n \to + \infty}\,\mathbb P(X^n_{\delta} \vee (X^n_T - X^n_{T-\delta}) > \eta) = 0\,, \quad \eta > 0\,.
$$

\begin{lem}[$M_1$ tightness]
    \label{lem:M1_tightness_time_changed}
    We have
    \begin{equation}
        \label{eq:expectation_increment_bar_V}
        \mathbb E \left[X^n_t - X^n_s\right] \leq G_0^n(t) - G_0^n(s)\,, \quad s \leq t \leq T\,, \quad n \geq 0\,.
    \end{equation}
    In particular, since $G_0^n$ converges pointwise to $G_0$ which is continuous in $0$ and $T\,$,
    $$
    \sup_{n \geq 1}\, \mathbb E \left[X^n_T\right] < + \infty\,, \quad \text{and}\quad \lim_{\delta \to 0^+}\, \limsup_{n \to \infty}\, \mathbb E \left[X^n_{\delta}\right] = \lim_{\delta \to 0^+}\, \limsup_{n \to \infty}\, \mathbb E \left[X^n_{T} - X^n_{T- \delta}\right] = 0\,,
    $$
    and $\left(X^n\right)_{n \geq 1}$ is tight on $D[0\,,T]$ for the $M_1$-topology.
\end{lem}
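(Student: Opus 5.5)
Write $M^n_t := W^n_{f^n(X^n_t)}$ and take expectations in the Volterra equation \eqref{eq:Volterra_clock} for $X^n$. By Definition \ref{defn:weak_sol_Volterra clock}(ii), $M^n$ is a martingale in its own filtration with $M^n_0 = W^n_0 = 0$, so $\mathbb E[M^n_s]=0$ for every $s$. This requires $\mathbb E|M^n_s|<\infty$, which is part of being a martingale. If one worries about integrability of $X^n$ itself, I would first localise, or use the nonnegativity $X^n\ge 0$ together with monotone convergence. The linear term is handled the same way. Fubini then gives, with $m^n(t):=\mathbb E[X^n_t]$,
\[
m^n(t) = G_0^n(t) - \lambda^n\int_0^t K^n(t-s)\, m^n(s)\,ds .
\]
To justify the use of Fubini on the $M^n$ term, I would bound $\mathbb E\int_0^t K^n(t-s)|M^n_s|\,ds$ using the integrability of $\sup_{s\le t}|M^n_s|$. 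When $\lambda^n>0$, this sup bound also gives integrability of $X^n$, since $0\le X^n_t\le G^n_0(t)+\nu^n\int K^n|M^n|$.

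Next I derive the increment bound directly, without solving the resolvent equation. Subtract the identity at time $s$ from the identity at time $t$ and change variables $r\mapsto t-r$, exactly as in the proof of \eqref{eq:bound_increments_bar_V_for_existence}:
\[
m^n(t)-m^n(s) = G_0^n(t)-G_0^n(s) - \lambda^n\Bigl(\int_0^s K^n(r)\bigl(m^n(t-r)-m^n(s-r)\bigr)dr + \int_s^t K^n(r)\,m^n(t-r)\,dr\Bigr).
\]
Since $X^n$ is non-decreasing, $m^n$ is non-decreasing and nonnegative. Together with $K^n\ge 0$ and $\lambda^n\ge 0$, the bracket is nonnegative, so dropping it yields \eqref{eq:expectation_increment_bar_V}. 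This avoids any need for complete monotonicity of $K^n$; only nonnegativity is used.

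The consequences then follow. Taking $s=0$ gives $\sup_n \mathbb E[X^n_T]\le \sup_n G_0^n(T)<\infty$, since $G_0^n(T)\to G_0(T)$. Moreover,
\[
\limsup_n \mathbb E[X^n_\delta]\le G_0(\delta)\to G_0(0)=0, \qquad \limsup_n \mathbb E[X^n_T-X^n_{T-\delta}]\le G_0(T)-G_0(T-\delta)\to 0,
\]
by pointwise convergence and continuity of $G_0$. Markov's inequality converts these bounds into the two probability conditions of the reduced criterion from \cite[Theorem~12.12.3]{whitt2002stochastic}, which gives $M_1$-tightness, because the paths are monotone and start at $0$.

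The only delicate point is the integrability and Fubini justification, that is, ensuring $\mathbb E[M^n_s]=0$ can be interchanged with the convolution. I would handle it by the sup-martingale moment bound above. If that bound is unavailable, I would stop at $\sigma_k=\inf\{t:X^n_t\ge k\}$ and pass $k\to\infty$ by monotone convergence on the nondecreasing $X^n$.
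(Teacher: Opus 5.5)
Your proof is correct and is essentially the paper's argument. The paper writes the same increment decomposition pathwise, drops the $\lambda^n$-terms using $K^n\ge 0$ and the monotonicity of $X^n$, and only then takes expectations to remove the martingale terms. You take expectations first and use the monotonicity of $m^n$, with identical ingredients.

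One remark on your ``delicate point'': do not appeal to integrability of $\sup_{s\le t}|M^n_s|$. In the paper that bound is derived from this very lemma via BDG, so using it here would be circular. You do not need it either. Conditional Jensen gives $\mathbb E|M^n_s|\le \mathbb E|M^n_T|$, hence $\mathbb E\int_0^t K^n(t-s)|M^n_s|\,ds\le \mathbb E|M^n_T|\int_0^t K^n(r)\,dr<\infty$. This justifies Fubini and also gives $\mathbb E[X^n_t]<\infty$ for every $\lambda^n\ge 0$, with no localisation.
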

\begin{proof}
    We have 
    \begin{align*}
        X^n_t - X^n_s &= G_0^n(t) - G_0^n(s) + \int_0^s\,K^n(r)\,\left(\lambda\,X^n_{s-r} - \lambda\,X^n_{t-r} + \nu\,M^n_{t-r} - \nu\,M^n_{s-r}\right)\,dr \\
        &\quad + \int_s^t\,K^n(r)\,\left(- \lambda\,X^n_{t-r} + \nu\,M^n_{t-r}\right)\,dr \,, \quad s \leq t \leq T\,, \quad n \geq 0\,,
    \end{align*}
    so that using $\lambda\,, K^n \geq 0$ and the non-decreasing property $X^n\,$, we obtain
    \begin{align*}
    X^n_t - X^n_s &\leq G_0^n(t) - G_0^n(s) + \nu\,\int_0^s\,K^n(r)\,\left(M^n_{t-r} - M^n_{s-r}\right)\,dr \\
    &\quad+ \nu\,\int_s^t\,K^n(r)\,M^n_{t-r}\,dr\,, \quad s \leq t \leq T\,, \quad n \geq 0\,.
    \end{align*}
    Taking expectations on both sides proves the result, since $M^n$ is a martingale starting from zero.
\end{proof}

The identification of the limiting process in Theorem \ref{thm:weak_convergence} relies on proving that the limit of $W^n \circ f^n(X^n)$ is a martingale. More precisely, Lemma \ref{lemma:limit_deterministic_equation} allows us to identify the limiting equation, but martingality is required to prove that the limiting process is the first time satisfying such equation. We will thus use the following proposition.

\begin{prop}
\label{prop:martingality_limit}
	Let $Z^n$ be continuous martingales and let each $A^n_t$ be a stopping time for $t\geq0$ and $n\geq 1\,$, such that $A^n_0 = 0\,$. If $(Z^n,A^n) \Rightarrow (Z,A)$ in $D^\circ[0,T]$ and $M^n=Z^n\circ A^n$ satisfies $\sup_{n
	\geq 1}\mathbb{E}[|\sup_{t\in[0,T]}M^n_t|]<\infty$ then $M=Z\circ A$ is a martingale, and 
    $$
    \mathbb E \left[\sup_{t \leq A_T}\,|Z_t| \right] < + \infty\,.
    $$
\end{prop}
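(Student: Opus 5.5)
Via Skorokhod's representation theorem, I will assume $(Z^n,A^n)\to(Z,A)$ almost surely in $D^\circ[0,T]$. This means $Z^n\to Z$ uniformly on compacts and $A^n\to A$ in $M_1$.

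The key point is that, although $M^n=Z^n\circ A^n$ need not converge in any Skorokhod topology, Proposition \ref{prop:conv_compositions} gives $M^n_t\to M_t$ almost surely for every $t$ in $\mathbb T:=\{t: \mathbb P(t\in D(A))=0\}\cup\{T\}$. This set is co-countable, hence dense, and contains $T$. Moreover $\sup_n\sup_{t\le T}|M^n_t|<\infty$ pathwise.

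I will prove the martingale property with respect to the filtration generated by $(Z,A)$ evaluated at times $\le t$, or simply the natural filtration of $M$. Fix $s<t$ in $\mathbb T$ and a bounded continuous functional $\Phi$ of finitely many values $M_{s_1},\dots,M_{s_k}$ with $s_i\le s$, $s_i\in\mathbb T$. The target identity is
\[
\mathbb E[(M_t-M_s)\Phi(M_{s_1},\dots,M_{s_k})]=0 .
\]
For each $n$ the analogue holds, because $M^n=Z^n\circ A^n$ is a martingale. Indeed, $Z^n$ is a continuous martingale and $A^n$ is a family of stopping times, so optional sampling applies. If needed I will localise by stopping at the first exit of $|Z^n|$ from $[-R,R]$, which stays compatible with the time change, and then let $R\to\infty$ using the integrability assumption.

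The main obstacle is passing to the limit in $\mathbb E[M^n_t\Phi^n]$ without uniform integrability. The hypothesis only gives $\sup_n\mathbb E[\sup_t|M^n_t|]<\infty$, an $L^1$ bound, which does not by itself give convergence of expectations. My first attempt will exploit martingale structure. Since $M^n$ is a continuous martingale with $\mathbb E[\sup|M^n|]$ bounded, the running maximum controls tails. I will truncate by $\sigma^n_R:=\inf\{u:|M^n_u|\ge R\}$. Then $M^n_{\cdot\wedge\sigma^n_R}$ is a bounded martingale, so $\mathbb E[(M^n_{t\wedge\sigma^n_R}-M^n_{s\wedge\sigma^n_R})\Phi^n]=0$. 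I must then show that $M^n_{t\wedge\sigma^n_R}$ converges to $M_{t\wedge\sigma_R}$, at least for almost every $R$. Here I expect to use the $D^\circ$ framework. Writing $\sigma^n_R$ in clock time as $\rho^n_R:=\inf\{u\ge0:|Z^n_u|\ge R\}$, the stopped process becomes $Z^n_{\cdot\wedge\rho^n_R}\circ A^n$. For all but countably many $R$, $\rho^n_R\to\rho_R$ by uniform convergence of $Z^n$ (at levels where $Z$ genuinely crosses). So $(Z^n_{\cdot\wedge\rho^n_R},A^n)\to(Z_{\cdot\wedge\rho_R},A)$ in $D^\circ[0,T]$, and Proposition \ref{prop:conv_compositions} applies again, now with bounded limits. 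Bounded convergence then gives $\mathbb E[(M^R_t-M^R_s)\Phi]=0$ for the limit stopped process $M^R:=Z_{\cdot\wedge\rho_R}\circ A$, so $M^R$ is a martingale.

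Next I remove the truncation. Fatou's lemma together with the pathwise convergence gives
\[
\mathbb E\Bigl[\sup_{u\le A_T}|Z_u|\Bigr]\le\liminf_n \mathbb E\Bigl[\sup_{u\le A^n_T}|Z^n_u|\Bigr].
\]
The continuous martingale $Z^n_{\cdot\wedge A^n_T}$ has the same range as $M^n$ except possibly over the jump gaps, which are not an issue since $A^n$ is continuous or nearly so. Hence this right-hand side is bounded by $\sup_n\mathbb E[\sup_t|M^n_t|]<\infty$, which is the claimed integrability bound. Finally, $|M^R_t|\le\sup_{u\le A_T}|Z_u|\in L^1$ and $M^R_t\to M_t$ as $R\to\infty$, so dominated convergence gives the martingale identity on $\mathbb T$. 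Right-continuity of $M$ and the same domination then extend it to all $s\le t\le T$.
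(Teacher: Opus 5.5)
Your proposal follows the paper's proof essentially step for step: truncation in clock time at the exit time of $|Z^n|$ from $[-R,R]$ for levels outside a countable exceptional set, convergence of the stopped compositions on a co-countable set of times via Proposition~\ref{prop:conv_compositions}, bounded convergence, Fatou's lemma for the bound on $\mathbb{E}[\sup_{u\le A_T}|Z_u|]$, and finally dominated convergence and right-continuity. The one step you wave through is that $\sup_{u\le A^n_T}|Z^n_u|$ is controlled by $\sup_t|M^n_t|$ because $A^n$ is continuous or has vanishing jumps; the paper also assumes this tacitly (through Corollary~\ref{cor:inf_sup}), it is genuinely needed (take $Z$ a Brownian motion, $\sigma$ its hitting time of $1$, and $A_t=0$ for $t<T/2$, $A_t=\sigma$ for $t\ge T/2$: then $|M|\le 1$ but $M$ is not a martingale), and it holds in the application, where $A^n=f^n(X^n)$ is continuous.
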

\begin{proof}
	By applying Skorokhod's representation theorem, let the convergence hold almost surely. Define
	\[
	\tau^n_c := \inf \{ t\geq 0 : |Z^n_t|\geq c\},\quad 	\tau_c := \inf \{ t\geq 0 : |Z_t|\geq c\},
	\]
	and note that we must have
	$\mathbb{P}( \tau_c< \lim_{\varepsilon\downarrow 0} \tau_{c+\varepsilon})=0$ for all but (at most) countably many $c>0$, since $c\mapsto \tau_c$ is a left-continuous (non-decreasing) process. Fix a sequence $c_k\uparrow \infty$ for which $\mathbb{P}( \tau_{c_k}= \lim_{\varepsilon\downarrow 0} \tau_{c_k+\varepsilon})=1$. Then we have $\tau^n_{c_k} \rightarrow \tau_{c_k}$ almost surely as $n\rightarrow \infty$, for each $k\geq 1$. As in Proposition \ref{prop:conv_compositions}, we get $Z^n_{A^n_t\land \tau^n_{c_k}}\rightarrow
Z_{A_t\land \tau_{c_k}}$ almost surely, as $n\rightarrow \infty$, for a co-countable set of times $t\in \mathbb{T}\subseteq[0,T]$. Since $Z^n$ stopped at $\tau^n_{c_k}$ is bounded, and $A^n_t$ is a stopping time, we have
\begin{equation}\label{eq:X^n_martingale}
\mathbb{E}\Bigl[ \bigl(Z^n_{A^n_t\land \tau^n_{c_k}} - Z^n_{A^n_s\land \tau^n_{c_k}}\bigr)\prod_{i=1}^nf_i(Z^n_{A^n_{s_i}\land \tau^n_{c_k}})\Bigr] = 0.
\end{equation}
for any bounded continuous functions $f_i$ and any $s_i\leq s\leq t$. Let $s_i,s,t \in \mathbb{T}$. Since the $(Z^n_{A^n_t\land \tau^n_{c_k}})_{t\in[0,T]}$ are bounded by $c_k$ uniformly in $n\geq 0$, it follows from \eqref{eq:X^n_martingale} and the convergence above that
\begin{equation}\label{eq:X_martingale}
\mathbb{E}\Bigl[ \bigl(Z_{A_t\land \tau_{c_k}} - Z_{A_s\land \tau_{c_k}}\bigr)\prod_{i=1}^nf_i(Z_{A_{s_i}\land \tau_{c_k}})\Bigr] = 0.
\end{equation}
Now, applying Corollary \ref{cor:inf_sup} and Fatou's lemma, we get
\[
\mathbb{E}[|\sup_{t\in[0,A_T]}Z_t|]\leq \liminf_{n\rightarrow \infty}\mathbb{E}[|\sup_{t\in[0,T]}M^n_t|] <\infty\,,
\]
which proves the last statement of the proposition.
Sending $k\rightarrow \infty$ in \eqref{eq:X^n_martingale}, we can therefore apply dominated convergence to conclude that
\begin{equation}\label{eq:M_martingale}
	\mathbb{E}\Bigl[ (M_t - M_s)\prod_{i=1}^nf_i(M_{s_i})\Bigr] = 0,
\end{equation}
for all bounded continuous $f_i$ and all $s_i \leq s \leq t$ in $\mathbb{T}$. Finally, $|\sup_{t\in[0,T]}M_t|\leq |\sup_{t\in[0,A_T]}Z_t|$, so, by right-continuity of $M$ and dominated convergence, we obtain the martingale property of $M$ from \eqref{eq:M_martingale}. This completes the proof.
\end{proof}

We are now ready to prove Theorem \ref{thm:weak_convergence}.

\begin{proof}[Proof of Theorem \ref{thm:weak_convergence}]
    By Lemma \ref{lem:M1_tightness_time_changed}, the pairs $(W^n\,, X^n)$ are tight on $D^{\circ}[0\,,T]\,$. Let $(W^*\,, X^*)$ denote an arbitrary realisation of the limit along a weakly convergent subsequence $(W^n\,, X^n)$ which we fix throughout and still index by $n\,$. Trivially, $W^*$ is a standard Brownian motion. Define
     $$
     \mathbb I := \{t \in [0\,,T]\,:\, \mathbb P(X_t^* = X_{t-}^*) = 1 \}\,,
     $$
     which is a co-countable subset of $[0\,,T]\,$. For the $M_1$ topology on $D[0\,,T]\,$, the marginal projections $\tau \mapsto (\tau(t_1)\,, \ldots\,, \tau(t_m))$ are continuous at $t_i \in \mathbb I \cup \{T\}$ (see Proposition \ref{prop:conv_compositions} with $f^n = Id$). Using Lemma \ref{lem:M1_tightness_time_changed}, for any $\varepsilon > 0\,$, we can find a sequence $\delta_k \downarrow 0$ with $T - \delta_k \in \mathbb I$ so that 
     $$
     \mathbb P(X^*_T - X^*_{T - \delta_k} > \varepsilon) \leq \liminf_{n \to \infty}\,\mathbb P (X^n_T - X^n_{T - \delta_k} \geq \varepsilon) \leq \eta_k\,,
     $$
     with $\eta_k \downarrow 0$ as $k \to \infty\,$. Consequently, $X^*_{T - \delta_k} \to X^*_T$ in probability and hence $X^*_T = X^*_{T-}$ almost surely, so $T \in \mathbb I\,$. Using this, we can apply Skorokhod's representation theorem on the Polish space $D^{\circ}[0\,,T]$ and invoke Lemma \ref{lemma:limit_deterministic_equation} to see that, with probability 1,
     \begin{equation}
     \label{eq:limit_eq_subseq}
     (1 + \lambda)\,X^*_t = G_0(t) + \nu\,M^*_t\,, \quad t \leq T\,,
     \end{equation}
      where $M^*_t := W^*_{f(X^*_t)}$ for a suitable realisation of the weak limit point $(W^*\,, X^*)$ which we fix from here on. It remains to prove that for each $t \in [0\,,T]\,$, $X^*_t$ is the first time satisfying \eqref{eq:limit_eq_subseq}.
     By the BDG inequality, the uniform quadratic growth assumption on $(f^n)_{n \geq 0}\,$, and Lemma \ref{lem:M1_tightness_time_changed},
     we have $\sup_{n \geq 0}\,\mathbb E \left[ \sup_{t \leq T}\,|W^n_{f^n(X^n_t)}|\right] < + \infty\,$, so we conclude that $M^*$ is a martingale by virtue of Proposition \ref{prop:martingality_limit}. Moreover, from \eqref{eq:limit_eq_subseq} we deduce that $\mathbb E \left[X^*_t \right]$ is finite for all $t \leq T\,$.
     Next, we define
     $$
     U^*_t := \inf\{s \geq 0\,:\,(1 + \lambda)s - \nu\,W^*_{f(s)} > G_0(t)\}\,, \quad t \leq T\,,
     $$
     which is almost surely finite, as $G_0 \geq 0$, $1+\lambda >0$, and $f(0) = 0\,$. By continuity of $G_0$, $U^*$ is c\`adl\`ag. Moreover, using again that $1+\lambda >0$, and that $f$ is continuous and non-decreasing, it follows from standard properties of Brownian motion that we have
     \[
      U^*_t = \inf\{s \geq 0\,:\,(1 + \lambda)s - \nu\,W^*_{f(s)} \geq G_0(t)\}
     \]
     almost surely, at every $t\leq T$. Since \eqref{eq:limit_eq_subseq} holds for all $t\leq T$ with probability 1, we get $U^*_t \leq X^*_t$ almost surely, for each $t \leq T$. By right-continuity of $U^*$, we can thus conclude that $U^*_t \leq X^*_t$  for all $t\leq T$ with probability 1. In particular, we thus have 
     $$
     \mathbb E \left[\sup_{t \leq f(U^*_T)}\,|W^*_t| \right] \leq \mathbb E \left[\sup_{t \leq f(X^*_T)}\,|W^*_t| \right] < + \infty\,,
     $$
     thanks to Proposition \ref{prop:martingality_limit} as above. Since $f(U^*_t)$ is a stopping time with respect to the filtration generated by $W^*$ for any $t \in [0\,,T]\,$, we can apply Doob's optional sampling theorem to obtain $\mathbb E \left[W^*_{f(U^*_t)}\right] = 0\,$.
     Moreover, we of course have, from the definition of $U^*\,$, that 
     $$
     (1 + \lambda)\,U^*_t = G_0(t) + \nu\,W^*_{f(U^*_t)}\,, \quad t \leq T\,,
     $$
     so that finally
     $$
     \mathbb E \left[X^*_t  - U^*_t \right] = \frac{\nu}{1 + \lambda}\,\mathbb E \left[M^*_t - W^*_{f(U^*_t)} \right] = 0\,, \quad t \leq T\,.
     $$
     Since we also have $X^*_t \geq U^*_t$ for all $t\leq T$ with probability 1, we deduce from this that $X^*_t = U^*_t\,$, for all $t \leq T$, with probability 1, by the right-continuity of $X^*$ and $U^*$. The law of $(W^*\,, X^*)$ is thus uniquely identified by $(W^*,U^*)$, regardless of the weakly convergent subsequence, which completes the proof.
\end{proof}

\subsection{Proof of Theorem \ref{thm:bursts}}\label{subsect:rates_proof}
	
	\begin{proof}[Proof of Theorem \ref{thm:bursts}] As in the proof of Theorem \ref{thm:weak_convergence}, we have tightness of $(F^n,X^n)$ on $D^\circ[0,T]$ and the weak limit points are characterised uniquely by the law of $(F^*,X^*)$ for the limit $X^*$ given by Theorem \ref{thm:weak_convergence}. From there, it follows from Corollary \ref{prop:cont_image_jump} and Proposition \ref{prop:pointwise_char_decorated} that we have weak convergence of $\Upsilon^n=F^n \circ X^n$ to $(F^*\circ X^*, I^*)$ on $\mathfrak{D}[0,T]$ with $I_t^*=F^*([X^*(t-),X^*(t)])$ for $t\in[0,T]$. It remains to observe that we have
		\[
		(F^*\circ X^*)_t = \nu W^*_{f(X^*_t)} - \lambda X^*_t = X^*_t - G_0(t)
		\]
		for $t\in[0,T]$, where the second equality follows as in Theorem \ref{thm:weak_convergence}. This confirms that the base path is indeed $F^*\circ X^*=X^*-G_0$, and it likewise confirms that we simply have $I_t^*=\{X^*_t-G_0(t)\}$ for $t\notin D(X^*)$.
	\end{proof}

\subsection{Proof of the results in Section \ref{sect:func_conv_frame}}
\label{subsect:func_conv_frame_proofs}

Here we give the proofs of the statements that comprise our framework for functional convergence in Section \ref{sect:func_conv_frame}.

\begin{proof}[Proof of Proposition \ref{prop:conv_compositions}] Firstly, by the $M_1$-convergence of $\tau^n$, there is a $K>0$ so that $\tau^n(t)\leq K$ for all $t\in[0,T]$ and all $n\geq1$. Thus, the uniform convergence of $f^n$ to $f$ on $[0,K]$ gives that $\sup_{t\in[0,T]} |x^n(t)| $ is bounded in $n\geq 1$, as desired. Now fix any $\varepsilon>0$. Since the $f^n$ are uniformly convergent, there is a $\delta>0$ so that
	\[
	\limsup_{n\rightarrow \infty}  \,\sup \{ |f^n(s)-f(r)|: s,r\geq0,\, |s-r| \leq \eta \} < \epsilon.
	\]
	By \cite[Theorem 12.5.1(v)]{whitt2002stochastic}, for any $t\notin D(\tau)$, we can then find an $\eta>0$ so that
	\[
	\,\sup \{ |\tau^n(s)-\tau(r)|: s,r \in[0,T]\cap[t-\delta,t+\delta]\} < \eta.
	\]
	for all large enough $n \geq 1$. This establishes \eqref{eq:pointwise}. Moreover, $\tau^n(T)\rightarrow \tau(T)$, so we also have $x^n(T)\rightarrow x(T)$.
\end{proof}

\begin{proof}[Proof of Proposition \ref{prop:S-conv}]
	Since the $\tau^n$ are relatively compact for the $M_1$ topology, $\sup_{t\in[0,T]}|\tau^n(t)|\leq K$ for all $n\geq 1$,
	for some $K>0$. Fix any $\eta>0$ and let $N_\eta^K(f^n)$ be the number of $\eta$-oscillations of $f^n$ on $[0,K]$, i.e., $N_\eta^K(f^n)\geq k$ if and only if there exist $t_1 \leq t_2 \leq \cdots \leq t_{2k}$ in $[0,K]$ so that $|f^n(t_{2i-1})-f^n(t_{2i})|>\eta$ for $i=1,2,...,k$. By the $C$-relative compactness of the $f^n$, there is a small enough $\delta = \delta(\eta)$ such that
	\[
	\sup\{|f^n(t)-f^n(s)| :s,t\in[0,K],\, |s-t| \leq  \delta \} < \eta
	\]
	for all $n\geq 1$. Since $x^n=f^n\circ \tau^n$, it follows that $N_\eta^K(x^n)\geq k$ requires $|\tau^n(t_{2i-1})-\tau^n(t_{2i-1})| > \delta$ for $t_1 \leq t_2 \leq \cdots \leq t_{2k}$ in $[0,T]$. Thus,
	\[
	N_\eta^K(x^n) \leq N_\delta^T(\tau^n)
	\]
	for all $n\geq 1$. However, the $M_1$ relative compactness of the $\tau^n$ also implies that the latter quantity is bounded in $n\geq 1$ (see \cite[Corollary A.9]{sojmark2023weak}). Recall also from Proposition \ref{prop:conv_compositions} that $\sup_{t\in[0,T]}|x^n(s)|$ is bounded. Consequently, the conclusion follows from \cite[Theorem 2.13(iv)]{jakubowski1997non} and \cite[Proposition 2.14]{jakubowski1997non}.
\end{proof}

\begin{proof}[Proof of Proposition \ref{prop:cont_image_jump}]
	Fix $t\in D(\tau)$. Write $a_{n,\delta}=\tau^n(t-\delta\lor 0)$,
	$b_{n,\delta}=\tau^n(t+\delta \land T)$, $a= \tau(t-)$ and $b=\tau(t)$.
	Since $f^n$ and $f$ are continuous,
	$f^n([a_{n,\delta},b_{n,\delta}])=[m_n,M_n]$ and
	$f([\tau(t-),\tau(t)])=[m,M]$ for suitable constants.
	Since $\tau^n$ is non-decreasing,
	$\tau^n(\overline{B(t,\delta)}) \subseteq
	[a_{n,\delta}, b_{n,\delta}]$ and every point in
	$[a_{n,\delta}, b_{n,\delta}]$ is within a distance
	$\sup_{s\leq T}\Delta\tau^n(s)$ of
	$\tau^n(\overline{B(t,\delta)})$, so
	\[
	d_{\mathrm{H}}\bigl(x^n(B(t,\delta)),\,
	[m_n,M_n]\bigr)
	\leq \sup_{|x-y| \leq \sup_{s\leq T}\Delta\tau^n(s)}
	\!\!|f^n(x)-f^n(y)|
	\]
	where the supremum is restricted to $x,y \in [0,K]$ since $\sup_n \tau^n(T)\leq K$, for some $K>0$, and hence the right-hand side tends to zero. By the triangle inequality, it therefore suffices to show
	\begin{equation}\label{eq:hausdorff_interval}
		d_\mathrm{H}\bigl([m_n,M_n],\,
		f([\tau(t-),\tau(t)])\bigr)
		= |m_n -m|\lor |M_n-M|
		\rightarrow 0.
	\end{equation}
	Note that
	$M_n = \max f^n([a_{n,\delta},b_{n,\delta}])$
	and $M=\max f([a,b])$.
	For any given $\eta>0$, by taking $\delta>0$ small enough,
	we can find $l,l^\prime,r,r^\prime\notin D(\tau)$ so that
	$l\leq (t-\delta) \lor 0 \leq l^\prime < t$ and
	$t< r \leq (t+\delta) \land T \leq r^\prime$ with
	\[
	\tau(t-)-\eta \leq \tau(l)\leq \tau(l^\prime) \leq
	\tau(t-),\quad \tau(t) \leq \tau(r)\leq \tau(r^\prime)
	\leq \tau(t) + \eta.
	\]
	Since $\tau^n(l)\leq a_{n,\delta}\leq\tau^n(l^\prime)$
	and $\tau^n(r)\leq b_{n,\delta}\leq\tau^n(r^\prime)$
	with convergence as $n\rightarrow\infty$ for
	$l,l^\prime,r,r^\prime \notin D(\tau)$, it follows that:
	given $\eta>0$, we can take $\delta>0$ small enough so that
	\begin{equation}\label{eq:intervals_close}
		a-2\eta \leq a_{n,\delta}\leq a+\eta,\quad
		b-\eta \leq b_{n,\delta} \leq b+2\eta.
	\end{equation}
	By the triangle inequality, we have
	\[
	|M_n - M|
	\leq \sup_{s\in[0,K]}|f(s)-f^n(s)| + 
	|\max f([a_{n,\delta}, b_{n,\delta}])
	- \max f([a, b])|,
	\]
	and \eqref{eq:intervals_close} gives
	$d_{\mathrm{H}}([a_{n,\delta}, b_{n,\delta}],
	[a, b]) \leq 2\eta$ for all $n$ large enough,
	so the second term on the right-hand side is bounded by the maximum of $|f(x)-f(y)|$ over $|x-y| \leq 2\eta$ with $x,y\in[0,K]$, for all $n$ large enough.
	Hence, by taking $\delta>0$ small enough, we can make
	$\limsup_n |M_n - M| $ as small as we like. The same arguments apply to $|m_n - m|$, so we obtain
	\eqref{eq:hausdorff_interval} and hence the
	proof is complete.
\end{proof}

The proof of Corollary \ref{cor:sum_hausdorff_range} follows in exactly the same way as Proposition \ref{prop:cont_image_jump}.

\begin{proof}[Proof of Corollary \ref{cor:inf_sup}]
	We only consider the infimum. Since $\tau^n(0) \to \tau(0)$ and
	$\phi^n \to \phi$ uniformly on compacts (with $\phi$ continuous), we deduce that $\underline{\phi}^n\to \underline{\phi}$ uniformly on compacts, and hence
	$(\underline{\phi}^n, \tau^n) \to
	(\underline{\phi}, \tau)$ in
	$D^\circ[0,T]$. Next, we can observe that
	\[
	0 \leq \inf_{s\leq t}x^n_s
	- 
	\underline{\phi}^n \circ \tau^n
	\leq \sup_{|x-y| \leq \sup_{s\leq t}\Delta\tau^n(s)}
	\!\!|\phi^n(x)-\phi^n(y)|,
	\]
	where the supremum is restricted to $x,y\leq \sup_n\tau^n(T)<\infty$, so we obtain the uniformly vanishing error $\varepsilon^n(t)$ from the equicontinuity of the $\phi^n$. Finally, since $t\mapsto \inf_{s\leq t} x_s^n$ is non-decreasing, Proposition \ref{prop:conv_compositions} applied to 	$(\underline{\phi}^n, \tau^n) \to
	(\underline{\phi}, \tau)$ gives $M_1$ convergence.
\end{proof}

\begin{proof}[Proof of Corollary \ref{cor:skorokhod}]
	As in the proof of Corollary~\ref{cor:inf_sup}, also
	$\hat{\phi}^n \rightarrow \hat{\phi}$ uniformly on compacts, so the first two claims follow by arguing in the same way. For the final claim, the $M_1$ convergence of $l^n$ follows by Corollary~\ref{cor:inf_sup}, while the second part follows from Corollary \ref{cor:sum_hausdorff_range} and Remark \ref{rem:nearly_cont_phi-n} applied to $(\hat{\phi}^n,\tau^n)$ and $(\varepsilon^n,\mathrm{Id})$.
\end{proof}

Before the proof of Proposition \ref{prop:pointwise_char_decorated}, we make the following two basic observations about any given elements $x \in D[0,T]$ and $(x,I)\in \mathfrak{D}[0,T]$.

Firstly, for any $t\in[0,T]$ and $\varepsilon>0$, there exists $\delta$ such that
\begin{equation}\label{eq:1st_obs}
\mathrm{dist}(y,I_t) \leq \varepsilon \quad \text{for every}\quad y\in I_{t^\prime},\quad \text{for all}\quad t^\prime \in B(t,\delta).
\end{equation}
Indeed, Definition \ref{def:decorated_Skorokhod} gives $x(t)\in I_t$ and $d_\mathrm{H}(I_{s},\{x(t)\})\rightarrow 0$ as $s\downarrow t$, so, for $\delta >0$ small enough, $t^\prime\in (t,t+\delta)$ implies $\mathrm{dist}(y,I_t)\leq |y-x(t)|\leq \varepsilon $ for every $y\in I_{t^\prime}$, and the reasoning applies for $t^\prime\in (t-\delta ,t)$ by replacing $x(t)$ with $x(t-)$.

Secondly, for any $(s,z)\in \Gamma(\iota(x))$ and $\theta>0$, there exists $r\in[0,T]$ so that 
\begin{equation}\label{eq:2nd_obs}
	|r-s|\leq \theta \quad \text{and}\quad  |x(r)-z|\leq \theta. 
\end{equation}
Indeed, if $z=x(s)$, then $r=s$ works, and if $z=x(s-)$, then it follows from $x(r)\rightarrow x(s-)$ as $r\uparrow s$.
\begin{proof}[Proof of Proposition \ref{prop:pointwise_char_decorated}]
	Assume $d_\mathfrak{D}(x_n,(x,I))\rightarrow 0$, and fix $t\in[0,T]$ and $\varepsilon>0$. Let $\delta^\prime>0$ be as in \eqref{eq:1st_obs} given $\varepsilon^\prime>0$, where $\varepsilon^\prime=\varepsilon/2$, and set $\delta = \delta^\prime/2$. We can take $N$ so that (i) $\mathrm{dist}((s,z),\Gamma(I))\leq \delta \land \varepsilon^\prime$ for all $(s,z)\in \Gamma(\iota(x^n))$ and (ii) $\mathrm{dist}((s,y),\Gamma(\iota(x^n)))\leq \delta \land \varepsilon^\prime$ for all $(s,z)\in \Gamma (I)$.
	
	First, for any $s\in B(t,\delta)$, (i) implies that there exists $(s^\prime ,z^\prime)\in \Gamma (I)$ with $|s-s^\prime|,|x^n(s)-z^\prime|\leq  \delta \land \varepsilon^\prime $. Thus, the triangle inequality gives $|s^\prime - t| \leq \delta^\prime $, so \eqref{eq:1st_obs} implies $\mathrm{dist}(z^\prime , I_t)\leq \varepsilon^\prime$, and hence another triangle inequality gives $\mathrm{dist}(x^n(s),I_t) \leq \varepsilon$ for every $s\in B(t,\delta)$, for all $n>N$. Next, for any $z\in I_t$ and $n>N$, (ii) implies that there exist $(s_n,z_n)\in \Gamma(\iota(x^n))$ such that $|s_n-t|,|z_n-z|\leq \delta \land \varepsilon^\prime$. By \eqref{eq:2nd_obs} with $\theta =\delta \land \varepsilon^\prime$, we find $r_n\in[0,T]$ such that $|r_n-s_n|,|x^n(r_n)-z_n| \leq \delta \land \varepsilon^\prime$, so the triangle inequality gives $r_n \in B(t,\delta)$ with $|x^n(r^n)-z|\leq \varepsilon$, hence $\mathrm{dist}(z,x^n(B(t,\delta)))\leq \varepsilon$ for every $z\in I_t$, for all $n>N$. This verifies \eqref{eq:decorated_char}.
	
	Now assume instead that \eqref{eq:decorated_char} holds. We proceed by contradiction, so suppose that (i) $\sup_{(s,z)\in \Gamma (\iota (x^n))}\mathrm{dist}((s,z),\Gamma(I))\not \rightarrow 0$ or (ii) $\sup_{(t,y)\in \Gamma(I)}\mathrm{dist}((t,y),\Gamma(\iota(x^n)))\not \rightarrow 0$.
	
	In case (i), we can find $\varepsilon>0$, $s_n\rightarrow s_*$ in $[0,T]$, and $z_n \in \{x^n(s_n-),x^n(s_n)\}$ such that $\mathrm{dist}((s_n,z_n),\Gamma(I)) \geq \varepsilon$. For small $\delta \in (0,\varepsilon)$, \eqref{eq:decorated_char} at $t=s_*$ gives an $N>0$ so that $|s_n-s_*|\leq \delta /2 $ and $d_\mathrm{H}(x^n(B(s_*,\delta)),I_{s_*})\leq \varepsilon/4$ for all $n>N$. 
	By \eqref{eq:2nd_obs}, there exist $r_n$ with $|r_n-s_n|,|x^n(r_n)-z_n|\leq \delta/4$, so $r_n \in B(s_*,\delta)$ by the triangle inequality and hence $|x^n(r_n)-z_*|\leq \varepsilon /4$ for some $z_*\in I_{s_*}$. Thus, $(s_*,z_*)\in \Gamma(I)$ satisfies $|s_n-s_*|\leq \delta /2 \leq \varepsilon /2$ and $|z_n-z_*|\leq \varepsilon/2$ by the triangle inequality, contradicting $\mathrm{dist}((s_n,z_n),\Gamma(I)) \geq \varepsilon$.
	
	In case (ii), the compactness of $\Gamma(I)$ implies that we can find $\varepsilon>0$ and points $(t_n,y_n)\rightarrow (t_*,y_*)$ in $\Gamma (I)$ so that $\mathrm{dist}((t_n,z_n),\Gamma(\iota(x^n)))\geq \varepsilon$. Taking $\delta \in (0,\varepsilon)$ small enough,  \eqref{eq:decorated_char} gives an $N>0$ so that $\mathrm{dist}(y,x^n(B(t_*,\delta/4))) \leq \varepsilon/4$ for all $y\in I_{t_*}$ when $n>N$. Since $y_*\in I_{t_*}$, we can thus find $s_n\in B(t_*,\delta/4)$ with $|y_*-x^n(s_n)|\leq \varepsilon/4$ for $n>N$. With $N$ large enough, we also have $|t_n-t_*|,|y_n-y_*|\leq \delta /4$, so the triangle gives $|t_n-s_n|\leq \delta/2 \leq \varepsilon/2$ and $|y_n-x_n(s_n)|\leq \varepsilon/2$ which is again a contradiction.
\end{proof}

\appendix
\section{Dambis--Dubins--Schwarz theorem}
\label{app:DDS}
The following proposition is a version of the Dambis--Dubins--Schwarz theorem with enlargement of the probability space, which is specifically adapted to our setup.
\begin{prop}
    \label{prop:DDS}
    Let $\left( \Omega\,, \mathcal F\,, (\mathcal F_t)_{t \leq T}\,, \mathbb P\right)$ be a filtered probability space supporting two adapted processes $(X_t\,, M_t)_{t \leq T}$ such that $M$ is a continuous martingale with quadratic variation $\langle M\rangle_t =: A_t\,$. Then, there exists an extended probability space $\left(\widetilde \Omega\,, \mathcal G\,, (\mathcal G_t)_{t \geq 0}\,, \widetilde{\mathbb P}\right)$ supporting processes $\widetilde X\,$, $\widetilde M\,$, $\widetilde A\,$, and a $\left(\mathcal G_t\right)_{t \geq 0}$-Brownian motion $W\,$, such that 
    \begin{itemize}
        \item $\left(\widetilde X\,, \widetilde M\,, \widetilde A\right) \sim (X\,, M\,, A)\,$.
        \item For any $t \leq T\,$, $\widetilde{M}_t = W_{\widetilde{A}_t}\,$.
        \item For any $t \leq T\,$, $\widetilde{A}_t$ is a $(\mathcal G_s)_{s \geq 0}$-stopping time.
    \end{itemize}
\end{prop}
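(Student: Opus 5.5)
The plan is to run the classical Dambis--Dubins--Schwarz construction on a product enlargement, using an independent Brownian motion to extend the time-changed martingale past $A_\infty:=A_T$. Concretely, let $(\Omega',\mathcal F',\mathbb P')$ carry a standard Brownian motion $\beta$ independent of everything. Set $\widetilde\Omega:=\Omega\times\Omega'$ and $\widetilde{\mathbb P}:=\mathbb P\otimes\mathbb P'$. Lift $X,M,A$ canonically, so that $(\widetilde X,\widetilde M,\widetilde A)\sim(X,M,A)$ holds trivially. Extend the processes constantly after $T$. Define the right-continuous inverse
\[
C_s:=\inf\{t\in[0,T]: A_t>s\}\quad(\inf\emptyset:=T),
\]
and put
\[
W_s:=M_{C_s}\ \text{for } s< A_T,\qquad W_s:=M_T+\beta_{s-A_T}-\beta_0\ \text{for } s\ge A_T.
\]
Let $\mathcal G_s$ be the usual augmentation of $\mathcal F_{C_s}\otimes\sigma(\beta_{(u-A_T)^+}\,:\,u\le s)$. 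Some care is needed in defining this filtration: one takes the smallest filtration containing $\mathcal F_{C_s}$ and making $W$ adapted, which is the standard choice in the enlarged DDS theorem (Revuz--Yor, Thm.~V.1.7).

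The proof then has three steps. First, show that $W$ is continuous and satisfies $\widetilde M_t=W_{\widetilde A_t}$ for all $t\le T$. This uses the standard fact that $M$ is a.s. constant on every interval where $A$ is constant. That fact gives $M_{C_{A_t}}=M_t$, and it also gives continuity of $s\mapsto M_{C_s}$ across the jumps of $C$. At $s=A_T$, continuity holds because $M_{C_{s}}\to M_T$ as $s\uparrow A_T$.

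Second, show that each $\widetilde A_t$ is a $(\mathcal G_s)$-stopping time. Since $C$ is right-continuous with $\{A_t\le s\}=\{C_s\ge t\}$ (up to the usual right-continuity adjustment), we get $\{A_t\le s\}\in\mathcal F_{C_s}\subseteq\mathcal G_s$. The independent part does not interfere, because it only enters after time $A_T\ge A_t$.

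Third, show that $W$ is a $(\mathcal G_s)$-Brownian motion. The plan is to use Lévy's characterisation: check that $W$ and $W_s^2-s$ are continuous $(\mathcal G_s)$-local martingales. On $[0,A_T)$, this follows from optional stopping for $M$ at the stopping times $C_s$, using $\langle M\rangle_{C_s}=s\wedge A_T$, with localisation by $\inf\{t:|M_t|\ge c\}$ to handle integrability. After $A_T$, the increments come from $\beta$. Since $\beta$ is independent of $\mathcal F_T$ and $A_T$ is $\mathcal F_T$-measurable, the strong-Markov/independence argument gives the martingale property of the pasted process.

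I expect the main obstacle to be this gluing step at the random time $A_T$. There one has to verify carefully that the conditional increments of $W$ over $[s,s']$ straddling $A_T$ have mean zero and conditional variance $s'-s$ given $\mathcal G_s$. I would handle it by splitting
\[
W_{s'}-W_s=(W_{s'\wedge A_T}-W_{s\wedge A_T})+(W_{s'\vee A_T}-W_{s\vee A_T}),
\]
treating the first term with the stopped martingale $M_{C_\cdot}$ and the second by conditioning on $\mathcal F_T\vee\mathcal G_s$ and using the independence of $\beta$.
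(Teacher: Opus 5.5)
Your proof is correct. It has the same skeleton as the paper's: a product enlargement by an independent Brownian motion, the right-continuous inverse of $A$, and constancy of $M$ on flat stretches of $A$. The difference is in the gluing, and you paste in the shifted process $\beta_{s-A_T}$.

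\textbf{Paper's route.} The paper pastes in the unshifted increment, $W_t=\widetilde M_{\widetilde\tau_t}+\widetilde B_t-\widetilde B_{t\wedge\widetilde A_T}$, with the product filtration $\mathcal G_t=\mathcal F_{\tau_t}\otimes\mathcal F'_t$. This is the construction in the \emph{proof} of Revuz--Yor V.1.7; the shifted form in their statement is recovered afterwards by the strong Markov property. With this choice:
\begin{itemize}
\item $\widetilde B$ is a $(\mathcal G_t)$-Brownian motion independent of the first factor, and $\widetilde A_T$ is a $(\mathcal G_t)$-stopping time;
\item hence $\widetilde B_t-\widetilde B_{t\wedge\widetilde A_T}=\int_0^t\mathbf 1_{\{r>\widetilde A_T\}}\,d\widetilde B_r$;
\item L\'evy's theorem then applies immediately to a sum of two orthogonal local martingales with brackets $t\wedge\widetilde A_T$ and $(t-\widetilde A_T)^+$.
\end{itemize}
So the gluing obstacle you anticipate never arises in the paper.

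\textbf{Your route.} The shifted process forces a non-product filtration. Your $\otimes$ should be $\vee$, since $\beta_{(u-A_T)^+}$ is not a function on $\Omega'$ alone. It is also not the Revuz--Yor filtration.

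The splitting does go through. However, your treatment of the first term silently uses $\mathbb E[\xi\mid\mathcal G_s]=\mathbb E[\xi\mid\mathcal F_{C_s}]$ for $\mathcal F_T$-measurable $\xi$. This is not automatic, because $\mathcal G_s$ carries information about $A_T$ through $\beta$. It holds for the following reasons:
\begin{itemize}
\item By your own identity with $t=T$, $\{A_T\le s\}=\{C_s\ge T\}\in\mathcal F_{C_s}$, so $A_T$ is an $(\mathcal F_{C_s})$-stopping time.
\item For $u\le s$ one has $(u-A_T)^+=(u-A_T\wedge s)^+$, and $A_T\wedge s$ is $\mathcal F_{C_s}$-measurable.
\item Hence the generators of $\mathcal G_s$ lie in $\mathcal F_{C_s}\vee\sigma(\beta)$, and $\beta$ is independent of $\mathcal F_T$.
\end{itemize}
You should state this explicitly. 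No localisation is needed there either, since $\langle M\rangle_{C_s}\le s$ makes $M^{C_s}$ an $L^2$-martingale.

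Your stopping-time step, via the exact identity $\{\widetilde A_t\le s\}=\{C_s\ge t\}\in\mathcal F_{C_s}$, is a slightly more direct version of the paper's check that $\{\widetilde A_t\ge s\}\in\mathcal F_{\tau_s}\otimes\mathcal F'_s$.
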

\begin{proof}
    We first define the stopping times 
    $$
    \tau_t := \inf\{s \geq 0\,:\, A_s > t\}\,, \quad t \geq 0\,.
    $$
    Let $(\Omega'\,, \mathcal F'\,, (\mathcal F'_t)_{t \geq 0}\,, \mathbb P')$ be a filtered probability supporting a Brownian motion $B\,$. We define 
    $$
    \widetilde \Omega := \Omega \times \Omega'\,, \quad \mathcal G := \mathcal F \otimes \mathcal F'\,,\quad \mathcal G_t := \mathcal F_{\tau_t} \otimes \mathcal F_t'\,,\quad \widetilde{\mathbb P} := \mathbb P \otimes \mathbb P'\,,
    $$
    as well as 
    $$
    \left(\widetilde X\,, \widetilde M\,, \widetilde A\,, \widetilde \tau \right)(\omega\,, \omega') := \left(X\,, M\,, A\,, \tau \right)(\omega)\,, \quad \widetilde B(\omega\,, \omega') := B(\omega')\,, \quad (\omega\,, \omega') \in \widetilde \Omega\,,
    $$
    so that the first point of the proposition is verified, and $\widetilde{B}$ is independent of $(\widetilde X\,, \widetilde{M}\,, \widetilde{A})\,$.

    We consider the $(\mathcal G_t)_{t\geq 0}$-adapted process
    $$
    W_t := \widetilde{M}_{\widetilde{\tau}_t} + \widetilde{B}_t - \widetilde{B}_{t \wedge\widetilde{A}_T}\,, \quad t \geq 0\,.
    $$
    Following the proof of \cite[Theorem~V.1.7]{revuzYor1999}, we obtain that $W$ is a $(\mathcal G_t)_{t \geq 0}$-Brownian motion. Moreover, $\widetilde{M}_{\widetilde{\tau}_{{\widetilde A}_t}} = \widetilde M_t$ for all $t \leq T\,$, since $\widetilde M$ and $\widetilde A$ have the same intervals of constancy, so that $\widetilde M_t = W_{\widetilde A_t}\,$. It remains to prove that for $t\leq T\,$, $\widetilde A_t$ is a $(\mathcal G_s)_{s \geq 0}$-stopping time. For $s\,, u \geq 0\,$,
    $$
    \{A_t \geq s\} \cap\{\tau_s \leq u\} = \{A_t \geq s\} \cap \{A_u \geq s\} = \{ A_{t \wedge u} \geq s\} \in \mathcal F_u\,.
    $$
    This being true for all $u \geq 0\,$, we conclude that $\{A_t \geq s\} \in \mathcal F_{\tau_s}$ for all $s \geq 0\,$. Hence, 
    $$
    \{\widetilde A_t \geq s\} = \{A_t \geq s\} \times \Omega' \in \mathcal F_{\tau_s} \times \mathcal F_s' = \mathcal G_s\,, \quad s \geq 0\,,
    $$
    which concludes the proof.
\end{proof}

\section{Convolution kernels and resolvents}
\label{app:resolvents}
We gather here some results on locally integrable convolution kernels and their resolvents. In the following, $f * g$ denotes the convolution product 
$$
(f * g)(t) := \int_0^t \, f(t-s)\,g(s)\, ds\,, \quad t \geq 0\,.
$$
For any $K \in L^1_{loc}(\R_+\,, \R)\,$, there exists a unique $R \in L^1_{loc}(\R_+\,, \R)\,$, called its \textit{resolvent of the second kind}, satisfying 
$$
K * R = R * K = K - R\,,
$$
see \cite[Theorem~2.3.1]{gripenberg1990volterra}. Classical examples include:
\begin{itemize}
    \item The exponential kernel $K(t) = c e^{bt}\,$, $c\,, b \in \R\,$, for which $R(t) = ce^{(b-c)t}\,$.
    \item The fractional kernel $K(t) = \frac{c}{\Gamma(\alpha)}t^{\alpha - 1}\,$, $c \in \R\,$, $\alpha > 0\,$, with $R(t) = c t^{\alpha - 1}E_{\alpha, \alpha}(-ct^{\alpha})\,$.
    \item The gamma kernel $K(t) = \frac{c}{\Gamma(\alpha)}e^{bt} t^{\alpha -1}\,$, $c\,, b \in \R\,$, $\alpha > 0\,$, with $R(t) = c e^{bt}t^{\alpha-1}E_{\alpha, \alpha}(-ct^{\alpha})\,$.
\end{itemize}
Here, $E_{\alpha, \beta}$ denotes the two-parameter Mittag-Leffler function.\\

In the case of a completely monotone kernel $K$ (see Definition \ref{defn:cm}), we have more information on its resolvent, such as its sign.

\begin{prop}
\label{prop:nonnegative_resolvent_cm}
    Let $K \in L^1_{loc}(\R_+\,, \R_+)$ be a completely monotone kernel. Then its resolvent $R$ is nonnegative, and 
    $$
    \int_0^{+ \infty}\,R(s)\,ds \leq 1\,.
    $$
\end{prop}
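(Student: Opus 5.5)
The plan is to use the classical theory of completely monotone kernels, via Bernstein's theorem and the Laplace transform. By Bernstein's theorem, write $K(t)=\int_{[0,\infty)} e^{-xt}\,\mu(dx)$ for a nonnegative measure $\mu$. Local integrability of $K$ is equivalent to $\int (1\wedge x^{-1})\,\mu(dx)<\infty$. Then $K$ is non-increasing, and $\log K$ is convex on $(0,\infty)$ by Cauchy--Schwarz applied to the Bernstein representation; that is, $K$ is log-convex. The key known fact I would invoke is \cite[Theorem~5.3.1]{gripenberg1990volterra} (Friedman's/Gripenberg's result): if $K\in L^1_{loc}$ is nonnegative, non-increasing and log-convex on $(0,\infty)$, then its resolvent of the second kind $R$ is nonnegative and non-increasing. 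If I had to prove nonnegativity by hand, I would first treat kernels that are finite sums of exponentials $\sum c_i e^{-x_i t}$. For these, $R$ solves a linear ODE system whose Laplace transform $\hat R=\hat K/(1+\hat K)$ is a Stieltjes function, because $\hat K$ is one and Stieltjes functions are stable under $g\mapsto g/(1+g)$. Hence $R$ is itself completely monotone, and in particular nonnegative. I would then approximate a general $\mu$ weakly by discrete measures, using the $L^1_{loc}$-continuity of $K\mapsto R$ \cite[Theorem~2.3.1]{gripenberg1990volterra} to pass nonnegativity to the limit.

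For the integral bound, I would use $R\ge0$ together with the resolvent identity $R=K-K*R$. Since $K,R\ge0$, this gives $R\le K$, so $R\in L^1_{loc}$ and the Laplace transforms are defined for $z>0$. Taking Laplace transforms of $K*R=K-R$ yields $\hat R(z)=\hat K(z)/(1+\hat K(z))\le 1$ for all $z>0$, with $\hat K(z)\in[0,\infty]$. Then I let $z\downarrow0$. Because $R\ge0$, monotone convergence gives $\int_0^\infty R(s)\,ds=\lim_{z\downarrow0}\hat R(z)\le1$; more precisely, the limit is $\|K\|_{L^1}/(1+\|K\|_{L^1})$, which equals $1$ when $K\notin L^1$.

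The main obstacle is the nonnegativity of $R$; the integral bound is immediate once positivity is known. If quoting \cite[Theorem~5.3.1]{gripenberg1990volterra} is acceptable, that settles it. Otherwise the delicate points in the approximation argument are two. First, the stability of Stieltjes functions under $g/(1+g)$ must be checked, which I would do via the characterization that $g$ is Stieltjes if and only if $1/g$ is a complete Bernstein function. Second, one needs $L^1_{loc}$ convergence of the approximating kernels. I would obtain this by truncating $\mu$ to $[0,N]$, discretizing, and controlling the tail $\int_{x>N} e^{-xt}\mu(dx)$ in $L^1(0,T)$ by $\int_{x>N}x^{-1}\mu(dx)\to0$.
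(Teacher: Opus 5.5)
Your proof is correct. For nonnegativity you follow the paper: you get log-convexity of $K$ from Bernstein's representation and Cauchy--Schwarz, then invoke the theorem in \cite{gripenberg1990volterra} for nonnegative, non-increasing, log-convex kernels. Your fallback Stieltjes-function route is an independent proof that even gives $R$ completely monotone. Its discretisation step is dispensable, since $\hat K$ is already a Stieltjes function for every completely monotone $K\in L^1_{loc}$.

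The integral bound is where you genuinely differ from the paper. The paper stays in the time domain. It writes $\bar R=\bar K-K*\bar R$ for $\bar R=\int_0^\cdot R$ and shows that once $\bar R$ reaches $1$ at some $t_0$, it cannot exceed $1$ afterwards. This uses only that $K$ is non-increasing and that $\bar R$ is non-decreasing, so $\bar R\le 1$ before $t_0$ and $\bar R\ge 1$ after. You instead use $0\le R\le K$, the Laplace identity $\hat R=\hat K/(1+\hat K)$, and monotone convergence as $z\downarrow 0$.

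One point to fix: that algebra needs $\hat K(z)<\infty$ for $z>0$, not merely $\hat K(z)\in[0,\infty]$. This holds because $K$ is non-increasing and locally integrable, so $\hat K(z)\le\int_0^1 K+K(1)/z$. The paper uses the same bound in the proof of Lemma~\ref{lemma:weak_cv_resolvent_mean_rev}.

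The paper's argument is elementary and transform-free. Yours is shorter and gives the exact value $\int_0^\infty R=\|K\|_{L^1}/(1+\|K\|_{L^1})$, so in particular $\int_0^\infty R=1$ when $K\notin L^1(\mathbb{R}_+)$. That equality is what the proof of Theorem~\ref{thm:large_time_limit_non_L1} actually relies on. That proof applies Theorem~\ref{thm:large_time_limit_L1}, which is normalised to unit kernel mass, to a kernel built from the resolvent of $\lambda K$, whereas the proposition as stated only provides the inequality $\le 1$.
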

\begin{proof}
    Since $K$ is completely monotone, it is log-convex. To see this, one has to write $K$ as the Laplace transform of a nonnegative measure on $[0\,, \infty)$ thanks to the Bernstein theorem, and log-convexity follows from the Cauchy-Schwartz inequality. Hence, \cite[Theorem~9.8.1]{gripenberg1990volterra} proves that $R$ is nonnegative. Let us introduce the continuous non-decreasing functions $\bar R(t) := \int_0^t \,R(s)\,ds$ and $\bar K(t) := \int_0^t\,K(s)\,ds\,$. By definition, they satisfy
    $$
    \bar R(t) = \bar K(t) - (K * \bar R)(t)\,, \quad t \geq 0\,.
    $$
    Assume that there exists $t_0 > 0$ such that $\bar R(t_0) = 1\,$, then for any $t \geq t_0\,$,
    \begin{align*}
        \bar R(t) &= \bar K(t)- \int_0^{t_0}\,K(t_0-s)\,\bar R(s)\,ds + \int_0^{t_0}\,(K(t_0 -s) - K(t-s))\,\bar R(s)\,ds \\
        &\quad - \int_{t_0}^t\,K(t-s)\,\bar R(s)\,ds \\
        &= \bar K(t) + 1 - \bar K(t_0) + \int_0^{t_0}\,(K(t_0 -s) - K(t-s))\,\bar R(s)\,ds \\
        &\quad - \int_{t_0}^t\,K(t-s)\,\bar R(s)\,ds\,.
    \end{align*}
    since $K$ is non-increasing, and $\bar R$ is non-decreasing, we obtain 
    \begin{align*}
        \bar R(t) &\leq \bar K(t) + 1 - \bar K(t_0) + \int_0^{t_0} \,(K(t_0-s) - K(t-s))\,ds  - \int_{t_0}^t\,K(t-s)\,ds \\
        &= 1\,.
    \end{align*}
    Hence, we deduce that $\bar R(t) \leq 1$ for all $t \geq 0\,$, which concludes the proof.
\end{proof}

More generally, for a non-increasing kernel $K\,$, the resolvent of $-K$ is easier to deal with.
\begin{prop}
    \label{prop:nonpositive_resolvent}
    Let $K \in L^1_{loc}(\R_+\,, \R_+)$ be a non-increasing kernel. Then the resolvent of the second kind of $-K$ is nonpositive.
\end{prop}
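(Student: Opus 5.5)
The plan is to reduce the claim to a renewal equation with a nonnegative kernel and then solve that equation by a Neumann series. By definition, the resolvent of the second kind $R$ of $-K$ satisfies $(-K)*R = -K - R$, that is,
\[
R = -K + K*R .
\]
Setting $Q := -R$ and multiplying by $-1$ gives
\[
Q = K + K*Q .
\]
So the statement is equivalent to $Q \geq 0$ a.e. Note the sign carefully: the convolution term enters with a \emph{plus} sign, which is what makes positivity propagate.

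Next I construct the nonnegative solution explicitly. Fix $T>0$ and define the iterated convolutions $K^{*1} := K$ and $K^{*(n+1)} := K * K^{*n}$. Each $K^{*n}$ is nonnegative, because $K \geq 0$ and convolution of nonnegative functions is nonnegative. To control the series $\sum_{n\geq1} K^{*n}$ in $L^1(0,T)$, I use exponential weights. For $\sigma>0$ write $K_\sigma(t) := e^{-\sigma t}K(t)$. Then $(K^{*n})_\sigma = (K_\sigma)^{*n}$, and Young's inequality on $[0,T]$ gives
\[
\|(K_\sigma)^{*n}\|_{L^1(0,T)} \leq \|K_\sigma\|_{L^1(0,T)}^n .
\]
Since $K\in L^1_{loc}$, dominated convergence gives $\|K_\sigma\|_{L^1(0,T)} \to 0$ as $\sigma\to\infty$. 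Choosing $\sigma$ with $\|K_\sigma\|_{L^1(0,T)}<1$, the weighted series converges in $L^1(0,T)$. Multiplying back by $e^{\sigma t}\leq e^{\sigma T}$, the series $\widetilde Q := \sum_{n\geq1}K^{*n}$ converges in $L^1(0,T)$, and its limit is nonnegative a.e.

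It remains to identify $\widetilde Q$ with $Q$. Because convolution with $K$ is continuous on $L^1(0,T)$, convolving the series with $K$ term by term gives
\[
K*\widetilde Q = \sum_{n\geq2}K^{*n} = \widetilde Q - K ,
\]
so $\widetilde Q$ solves $Q = K + K*Q$ on $[0,T]$. The solution of this linear Volterra equation is unique in $L^1(0,T)$: the difference $D$ of two solutions satisfies $D = K*D$, hence $\|D_\sigma\|_{L^1}\leq \|K_\sigma\|_{L^1}\|D_\sigma\|_{L^1}$, which forces $D=0$. The same uniqueness is also the content of \cite[Theorem~2.3.1]{gripenberg1990volterra}. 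Therefore $Q=\widetilde Q\geq0$ on $[0,T]$, and since $T$ is arbitrary, $R=-Q\leq 0$ a.e. on $\R_+$.

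The only delicate point is the bookkeeping of signs when passing from the resolvent identity for $-K$ to the equation for $Q$. Once that is right, the argument is routine. Monotonicity of $K$ is not actually needed; nonnegativity and local integrability suffice. I expect the main obstacle to be justifying convergence of the Neumann series, which the exponential-weight trick handles.
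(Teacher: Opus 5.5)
Your proof is correct, but it follows a different route from the paper. The paper does not construct the resolvent. It uses local integrability and monotonicity of $K$ to split any interval $[T_1,T_2]$ into finitely many pieces on which $\|K\|_{L^1}<1$. It then cites \cite[Proposition~9.8.1]{gripenberg1990volterra}, a sign result for resolvents of nonpositive kernels, for which that subdivision is the hypothesis being checked.

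You instead pass to $Q=-R$, which solves $Q=K+K*Q$, build $\sum_{n\geq1}K^{*n}$ with the exponential-weight trick, and identify it with $Q$ by uniqueness. The sign bookkeeping and each analytic step are sound: $(f*g)_\sigma=f_\sigma*g_\sigma$, Young's inequality on $[0,T]$, the choice of $\sigma$ with $\|K_\sigma\|_{L^1(0,T)}<1$, and the contraction argument for uniqueness.

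What your approach buys:
\begin{itemize}
\item a self-contained proof with the explicit formula $R=-\sum_{n\geq1}K^{*n}$ on every $[0,T]$;
\item a transparent view that only $K\geq0$ and $K\in L^1_{loc}(\R_+)$ matter.
\end{itemize}
Your remark that monotonicity is superfluous is right. Even the paper's subdivision step holds for any locally integrable kernel, by absolute continuity of the integral.

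The paper's approach buys brevity. It also stays within the reference whose Gr\"onwall lemma \cite[Lemma~9.8.2]{gripenberg1990volterra} it uses right afterwards in Lemma~\ref{lemma:estimates_bar_V_for_existence}.
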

\begin{proof}
    Since $K$ is locally integrable and non-increasing, any interval of the form $[T_1\,,T_2]\,$, with $0 \leq T_1 \leq T_2\,$, can be divided into finitely many subintervals $[t_{i}\,, t_{i+1}]$ such that $\|K\|_{L^1([t_i, t_{i+1}])} < 1\,$. Since $-K$ is nonpositive, we can apply \cite[Proposition~9.8.1]{gripenberg1990volterra}, proving that its resolvent is nonpositive.
\end{proof}

We conclude this section by proving two different types of scalings of convolution kernels leading to Dirac limits.

\begin{proof}[Proof of Lemma \ref{lemma:weak_cv_large_time_L1}]
    Since $K$ is integrable, its Laplace transform
    $$
    \hat K(\lambda) := \int_0^{\infty}\,K(s)\,e^{-\lambda s}\,ds\,, \quad \lambda \geq 0\,,
    $$
    is well-defined and continuous. Defining $K^n(t) := n K(nt)\,$, we easily compute 
    $$
    \hat K^n (\lambda) = \hat K(\lambda / n) \overset{n \to + \infty}{\longrightarrow} \hat K(0) = \|K\|_{L_1(\R_+)}\,, \quad \lambda \geq 0\,,
    $$
    which is the Laplace transform of the measure $\|K\|_{L^1(\R_+)}\,\delta_0(dt)\,$. From \cite[Theorem~XIII.1.2a]{feller1991introduction} we conclude that the convergence of the Laplace transforms leads to weak convergence on any $[0\,,T]$ with $T > 0\,$.
\end{proof}

\begin{proof}[Proof of Lemma \ref{lemma:weak_cv_resolvent_mean_rev}]
    Since $K$ is completely monotone, it is non-increasing, using the fact that it is locally integrable, we have 
    $$
    \widehat K(\lambda) := \int_0^{+ \infty}\,K(s)\,e^{-\lambda s}\,ds \leq \int_0^1\,K(s)\,ds + K(1)\,\int_1^{+\infty}\,e^{-\lambda s}\,ds < + \infty\,,
    $$
    for any $\lambda > 0\,$. This is also true for $R^n$ for any $n > 0\,$, since it is integrable on $\R_+$ thanks to Proposition \ref{prop:nonnegative_resolvent_cm}. Moreover, from the definition of $R^{n}\,$, we have the equality 
    $$
    \widehat R^{n}(\lambda) = \frac{ n\,\widehat K(\lambda)}{1 + n \, \widehat K(\lambda)}\,, \quad \lambda > 0\,,
    $$
    between their respective Laplace transforms. Hence, 
    $$
    \widehat R^{n}(\lambda) \overset{n \to + \infty}{\longrightarrow} 1\,, \quad \lambda > 0\,,
    $$
    which is the Laplace transform of the measure $ \delta_0(dt)\,$. From \cite[Theorem~XIII.1.2a]{feller1991introduction}, we conclude that the convergence of the Laplace transforms leads to a weak convergence on any $[0\,,T]$ with $T > 0\,$.
\end{proof}

\bibliographystyle{alpha}
\bibliography{ref}

@article {Meerschaert_2014,
    AUTHOR = {Meerschaert, Mark M. and Straka, Peter},
     TITLE = {Semi-{M}arkov approach to continuous time random walk limit
              processes},
   JOURNAL = {The Annals of Probability},
  FJOURNAL = {The Annals of Probability},
    VOLUME = {42},
      YEAR = {2014},
    NUMBER = {4},
     PAGES = {1699--1723},
}

@article {Kim_Song_Vondracek,
    AUTHOR = {Kim, Panki and Song, Renming and Vondra\v{c}ek, Zoran},
     TITLE = {On the potential theory of one-dimensional subordinate
              {B}rownian motions with continuous components},
   JOURNAL = {Potential Analysis},
  FJOURNAL = {Potential Analysis. An International Journal Devoted to the
              Interactions between Potential Theory, Probability Theory,
              Geometry and Functional Analysis},
    VOLUME = {33},
      YEAR = {2010},
    NUMBER = {2},
     PAGES = {153--173},
}

@book {Kyprianou,
    AUTHOR = {Kyprianou, Andreas E.},
     TITLE = {Fluctuations of {L}\'evy processes with applications},
    SERIES = {Universitext},
   EDITION = {Second},
      NOTE = {Introductory lectures},
 PUBLISHER = {Springer, Heidelberg},
      YEAR = {2014},
     PAGES = {xviii+455},
}

@article{CKM,
  title={Superdiffusive limits beyond the {M}arcus regime for deterministic fast-slow systems},
  author={Chevyrev, Ilya and Korepanov, Alexey and Melbourne, Ian},
  journal={Communications of the American Mathematical Society},
  volume={4},
  number={16},
  pages={746--786},
  year={2024}
}

@article{FFT,
	author    = {Freitas, Ana Cristina Moreira and Freitas, Jorge Milhazes and Todd, Mike},
	title     = {Enriched functional limit theorems for dynamical systems},
	journal   = {Annali della Scuola Normale Superiore di Pisa - Classe di Scienze},
	volume    = {59},
	year      = {2025},
	doi       = {10.2422/2036-2145.202401_016},
}

@article{FFMT,
	author    = {Freitas, Ana Cristina Moreira and Freitas, Jorge Milhazes and Melbourne, Ian and Todd, Mike},
	title     = {Convergence to decorated {L}\'evy processes in non-{S}korohod topologies for dynamical systems},
	journal   = {Electronic Journal of Probability},
	volume    = {29},
	year      = {2024},
	pages     = {1-24},
	doi       = {10.1214/24-EJP1231},
}

@article{jaber2024reconciling,
  title={Reconciling rough volatility with jumps},
  author={Abi Jaber, Eduardo and De Carvalho, Nathan},
  journal={SIAM Journal on Financial Mathematics},
  volume={15},
  number={3},
  pages={785--823},
  year={2024},
  publisher={SIAM}
}

@article{abi2025hyper,
  title={From Hyper Roughness to Jumps as {$H \to -1/2$}},
  author={Abi Jaber, Eduardo and Attal, Elie and Rosenbaum, Mathieu},
  year={2025},
  journal={Annals of Applied Probability},
  note={To appear, arXiv:2503.16985}
}

@article{abi2019affine,
  title={Affine {V}olterra processes},
  author={Abi Jaber, Eduardo and Larsson, Martin and Pulido, Sergio},
  year={2019},
  journal={Annals of Applied Probability},
  volume={29},
  number = {5},
  pages = {3155-3200}
}

@article{jusselin2020no,
  title={No-arbitrage implies power-law market impact and rough volatility},
  author={Jusselin, Paul and Rosenbaum, Mathieu},
  journal={Mathematical Finance},
  volume={30},
  number={4},
  pages={1309--1336},
  year={2020},
  publisher={Wiley Online Library}
}

@article{abi2021weak,
  title={Weak existence and uniqueness for affine stochastic {V}olterra equations with {$L^1$}-kernels},
  author={Abi Jaber, Eduardo},
  year={2021},
  journal={Bernoulli},
  volume={27},
  number = {3},
  pages = {1583-1615}
}

@book{jacod2013limit,
  title={Limit theorems for stochastic processes},
  author={Jacod, Jean and Shiryaev, Albert N.},
  series={Grundlehren der
mathematischen Wissenschaften},
  volume={288},
  year={2013},
  edition={2nd},
  publisher={Springer}
}

@article{jakubowski1997non,
  title={A non-{S}korohod topology on the {S}korohod space},
  author={Jakubowski, Adam},
  year={1997},
  journal={Electronic Journal of Probability},
  volume={2},
  number={4},
  pages={1--21}
}

@article{sojmark2023weak,
  title={Weak Convergence of Stochastic Integrals on {S}korokhod Space in {S}korokhod's {$J_1$} and {$M_1$} Topologies},
  author={S{\o}jmark, Andreas and Wunderlich, Fabrice},
  journal={Probability Theory and Related Fields},
  note = {https://doi.org/10.1007/s00440-026-01476-y},
  year={2026}
}

@book{whitt2002stochastic,
  author={Whitt, Ward},
  title={Stochastic-process limits: an introduction to stochastic-process limits and their application to queues},
  series={Springer Series in Operations Research},
  publisher={Springer},
  year={2002}
}

@article{abi2019markovian,
  title={Markovian structure of the {V}olterra {H}eston model},
  author={Abi Jaber, Eduardo and El Euch, Omar},
  journal={Statistics \& Probability Letters},
  volume={149},
  pages={63--72},
  year={2019},
  publisher={Elsevier}
}

@book{feller1991introduction,
  title={An introduction to probability theory and its applications, Volume 2},
  author={Feller, William},
  volume={2},
  year={1991},
  publisher={John Wiley \& Sons}
}

@book{revuzYor1999,
  author    = {Revuz, Daniel and Yor, Marc},
  title     = {Continuous Martingales and {B}rownian Motion},
  series    = {Grundlehren der mathematischen {W}issenschaften},
  volume    = {293},
  edition   = {3},
  publisher = {Springer},
  address   = {Berlin, Heidelberg},
  year      = {1999},
  doi       = {10.1007/978-3-662-06400-9}
}

@article{dawson_super-brownian_1994,
	title = {A super-{Brownian} motion with a single point catalyst},
	volume = {49},
	issn = {0304-4149},
	url = {https://www.sciencedirect.com/science/article/pii/0304414994901104},
	doi = {10.1016/0304-4149(94)90110-4},
	number = {1},
	urldate = {2026-02-23},
	journal = {Stochastic Processes and their Applications},
	author = {Dawson, Donald A. and Fleischmann, Klaus},
	month = jan,
	year = {1994},
	pages = {3--40},
}

@article{zahle_space-time_2005,
	title = {Space-time regularity of catalytic super-{Brownian} motion},
	volume = {278},
	issn = {1522-2616},
	url = {https://onlinelibrary.wiley.com/doi/10.1002/mana.200310284},
	doi = {10.1002/mana.200310284},
	language = {en},
	number = {7-8},
	urldate = {2026-02-23},
	journal = {Mathematische Nachrichten},
	author = {Z\"ahle, Henryk},
	month = jun,
	year = {2005},
	note = {Publisher: John Wiley \& Sons, Ltd},
	pages = {942--970},
}

@article{klenke_absolute_2000,
	title = {Absolute continuity of catalytic measure-valued branching processes},
	volume = {89},
	copyright = {https://www.elsevier.com/tdm/userlicense/1.0/},
	issn = {03044149},
	url = {https://linkinghub.elsevier.com/retrieve/pii/S0304414900000223},
	doi = {10.1016/S0304-4149(00)00022-3},
	language = {en},
	number = {2},
	urldate = {2026-02-24},
	journal = {Stochastic Processes and their Applications},
	author = {Klenke, Achim},
	month = oct,
	year = {2000},
	pages = {227--237},
}

@article{dawson_critical_1991,
	title = {Critical branching in a highly fluctuating random medium},
	volume = {90},
	issn = {1432-2064},
	url = {https://doi.org/10.1007/BF01192164},
	doi = {10.1007/BF01192164},
	language = {en},
	number = {2},
	urldate = {2026-02-24},
	journal = {Probability Theory and Related Fields},
	author = {Dawson, Donald A. and Fleischmann, Klaus},
	month = jun,
	year = {1991},
	pages = {241--274},
}

@article{wang_fluctuation_2014,
	title = {Fluctuation limits of the single point catalytic super-stable processes},
	volume = {30},
	issn = {1439-7617},
	url = {https://doi.org/10.1007/s10114-013-2465-9},
	doi = {10.1007/s10114-013-2465-9},
	language = {en},
	number = {1},
	urldate = {2026-02-24},
	journal = {Acta Mathematica Sinica, English Series},
	author = {Wang, Li},
	year = {2014},
	pages = {23--34},
}

@article{mytnik_uniqueness_2015,
  title={Uniqueness for {V}olterra-type stochastic integral equations},
  author={Mytnik, Leonid and Salisbury, Thomas S.},
  journal={arXiv preprint arXiv:1502.05513},
  year={2015}
}

@article{fleischmann1995new,
  title={A new approach to the single point catalytic super-{B}rownian motion},
  author={Fleischmann, Klaus and Le Gall, Jean-Fran{\c{c}}ois},
  journal={Probability theory and related fields},
  volume={102},
  number={1},
  pages={63--82},
  year={1995},
  publisher={Springer}
}

@article{dawson1995singularity,
  title={Singularity of super-{B}rownian local time at a point catalyst},
  author={Dawson, Donald A. and Fleischmann, Klaus and Li, Yi and Mueller, Carl},
  journal={The Annals of Probability},
  volume={23},
  number={1},
  pages={37--55},
  year={1995},
  publisher={JSTOR}
}

@article{el2019characteristic,
  title={The characteristic function of rough {H}eston models},
  author={El Euch, Omar and Rosenbaum, Mathieu},
  journal={Mathematical Finance},
  volume={29},
  number={1},
  pages={3--38},
  year={2019},
  publisher={Wiley Online Library}
}

@article{jaisson2016rough,
  title={Rough fractional diffusions as scaling limits of nearly unstable heavy tailed {H}awkes processes},
  author={Jaisson, Thibault and Rosenbaum, Mathieu},
  year={2016},
  journal={Annals of Applied Probability},
  volume={26},
  number={5},
  pages={2860-2882}
}

@book{gripenberg1990volterra,
  title={Volterra integral and functional equations},
  author={Gripenberg, Gustaf and Londen, Stig-Olof and Staffans, Olof},
  year={1990},
  publisher={Cambridge University Press}
}

@article{bachelierTheorieSpeculation1900,
  title = {{Th\'eorie de la sp\'eculation}},
  author = {Bachelier, Louis},
  year = 1900,
  journal = {Annales scientifiques de l'\'Ecole normale sup\'erieure},
  volume = {17},
  pages = {21--86},
  issn = {0012-9593, 1873-2151},
  doi = {10.24033/asens.476},
  urldate = {2026-03-25},
  langid = {french}
}

@article{schrodingerZurTheorieFall1915,
  title = {Zur {{Theorie}} Der {{Fall-}} Und {{Steigversuche}} an {{Teilchen}} Mit {{Brownscher Bewegung}}},
  author = {Schr{\"o}dinger, E.},
  year = 1915,
  journal = {Physikalische Zeitschrift},
  volume = {16},
  pages = {289--295}
}

@incollection{breimanFirstExitTimes1967a,
  title = {First Exit Times from a Square Root Boundary},
  booktitle = {Proceedings of the {{Fifth Berkeley Symposium}} on {{Mathematical Statistics}} and {{Probability}}, {{Volume}} 2: {{Contributions}} to {{Probability Theory}}, {{Part}} 2},
  author = {Breiman, Leo},
  year = 1967,
  month = jan,
  volume = {5.2B},
  pages = {9--17},
  publisher = {University of California Press},
  urldate = {2026-03-25},
  langid = {english}
}

@incollection{feller1951diffusion,
  title     = {Diffusion processes in genetics},
  author    = {Feller, William},
  booktitle = {Proceedings of the Second {Berkeley} Symposium on Mathematical Statistics and Probability},
  pages     = {227--247},
  year      = {1951},
  publisher = {University of California Press},
  address   = {Berkeley},
}

@article{sheppFirstPassageProblem1967a,
  title = {A first passage problem for the {W}iener process},
  author = {Shepp, L. A.},
  year = 1967,
  journal = {The Annals of Mathematical Statistics},
  volume = {38},
  number = {6},
  pages = {1912--1914},
  publisher = {Institute of Mathematical Statistics},
  issn = {0003-4851, 2168-8990},
  doi = {10.1214/aoms/1177698626},
  urldate = {2026-03-25},
  langid = {english}
}

@article{novikovStoppingTimesWiener1971,
  title = {On Stopping Times for a {W}iener Process},
  author = {Novikov, A. A.},
  year = {1971},
  journal = {Theory of Probability \& Its Applications},
  volume = {16},
  number = {3},
  pages = {449--456},
  issn = {0040-585X, 1095-7219},
  doi = {10.1137/1116049},
  urldate = {2026-02-23},
  langid = {english}
}

@article{fortetFonctionsAleatoiresType1943,
  title={Les fonctions al{\'e}atoires du type de {M}arkoff associ{\'e}es {\`a} certaines {\'e}quations lin{\'e}aires aux d{\'e}riv{\'e}es partielles du type parabolique},
  author={Fortet, Robert},
  journal={Journal de Math{\'e}matiques Pures et Appliqu{\'e}es},
  volume={22},
  pages={177--243},
  year={1943}
}

@article{durbinFirstPassageDensityContinuous1985a,
  title = {The First-Passage Density of a Continuous {G}aussian Process to a General Boundary},
  author = {Durbin, J.},
  year = 1985,
  journal = {Journal of Applied Probability},
  volume = {22},
  number = {1},
  eprint = {3213751},
  eprinttype = {jstor},
  pages = {99--122},
  publisher = {Applied Probability Trust},
  issn = {0021-9002},
  doi = {10.2307/3213751},
  urldate = {2026-03-25}
}

@article{durbinFirstPassageDensityBrownian1992a,
  title = {The First-Passage Density of the {B}rownian Motion Process to a Curved Boundary},
  author = {Durbin, J. and Williams, D.},
  year = 1992,
  journal = {Journal of Applied Probability},
  volume = {29},
  number = {2},
  eprint = {3214567},
  eprinttype = {jstor},
  pages = {291--304},
  publisher = {Applied Probability Trust},
  issn = {0021-9002},
  doi = {10.2307/3214567},
  urldate = {2026-03-25}
}

@article{peskirIntegralEquationsArising2002,
  title = {On integral equations arising in the first-passage problem for {B}rownian motion},
  author = {Peskir, Goran},
  year = 2002,
  journal = {Journal of Integral Equations and Applications},
  volume = {14},
  number = {4},
  pages={397-423},
  issn = {0897-3962},
  doi = {10.1216/jiea/1181074930},
  urldate = {2026-03-24}
}

@incollection{strassenAlmostSureBehavior1967a,
  title = {Almost Sure Behavior of Sums of Independent Random Variables and Martingales},
  booktitle = {Proceedings of the {{Fifth Berkeley Symposium}} on {{Mathematical Statistics}} and {{Probability}}, {{Volume}} 2: {{Contributions}} to {{Probability Theory}}, {{Part}} 1},
  author = {Strassen, Volker},
  year = 1967,
  month = jan,
  volume = {5.2A},
  pages = {315--344},
  publisher = {University of California Press},
  urldate = {2026-03-25},
  langid = {english}
}

@article{danielsMaximumSizeClosed1974a,
  title = {The Maximum Size of a Closed Epidemic},
  author = {Daniels, H. E.},
  year = 1974,
  journal = {Advances in Applied Probability},
  volume = {6},
  number = {4},
  eprint = {1426182},
  eprinttype = {jstor},
  pages = {607--621},
  publisher = {Applied Probability Trust},
  issn = {0001-8678},
  doi = {10.2307/1426182},
  urldate = {2026-03-25}
}

@article{ferebeeTangentApproximationOnesided1982a,
  title = {The Tangent Approximation to One-Sided {{Brownian}} Exit Densities},
  author = {Ferebee, Brooks},
  year = 1982,
  month = sep,
  journal = {Zeitschrift f\"ur Wahrscheinlichkeitstheorie und Verwandte Gebiete},
  volume = {61},
  number = {3},
  pages = {309--326},
  issn = {1432-2064},
  doi = {10.1007/BF00539832},
  urldate = {2026-03-25},
  langid = {english}
}

@book{lercheBoundaryCrossingBrownian1986,
  author    = {Lerche, Hans Rudolf},
  title     = {Boundary crossing of {B}rownian motion},
  subtitle  = {Its Relation to the Law of the Iterated Logarithm and to Sequential Analysis},
  series    = {Lecture Notes in Statistics},
  volume    = {40},
  publisher = {Springer},
  address   = {New York},
  year      = {1986},
  doi       = {10.1007/978-1-4615-6569-7}
}

@article{mechkovFastReversionLimitHeston2014,
  title={Fast-reversion limit of the {H}eston model},
  author={Mechkov, Serguei},
  journal={Available at SSRN 2418631},
  year={2015}
}

@article{bondiLevyProcessesWeak2025,
  title = {L\'evy Processes as Weak Limits of Rough {{Heston}} Models},
  author = {Bondi, Alessandro and Forde, Martin},
  year = 2025,
  journal = {arXiv preprint arXiv:2508.14835},
}

@article{fordeLARGEMATURITYSMILEHESTON,
  title = {The large-maturity smile for the {H}eston model},
  author = {Forde, Martin and Jacquier, Antoine},
  journal={Finance and Stochastics},
  volume={15},
  number={4},
  pages = {755-780},
  year = {2011}
}

@article{jaber2025simulating,
  title={Simulating integrated {V}olterra square-root processes and {V}olterra {H}eston models via {I}nverse {G}aussian},
  author={Abi Jaber, Eduardo and Attal, Elie},
  journal={arXiv preprint arXiv:2504.19885},
  year={2025}
}

@phdthesis{McCrickerd2021,
  author       = {Ryan McCrickerd},
  title        = {On spatially irregular ordinary differential equations and a pathwise volatility modelling framework},
  school       = {Imperial College London},
  year         = {2021},
  type         = {Ph.D. thesis},
  doi          = {10.25560/92202},
  url          = {https://doi.org/10.25560/92202}
}

@article{berger1980volterra,
  title={Volterra equations with {I}t{\^o} integrals---{I}},
  author={Berger, Marc A. and Mizel, Victor J.},
  journal={The Journal of Integral Equations},
  volume={2},
  number={3},
  pages={187--245},
  year={1980},
  publisher={JSTOR}
}

@article{abi2021weaksolution,
  title={A weak solution theory for stochastic {V}olterra equations of convolution type},
  author={Abi Jaber, Eduardo and Cuchiero, Christa and Larsson, Martin and Pulido, Sergio},
  journal={The Annals of Applied Probability},
  volume={31},
  number={6},
  pages={2924--2952},
  year={2021},
  publisher={Institute of Mathematical Statistics}
}

@article{friesen2024volterra,
  title={Volterra square-root process: stationarity and regularity of the law},
  author={Friesen, Martin and Jin, Peng},
  journal={The Annals of Applied Probability},
  volume={34},
  number={1A},
  pages={318--356},
  year={2024},
  publisher={Institute of Mathematical Statistics}
}

\end{document}